\title{An ancient Ricci flow emerging from Taub-Bolt}
\author{John Hughes}
\newtheorem{lemma}{Lemma}[section]
\newtheorem{theorem}{Theorem}[section]
\newtheorem{proposition}{Proposition}[section]
\theoremstyle{definition}
\newtheorem{definition}[theorem]{Definition}
\newtheorem{remark}[theorem]{Remark}
\newtheorem*{question*}{Question}
\date{}
\address{Mathematical Institute, University of Oxford, Oxford OX2 6GG, United Kingdom}
\email{john.hughes@maths.ox.ac.uk}
\begin{document}
	
\maketitle 
	\begin{abstract}
This paper proves that there exists a non-trivial ancient solution to the Ricci flow emerging from the Taub-Bolt metric.
\end{abstract}

\section{Introduction}

If $(M,g_{0})$ is a Riemannian manifold, the Ricci flow is an equation for the evolution of $g_{0}$:
\begin{equation}\label{RFequation}
	\partial_{t}g(t)=-2Ric(g(t)), \text{ } g(0)=g_{0}.
\end{equation}
The fixed points of the Ricci flow are Ricci flat metrics. Thus it is natural to ask about the stability of such metrics. This paper will be concerned with the stability of the family of complete Ricci flat Taub-Bolt metrics \cite{Bolt} on the non-compact $\mathbb{CP}^2\setminus\{\text{pt}\}$. All members of the Taub-bolt family are isometric up to diffeomorphism, and so, due to the behaviour of the Ricci flow under the action of diffeomorphism and scaling, the stability, under the Ricci flow, of one member implies the stability of the other members. Thus, this paper will choose a member and say that it is `the' Taub-Bolt metric. 
The main result of this paper is the following dynamical instability result.
\begin{theorem}\label{mainthm}
	There exists a non-trivial ancient solution to the Ricci flow coming out of the Taub-Bolt metric.
\end{theorem}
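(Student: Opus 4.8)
\medskip
\noindent\textbf{Proof strategy.} The plan is to work throughout within the cohomogeneity-one, $U(2)$-invariant class of metrics on the disc bundle over $S^{2}$ underlying $\mathbb{CP}^{2}\setminus\{\mathrm{pt}\}$ --- a class that contains the Taub-Bolt metric $g_{TB}$ and is preserved by the Ricci flow --- in which the flow reduces, after a DeTurck gauge fixing, to a strictly parabolic system for finitely many functions of one spatial variable (the orbit coordinate) on the half-line $[0,\infty)$, with a regular singular point at the bolt $\{r=0\}$ and an ALF end as $r\to\infty$. Within this class I would reduce Theorem~\ref{mainthm} to two ingredients. First, \emph{linear instability} of $g_{TB}$: a positive eigenvalue $\lambda>0$ of the linearised Ricci--DeTurck operator $\Delta_{L}$ at $g_{TB}$ --- here $\Delta_{L}h=\Delta h+2\,\mathrm{Rm}(h)$ with $\mathrm{Rm}(h)$ the natural action of the curvature of the Ricci-flat $g_{TB}$ on symmetric $2$-tensors, normalised so that the linearised Ricci--DeTurck flow reads $\partial_{t}h=\Delta_{L}h$ --- together with a smooth transverse-traceless eigenmode $h$, $\Delta_{L}h=\lambda h$, decaying at the bolt and at the ALF end. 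Second, a \emph{nonlinear construction} promoting the formal ancient solution $g_{TB}+\epsilon e^{\lambda t}h$ to a genuine one on an interval $(-\infty,T]$. Given both, Theorem~\ref{mainthm} follows after undoing the DeTurck gauge by a family of diffeomorphisms converging as $t\to-\infty$.

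\medskip
\noindent\textbf{Linear instability.} Restricting $\Delta_{L}$ to the symmetric class turns its eigenvalue problem into a Sturm--Liouville-type system of ODEs on $[0,\infty)$. Its principal eigenvalue should be positive --- this is the rigorous incarnation of the well-known negative mode of the Euclidean Taub-Bolt instanton familiar from gravitational thermodynamics. I would extract the sign of $\lambda$ from an explicit Rayleigh-quotient estimate with a compactly supported test tensor rather than numerically, verify that the resulting eigenmode $h$ is transverse-traceless with respect to $g_{TB}$ (projecting off any pure-trace or Lie-derivative part if needed), and check that it decays fast enough at the end to lie in $L^{2}$. Since the essential spectrum of $\Delta_{L}$ on an ALF end is contained in $(-\infty,0]$, positivity of $\lambda$ then automatically exhibits it as an isolated eigenvalue of finite multiplicity lying above the essential spectrum.

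\medskip
\noindent\textbf{Nonlinear ancient solution and removal of the gauge.} With $g_{TB}$ as DeTurck background, the Ricci--DeTurck flow $\partial_{t}g=-2\,\mathrm{Ric}(g)+\mathcal{L}_{V(g,g_{TB})}g$ is strictly parabolic near $g_{TB}$ with linearisation $\Delta_{L}$, which on a suitable weighted H\"older (or weighted $L^{2}$-Sobolev) scale generates an analytic semigroup whose spectrum consists of the isolated eigenvalue $\lambda$, a finite-dimensional kernel coming from any genuine zero modes of $\Delta_{L}$, and a remainder with real part $\le 0$ bounded away from $\lambda$. Writing $g(t)=g_{TB}+u(t)$ and $u(t)=a(t)h+v(t)$ with $v$ in the complement of $\mathbb{R}h$, the flow becomes a system $a'=\lambda a+Q_{1}(u)$, $\partial_{t}v=\mathcal{A}v+Q_{2}(u)$ with $Q_{1},Q_{2}$ at least quadratic and $\mathrm{spec}(\mathcal{A})\subset\{\mathrm{Re}\le 0\}$. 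I would solve this on $(-\infty,T]$ by a contraction mapping in the Banach space with norm $\sup_{t\le T}e^{-(\lambda-\delta)t}\|u(t)\|$ for a small $\delta>0$: propagate the unstable amplitude by $a(t)=\epsilon e^{\lambda t}+(\text{Duhamel integral from }-\infty)$, recover the stable part from Duhamel's formula integrated from $-\infty$, and dispose of any zero modes by a Lyapunov--Schmidt projection; the exponential weight renders the quadratic terms contractive once $|\epsilon|$ and $T$ are small. This produces a non-trivial ancient Ricci--DeTurck flow with $\|u(t)\|\to0$ as $t\to-\infty$. Finally, since $V(g(t),g_{TB})$ inherits the decay of $u(t)$, the diffeomorphisms $\phi_{t}$ with $\partial_{t}\phi_{t}=-V\circ\phi_{t}$ converge as $t\to-\infty$, so $\phi_{t}^{*}g(t)$ is a genuine ancient Ricci flow; it is non-trivial because its transverse-traceless part is $a(t)h\ne0$ (so it is not isometric to $g_{TB}$ for $t$ close to $T$), and it converges to a fixed isometric copy of $g_{TB}$ as $t\to-\infty$.

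\medskip
\noindent\textbf{Main obstacle.} I expect the principal difficulty to be the non-compactness --- the behaviour as one runs off along the ALF end. One must choose weighted function spaces there in which (i) $\Delta_{L}$ has exactly the spectral picture used above, so that the unstable eigenmode is isolated rather than embedded in, or manufactured by, essential spectrum and is genuinely square-integrable, and (ii) the DeTurck nonlinearity together with the semigroup (maximal regularity) estimates close with the \emph{same} weights; reconciling these two requirements, and correctly handling any zero modes of $\Delta_{L}$ on the Ricci-flat background so that ``stable complement'' really means ``non-positive with a spectral gap below $\lambda$'', is the crux of the argument. A secondary but non-trivial point is making the positivity of $\lambda$ fully rigorous, since the analogous statement in the physics literature is usually established only numerically.
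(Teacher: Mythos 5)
Your proposal is a legitimate alternative architecture for the theorem, and it diverges from the paper at both of its main stages. For the linear step, you obtain the unstable eigenvalue spectrally: a negative Rayleigh quotient from an explicit test tensor plus the claim that the essential spectrum of $\Delta^{L}$ on the ALF end lies in $(-\infty,0]$, so that the positive part of the spectrum consists of isolated eigenvalues. The paper instead never invokes the essential spectrum: it minimises the functional $\mathbf{a}(h)=\int |\nabla h|^{2}-2R^{\mu\alpha\beta\nu}h_{\alpha\beta}h_{\mu\nu}$ directly over the unit $L^{2}$-sphere in a weighted Sobolev space $\mathbb{H}^{1}$, recovering compactness from Rellich--Kondrachov on exhausting compact sets together with the $O(r^{-3})$ decay of the curvature, and then upgrades the minimiser to a smooth eigenmode with \emph{exponential} pointwise decay via a frequency-function argument. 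That exponential decay is not a luxury in the paper: it is used repeatedly in the nonlinear analysis (equivalence of the metrics $g_{0}$ and $g_{0}+\epsilon h$, finiteness of the weighted norms of $w$, the decay estimates of Lemma 4.2), so in your scheme you would need a comparable quantitative decay statement for $h$, not merely membership in $L^{2}$. For the nonlinear step you propose a Lyapunov--Perron fixed point: Duhamel integration from $-\infty$ in a space weighted by $e^{-(\lambda-\delta)t}$, with a spectral splitting into the $\lambda$-mode, possible further non-negative modes handled by Lyapunov--Schmidt, and a strictly stable remainder. The paper instead runs the flow forward from initial data $g_{0}+\epsilon_{n}h$ at times $t_{n}\to-\infty$ with $\epsilon_{n}=e^{\lambda t_{n}}$, proves a uniform bootstrap estimate $\|\partial_{t}^{j}\overline{\nabla}^{i}w^{n}\|_{2}\le M_{i,j}e^{\lambda t}$ for $i+j\le 7$ on successive time intervals of length $T$ (using only that $\lambda$ is the top of the Rayleigh quotient, via Gronwall, rather than a full spectral decomposition), converts this to $C^{4}$ control through the four-dimensional Sobolev embedding $W^{m+3,2}\hookrightarrow C^{m}$, and extracts the ancient solution by Arzel\`a--Ascoli. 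Your route, if the weighted semigroup and maximal-regularity framework on this non-compact ALF space is set up correctly, is more conceptual and would in principle parametrise the ancient solutions; its cost is precisely the functional-analytic infrastructure you yourself flag as the crux (locating the essential spectrum, establishing the gap below $\lambda$, and closing the quadratic estimates in the same weighted spaces). The paper's route avoids all spectral machinery beyond the variational characterisation of $\lambda$, at the price of the lengthy derivative bookkeeping in Sections 4.2--4.3. Both non-triviality arguments ultimately rest on the same observation, namely that the solution equals $g_{0}+e^{\lambda t}h+O(e^{2\lambda t})$.
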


More precisely, we will show that there is a non-trivial ancient solution to the Ricci flow such that for all $k\geq 0$, we have $g(t)\rightarrow g_{\mathrm{Bolt}}$ (modulo diffeomorphisms) in $C^k$ as $t\rightarrow -\infty$. Thus, Theorem \ref{mainthm} is an improvement of the $L^2$-instability result for Taub-Bolt \cite{hughes2024l2instabilitytaubboltmetricricci} established previously  proven by the author.

On compact manifolds the stability of Ricci flat metrics has been studied and is well understood.
 Building upon the work of Sesum \cite{Ses1},  Haslhofer \cite{Has1} shows that dynamical stability and linear stability of integrable Ricci flat metrics are equivalent. The latter will be determined by the non-positivity of the Lichnernowicz Laplacian associated with the metric.

\begin{definition}\label{LichLap}
	Let $(M,g)$ be a Riemannian manifold. The operator
	\begin{equation*}
		\Delta^{L}_{g}:C^{\infty}(\mathrm{Sym}^2(T^{*}M)) \rightarrow C^{\infty}(\mathrm{Sym}^2(T^{*}M)),
	\end{equation*}
	defined by 
	\begin{equation*}
		\Delta_{g}^{L}h_{ij}=\Delta_{g}h_{ij}+2g^{kp}g^{lq}R_{iklj}h_{pq},
	\end{equation*}
	is called the Lichnernowicz Laplacian, where $R_{iklj}$ are the components of the Riemann curvature tensor.
\end{definition}
\begin{definition}[\cite{Ses1} Definition 2]\label{leqmeaning}
	A Ricci flat metric $g$ on a manifold $M$ is linearly stable if $\Delta_{g}^{L} \leq 0$. That is to say $\bigintsss_{M} \langle \Delta_{g}^{L}h, h \rangle dV_{g} \leq 0$, for $h$ such that the integral is well defined. We say that $g$ is linearly unstable if it is not linearly stable, and write $\Delta_{g}^{L} \not\leq 0$.
\end{definition}

 The following two theorems relate linear stability to dynamical stability of Ricci flat metrics on compact manifolds.
The following theorem has been proven by Sesum \cite{Ses1} and Haslhofer \cite{Has1}.
\begin{theorem} 
	Let $g$ be a Ricci flat metric on a closed manifold $M$ and $k\geq 3$. If $g$ is linearly stable and integrable then for every $C^{k,\alpha}$-neighbourhood $U$ there exists a $C^{k+2,\alpha}$-neighbourhood $V \subset U$ such that Ricci flow starting at any metric in $V$ exists for all time and converges (modulo diffeomorphisms)  to a Ricci flat metric in $U$.
\end{theorem}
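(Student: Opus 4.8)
\section*{Proof proposal}

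The strategy I would follow is the one built on Perelman's $\lambda$-functional together with a \L ojasiewicz--Simon convergence argument. Recall that on a closed manifold $\lambda(g)=\inf\{\int_M(R_g+|\nabla f|^2)e^{-f}\,dV_g:\int_M e^{-f}\,dV_g=1\}$ is attained by a unique smooth minimiser $f_g$ solving $-2\Delta_g f_g+|\nabla f_g|^2-R_g=-\lambda(g)$; that $\lambda$ is non-decreasing along the Ricci flow, with $\frac{d}{dt}\lambda(g(t))=2\int_M|\mathrm{Ric}(g)+\nabla^2 f_g|^2e^{-f_g}\,dV_g$; and that its critical points are exactly the steady gradient Ricci solitons, which on a closed manifold are Ricci-flat. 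Since $g$ is Ricci-flat we therefore have $\lambda(g)=0$, $g$ is a critical point, and the tensor $\mathrm{Ric}(g)+\nabla^2 f_g$ is, up to a positive multiple, minus the $L^2(e^{-f_g}dV_g)$-gradient of $\lambda$ and, up to a diffeomorphism, the Ricci flow vector field.

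Next I would compute the second variation of $\lambda$ at $g$. Decomposing a variation $h$ into pure-trace, Lie-derivative (``gauge'') and transverse-traceless parts, the gauge and scaling directions lie in the kernel of $\lambda''_g$, while on transverse-traceless tensors one has $\lambda''_g(h,h)=\tfrac12\int_M\langle\Delta^L_g h,h\rangle\,dV_g$ up to the sign convention of Definition~\ref{LichLap}. Hence the hypothesis $\Delta^L_g\le 0$ of Definition~\ref{leqmeaning} says precisely that $\lambda''_g\le 0$, i.e. that $g$ is a (possibly degenerate) local maximiser of $\lambda$ modulo diffeomorphism, and in particular $\lambda\le 0$ on a neighbourhood of $g$.

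The analytic core is a \L ojasiewicz--Simon gradient inequality for $\lambda$: there exist $\theta\in(0,\tfrac12]$, $C>0$ and a $C^{k,\alpha}$-neighbourhood of $g$ on which $|\lambda(g')|^{1-\theta}\le C\,\|\mathrm{Ric}(g')+\nabla^2 f_{g'}\|_{L^2(g')}$. To obtain it one shows that $g'\mapsto f_{g'}$, and hence $\lambda$, is a real-analytic map of suitable Banach spaces --- apply the analytic implicit function theorem to the minimiser equation together with the normalisation, using invertibility of the linearised operator $-2\Delta_{g'}+(\text{lower order})$ on the relevant subspace --- so that the gradient of $\lambda$ has the form ``elliptic operator $+$ analytic lower-order terms'' with Fredholm linearisation at $g$; the abstract \L ojasiewicz--Simon theorem (in the form due to L.\ Simon, generalised by Chill) then applies and, crucially, survives the non-triviality of $\ker\lambda''_g$, so that no integrability hypothesis on the Ricci-flat deformations is required. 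The diffeomorphism degeneracy of $\lambda$ is handled either by restricting to a slice (Ebin) or, equivalently, by passing to the gauge-fixed flow used below.

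Granting these ingredients the convergence argument is routine. Run the gauge-fixed flow $\partial_t g=-2(\mathrm{Ric}(g)+\nabla^2 f_g)$, which coincides with the Ricci flow up to a time-dependent diffeomorphism; setting $\Lambda(t):=-\lambda(g(t))\ge 0$, monotonicity gives $\Lambda'(t)=-2\|\mathrm{Ric}(g(t))+\nabla^2 f_{g(t)}\|_{L^2}^2\le 0$ (using that the weight $e^{-f}$ is uniformly bounded above and below near $g$), and combining this with the \L ojasiewicz inequality yields
\[
-\frac{d}{dt}\Lambda(t)^{\theta}=-\theta\,\Lambda(t)^{\theta-1}\Lambda'(t)\ \ge\ \frac{2\theta}{C}\,\|\mathrm{Ric}(g(t))+\nabla^2 f_{g(t)}\|_{L^2},
\]
so that $\int_0^{\infty}\|\partial_t g\|_{L^2}\,dt\le\frac{C}{2\theta}\,\Lambda(0)^{\theta}<\infty$: the flow has finite total $L^2$-length. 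A short-time existence and interpolation bootstrap --- parabolic smoothing and Shi-type estimates trading $L^2$-smallness for $C^{k,\alpha}$-smallness --- then shows that for initial data in a small enough $V\subset U$ the flow exists for all time, stays in $U$, and converges in $C^{k,\alpha}$ to a limit $g_\infty\in U$ with $\mathrm{Ric}(g_\infty)+\nabla^2 f_{g_\infty}=0$, hence Ricci-flat; reinstating the gauge diffeomorphisms gives convergence of the genuine Ricci flow modulo diffeomorphism. I expect the \L ojasiewicz--Simon step to be the main obstacle --- concretely, establishing genuine real-analytic dependence of $f_{g'}$ and $\lambda$ on $g'$ in the correct function spaces, and reconciling the Hilbert-space setting natural to the energy monotonicity with the $C^{k,\alpha}$-statement, which is what forces the interpolation with parabolic regularity estimates.
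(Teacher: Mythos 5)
This theorem is not proved in the paper at all --- it is quoted verbatim from Haslhofer--M\"uller \cite{Has1} --- and your outline is exactly the strategy of their proof: Perelman's $\lambda$-functional and its monotonicity, the identification of linear stability with $\lambda''_g\le 0$ on transverse-traceless tensors, the \L ojasiewicz--Simon inequality for $\lambda$ (which is what removes Sesum's integrability hypothesis), and the finite-$L^2$-length convergence argument upgraded to $C^{k,\alpha}$ by parabolic smoothing. So the proposal is correct and follows essentially the same route as the cited source.
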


\begin{theorem} [\cite{Has1}, Theorem 1.2] \label{Has1}
	Let $g$ be a Ricci flat metric on a closed manifold $M$. If $g$ is linearly unstable, then there exists a non-trivial ancient solution to the Ricci flow with $g(t) \rightarrow g$ (modulo diffeomorphisms)  as $t \rightarrow -\infty$. 
\end{theorem}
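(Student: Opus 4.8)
\medskip
\noindent\textbf{Proof strategy.} The plan is to follow the template of Theorem \ref{Has1} of Haslhofer--M\"uller \cite{Has1}, adapted to the non-compact, asymptotically locally flat (ALF) geometry to which the Taub--Bolt metric $g_{TB}$ belongs. There are two steps: (i) show that $g_{TB}$ is linearly unstable, i.e. that there is a transverse--traceless symmetric $2$-tensor $h$ in a weighted space decaying fast enough on the ALF end for all integrations by parts to be legitimate, with $\int_{M}\langle \Delta^{L}_{g_{TB}}h,h\rangle\, dV_{g_{TB}}>0$; and (ii) upgrade this linear instability to a non-trivial ancient Ricci flow $g(t)$, $t\in(-\infty,0]$, with $g(t)\to g_{TB}$ modulo diffeomorphism as $t\to-\infty$. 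The feature that makes both steps tractable is that $g_{TB}$ is cohomogeneity one under its isometry group, so the relevant elliptic and parabolic problems reduce to ODEs and ODE systems in the radial variable.

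For step (i), I would restrict attention to the lowest, spherically invariant mode, on which the eigenvalue problem for $\Delta^{L}_{g_{TB}}$ becomes a scalar Sturm--Liouville problem on $[r_{0},\infty)$ (with $r_{0}$ the bolt radius), and exhibit an explicit decaying test tensor $h$ with positive Rayleigh quotient. The most natural candidates are obtained by differentiating an auxiliary one-parameter family of Ricci-flat metrics through $g_{TB}$, or by adapting the known ``negative mode'' of closely related Ricci-flat backgrounds in the physics literature (e.g. the Euclidean Schwarzschild instanton); alternatively one can run an oscillation/shooting argument directly for the radial ODE. The ALF end is non-degenerate --- the indicial roots of $\Delta^{L}_{g_{TB}}$ at infinity are explicit and there is no $L^{2}$ kernel beyond the expected geometric one --- so the essential spectrum of $\Delta^{L}_{g_{TB}}$ is non-positive, the supremum of the Rayleigh quotient is attained, and one obtains a genuine eigentensor $h_{1}$ with eigenvalue $\lambda_{1}>0$ lying in the appropriate weighted space.

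For step (ii), I would pass to the Ricci--DeTurck flow based at $g_{TB}$ to fix the gauge; its linearization at $g_{TB}$ is the linear flow $\partial_{t}h=\Delta^{L}_{g_{TB}}h$, for which step (i) supplies the exponentially growing solution $e^{\lambda_{1}t}h_{1}$. Working in weighted parabolic H\"older spaces adapted to the ALF end (or, equivalently, restricting the whole construction to the cohomogeneity-one class and working with the induced parabolic system in $r$), I would run the Haslhofer--M\"uller scheme: for each $n$, produce a solution $g_{n}(t)$ on $[-n,0]$ with $g_{n}(-n)=g_{TB}+\varepsilon_{n}h_{1}$, choosing $\varepsilon_{n}>0$ so that $\|g_{n}(0)-g_{TB}\|=\delta$ for a fixed small $\delta$; uniform interior smoothing estimates together with control of the ALF asymptotics along the $g_{n}$ (via barrier and maximum-principle arguments, which prevent loss of compactness at infinity) then yield a subsequential limit flow $g_{\infty}(t)$ on $(-\infty,0]$ staying in the $\delta$-ball about $g_{TB}$. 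Non-triviality is immediate from $\|g_{\infty}(0)-g_{TB}\|=\delta\neq 0$, and backward convergence $g_{\infty}(t)\to g_{TB}$ as $t\to-\infty$ follows either from a \L ojasiewicz--Simon inequality for the ALF analogue of Perelman's $\lambda$-functional near $g_{TB}$, or --- within the cohomogeneity-one class --- from the finite-dimensionality of the unstable manifold of the fixed point. Undoing the DeTurck diffeomorphisms produces the claimed ancient Ricci flow emerging from $g_{TB}$.

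The main obstacle I expect is the non-compactness of $\mathbb{CP}^{2}\setminus\{\mathrm{pt}\}$, which intrudes at every stage: one must work in weighted function spaces throughout and verify that the destabilizing tensor honestly lives there (so that the quadratic form is genuinely, not merely formally, positive) and that no spurious zero modes are contributed by the end; one must establish uniform a priori estimates and compactness for Ricci flow on the open manifold, in particular that the ALF asymptotics are preserved with uniform constants along the approximating flows; and one must prove the \L ojasiewicz--Simon inequality (or the substitute unstable-manifold statement) in this non-compact, weighted setting, which rests on the Fredholm and elliptic-regularity theory for $\Delta^{L}_{g_{TB}}$ on an ALF $4$-manifold. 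The cohomogeneity-one reduction alleviates much of this --- reducing PDEs to ODEs with explicit asymptotic behaviour --- but one still has to impose the correct boundary conditions at the bolt $r=r_{0}$ and the correct asymptotic conditions as $r\to\infty$, and match them across the flow.
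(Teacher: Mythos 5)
Your proposal does not prove the statement it is attached to. Theorem \ref{Has1} is the quoted Haslhofer--M\"uller result: $M$ is an arbitrary \emph{closed} manifold, $g$ an arbitrary linearly unstable Ricci-flat metric, and the paper does not reprove it --- it cites \cite{Has1}, whose argument rests on compactness through Perelman's $\lambda$-functional and a \L ojasiewicz--Simon inequality at compact Ricci-flat metrics (construction of an unstable flow line converging backward to $g$). Your entire outline instead concerns the Taub--Bolt metric on the non-compact $\mathbb{CP}^2\setminus\{\mathrm{pt}\}$: ALF ends, the bolt radius $r_0$, cohomogeneity-one reduction, weighted spaces, indicial roots at infinity. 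None of these structures exist for a general closed Ricci-flat manifold, and the one ingredient that the closed-manifold statement actually requires --- the \L ojasiewicz--Simon inequality for $\lambda$ (or an equivalent construction of the ancient flow from the unstable direction) on a closed manifold --- appears only as a passing remark about backward convergence in your non-compact setting, not as an established step. So as a proof of Theorem \ref{Has1} the attempt fails at the outset: it addresses a different theorem.

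If what you intended was the paper's main result, Theorem \ref{mainthm}, then your sketch is closer in spirit but still diverges from the paper's actual route in ways worth noting. The paper does not invoke a \L ojasiewicz--Simon inequality or an unstable-manifold theorem at all: instability is proved by writing down an explicit diagonal test tensor with $\mathbf{a}(\bar h)<0$ and then minimizing $\mathbf{a}$ over the weighted space $\mathbb{H}^1$ to get an eigentensor with $\lambda>0$; exponential decay of $h$ and its derivatives is obtained via a frequency-function argument rather than indicial-root analysis; and the ancient solution is built by running Ricci--de Turck flows $g^n$ from $g_{\mathrm{Bolt}}+\epsilon_n h$ at times $t_n$ with $\epsilon_n=e^{\lambda t_n}$ (not by normalizing $\lVert g_n(0)-g_{\mathrm{Bolt}}\rVert=\delta$ at time $0$), controlling $w^n=g^n-g_{\mathrm{Bolt}}-e^{\lambda t}h$ through coupled $H^m$/$C^m$ estimates so that $\lVert w^n\rVert\lesssim e^{\lambda t}$; backward convergence and non-triviality then come directly from these bounds and the expansion $g(t)=g_{\mathrm{Bolt}}+e^{\lambda t}h+O(e^{2\lambda t})$, with Arzel\`a--Ascoli supplying the limit. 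Your proposal leaves precisely the analogous quantitative steps (uniform a priori estimates on the open manifold, preservation of asymptotics, the weighted \L ojasiewicz--Simon substitute) as acknowledged obstacles, so even read as a sketch of Theorem \ref{mainthm} it is a strategy statement rather than a proof.
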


To date all known compact Ricci flat manifolds satisfy $\Delta_{g}^{L} \leq 0$. The positive mass problem for Ricci flat manifolds \cite{Cao1} asks if there is a linearly unstable compact Ricci flat metric.
The Taub-Bolt metric, denoted by $g_{\mathrm{Bolt}}$, has been shown to be linearly unstable by Biquard and Ozuch \cite{biquard2025instabilityconformallykahlereinstein}. In this paper we give another, more elementary and explicit, proof of linear instability and prove the following stronger statement.

\begin{theorem}\label{2ndthm}
	There exists a traceless and divergence-free $h\in C^{\infty}(\text{Sym}^2(T^{*}M)) \cap H^1(\mathbb{CP}^2\setminus\{\text{pt}\}, g_{\mathrm{Bolt}})$ solving the equation
	\begin{equation}\label{lambdaeq}
	\Delta^L_{g_{\mathrm{Bolt}}}h =\lambda h \qquad \lambda>0.
\end{equation}
	In particular,  
	\begin{equation}
		\int_{M} \langle \Delta^{L}_{g_{\mathrm{Bolt}}}h, h \rangle_{g_{\mathrm{Bolt}}} dV_{g_\mathrm{Bolt}} >0.
	\end{equation}
\end{theorem}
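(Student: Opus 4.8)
The plan is to exhibit an explicit destabilising direction $h$ by exploiting the cohomogeneity-one structure of the Taub-Bolt metric. Since $\mathbb{CP}^2\setminus\{\mathrm{pt}\}$ carries a cohomogeneity-one $U(2)$ (or $SU(2)\times U(1)$) action with generic orbit the squashed three-sphere, the Taub-Bolt metric can be written in the standard form
\begin{equation*}
	g_{\mathrm{Bolt}} = dr^2 + a(r)^2 \sigma_1^2 + b(r)^2 \sigma_2^2 + c(r)^2 \sigma_3^2,
\end{equation*}
where $\sigma_i$ are the left-invariant coframe on $SU(2)$ and, for Taub-NUT/Taub-Bolt, $a=b$. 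First I would reduce the eigenvalue problem \eqref{lambdaeq} to an ODE system: restricting the Lichnerowicz Laplacian to the finite-dimensional space of $SU(2)$-invariant (or appropriately equivariant) symmetric $2$-tensors diagonal in the adapted frame reduces $\Delta^L_{g_{\mathrm{Bolt}}}$ to a Sturm–Liouville-type operator on the half-line $r\in(r_0,\infty)$, where $r_0$ is the bolt radius. The ansatz $h = h_1(r)(dr^2 - \cdots) + \cdots$ in the transverse-traceless gauge collapses the PDE to a scalar (or small system of) second-order ODE(s) of the form $-h'' + q(r)h = \lambda h$ with a potential $q$ computed from $a,b,c$ and their derivatives.

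The core analytic task is then to prove that this reduced Schrödinger-type operator on $(r_0,\infty)$ has a positive eigenvalue with an $H^1$ eigenfunction. The strategy I would use is variational: construct an explicit compactly supported (or exponentially decaying) test tensor $h$ — the natural candidate is built from the variation of the metric within the Taub-Bolt family transverse to the scaling/diffeomorphism directions, or more concretely a bump in the squashing parameter $b/c$ localised near the bolt — and show directly that the Rayleigh quotient
\begin{equation*}
	\mathcal{Q}(h) = \frac{\int_M \langle \Delta^L_{g_{\mathrm{Bolt}}}h, h\rangle_{g_{\mathrm{Bolt}}}\, dV_{g_{\mathrm{Bolt}}}}{\int_M |h|^2_{g_{\mathrm{Bolt}}}\, dV_{g_{\mathrm{Bolt}}}}
\end{equation*}
is strictly positive for this choice. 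Because the Taub-Bolt metric has a genuine bolt (a two-dimensional fixed-point set, as opposed to the NUT point of Taub-NUT), there is extra room near $r_0$ to concentrate curvature-positive terms; the sign of $\mathcal{Q}(h)$ is governed by the curvature term $2R_{iklj}h^{ij}h^{kl}$ beating the gradient term, and the point is that near the bolt the relevant sectional-curvature combination is sufficiently negative (in the convention where $\Delta^L$ wants it) to force $\mathcal{Q}>0$ for a well-chosen localised $h$. After producing a positive Rayleigh quotient I would upgrade to an actual eigenfunction: since the reduced operator is a Schrödinger operator on a half-line with potential $q(r)\to q_\infty$ at the NUT-type asymptotics, its essential spectrum is $[q_\infty,\infty)$ with $q_\infty\le 0$ (the asymptotic geometry is ALF/asymptotically locally flat with nonnegative bottom of the essential spectrum tied to $0$), so a positive Rayleigh quotient lying above the essential spectrum produces a genuine $L^2$ (hence, by elliptic regularity and the ODE decay estimates, $H^1\cap C^\infty$) eigenfunction $h$ with eigenvalue $\lambda>0$. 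Elliptic regularity gives smoothness; the ODE analysis at $r=r_0$ and $r\to\infty$ gives the correct boundary behaviour so that $h$ extends smoothly across the bolt and decays fast enough to lie in $H^1(\mathbb{CP}^2\setminus\{\mathrm{pt}\}, g_{\mathrm{Bolt}})$.

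The final inequality $\int_M \langle \Delta^L_{g_{\mathrm{Bolt}}}h, h\rangle\, dV > 0$ is then immediate: integrating \eqref{lambdaeq} against $h$ and using that $h\in H^1$ is nonzero gives $\int_M \langle \Delta^L h, h\rangle = \lambda \|h\|_{L^2}^2 > 0$, which simultaneously establishes linear instability in the sense of Definition \ref{leqmeaning} and, through Theorem \ref{Has1} (or rather its non-compact analogue, which is where Theorem \ref{mainthm} comes from), the existence of the ancient flow.

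I expect the main obstacle to be twofold. First, verifying positivity of the Rayleigh quotient rigorously: the reduction produces a potential $q(r)$ that is an explicit but messy rational function of the Taub-Bolt profile functions, and one must either find a clever closed-form test function that makes the integral manifestly positive or carefully bound the integrals — this replaces the numerical evidence of \cite{Cla1} and \cite{Young1} with honest estimates and is the real content of the theorem. Second, the functional-analytic setup on the non-compact manifold: one must be careful that the transverse-traceless gauge is globally well-defined on $\mathbb{CP}^2\setminus\{\mathrm{pt}\}$, that the Lichnerowicz operator is essentially self-adjoint on the relevant domain, and that the decay of $h$ is genuinely $H^1$ (not merely $L^2$) so that the variational characterisation and the subsequent appeal to the Haslhofer–Müller-type machinery are legitimate in the ALF setting; handling the behaviour at the bolt (where the $\sigma_1,\sigma_2$ directions degenerate) without introducing spurious singular solutions is the delicate local point.
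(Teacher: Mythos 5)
Your overall strategy is the same as the paper's: restrict to $SU(2)$-invariant, diagonal, radially symmetric $2$-tensors, exhibit a test tensor whose Rayleigh quotient for $\Delta^L_{g_{\mathrm{Bolt}}}$ is positive, and then upgrade the positive Rayleigh quotient to a genuine smooth $H^1$ eigenfunction using the $O(r^{-3})$ decay of the curvature at the ALF end, followed by elliptic regularity. The technical route for the upgrade differs slightly (you invoke the essential spectrum of a reduced Sturm--Liouville operator, whereas the paper (Theorem \ref{h}) minimises the quadratic form $\mathbf{a}(h)=\int |\nabla h|^2-2R^{\mu\alpha\beta\nu}h_{\alpha\beta}h_{\mu\nu}$ directly over the unit $L^2$-sphere in a weighted Sobolev space $\mathbb{H}^1$, using Rellich--Kondrachov on exhausting compact sets and the pointwise bound $|R^{\mu\alpha\beta\nu}h_{\alpha\beta}h_{\mu\nu}|\le C|h|^2(r+1)^{-3}$ to recover strong convergence of the curvature term); both routes are viable and rest on the same mechanism, namely that the curvature term is a relatively compact perturbation at infinity.

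The genuine gap is that you never produce the test tensor or verify positivity: you explicitly defer ``verifying positivity of the Rayleigh quotient rigorously'' as the main obstacle, and that verification is the entire content of the theorem --- without it nothing replaces the numerical evidence of the cited references. The heuristic you offer (that the sectional curvatures near the bolt have a favourable sign so that a bump in the squashing parameter localised at the bolt works) is not a proof, and it is not obviously true: the relevant quantity is the signed combination $4(K_{r3}+K_{12}-K_{13}-K_{23}-K_{r1}-K_{r2})$ weighted against $|\nabla h|^2$, and by (\ref{K}) all sectional curvatures of Taub-Bolt are positive for $r>0$, so the sign of the quotient depends on a delicate cancellation, not on local negativity of curvature. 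The paper's Theorem \ref{<0} resolves this with the specific choice $h=a\,dr^2-b(\sigma_1^2+\sigma_2^2)+c\,\sigma_3^2$ (the profile functions themselves, with an alternating sign pattern, multiplied by an explicit exponential cutoff $\eta$), for which $|\nabla h|^2$ and the curvature pairing reduce to explicit rational functions of $r$ whose integrals can be evaluated and bounded to give $\mathbf{a}(\bar h)\approx 4\,\mathrm{Vol}(S^3)\cdot(-3.63)<0$. A minor factual slip: at the bolt it is the $\sigma_3$ and $r$ directions that degenerate ($c(r)\to 0$ as $r\to 0$ while $b(r)\to 12n^2$), not $\sigma_1,\sigma_2$; this does not affect the strategy but does affect which boundary conditions your reduced ODE must carry at $r=0$.
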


The construction of the Ricci flow of Theorem \ref{mainthm} will only be possible because of the solution $h$ of equation (\ref{lambdaeq}).

  Theorem \ref{Has1} holds for compact manifolds, and it is not yet known whether a similar result holds in general for non-compact manifolds. However, recently an analogous result has been proven for asymptotically locally Euclidean Ricci flat metrics \cite{Alix2}. Recently, Kim and Ozuch \cite{kim2025ricciflowalfmanifolds} proved the Taub-Bolt metric is dynamically in the sense that there is Ricci flow with initial condition arbitrarily close to the Taub-Bolt metric which does not flow back to Taub-Bolt, using a Perelman's $\lambda$-functional adapted to the ALF setting. The proof of Theorem \ref{mainthm} does not use their $\lambda$-functional, but instead obtains the ancient solution via a hands-on approach. Also, Takahashi \cite{Tak1} constructs an ancient solution coming out of the ALF Euclidean Schwarzschild metric.
  
  This paper is organised as follows. Section \ref{TB} introduces the Taub-Bolt metric, Section \ref{NBS} shows the linear instability of the Taub-Bolt and in particular proves Theorem \ref{2ndthm}. Finally, Section \ref{construct} constructs the ancient solution of Theorem \ref{mainthm}.

$\mathbf{Acknowledgements.}$ I would like to thank my
supervisor Jason Lotay for his support and advice. Also, many thanks to Tristan Ozuch for useful discussions concerning this paper. In particular, for pointing out that the unstable eigentensor appearing in the first version of this paper had not been correctly put in gauge. This research is supported by a scholarship from EPSRC (grant number EP/W524311/1).

\section{Taub-Bolt}\label{TBsect}
The family of Taub-Bolt metrics on $\mathbb{CP}^2\setminus\{\text{pt}\}$  was first discovered by Page \cite{Bolt}, and consists of all the metrics $\alpha g_{\mathrm{Bolt},n}$ for $\alpha>0$, where $g_{\mathrm{Bolt},n}$ can be written in a local frame as:
\begin{equation}\label{TB}
	g_{\mathrm{Bolt},n}=\frac{(r+n)(r+3n)}{r\left(r+\frac{3n}{2}\right)}dr^{2}+ 4(r+n)(r+3n)\left(\sigma_{1}^{2}+\sigma_{2}^{2}\right)+ 16n^{2}\frac{r\left(r+\frac{3n}{2}\right)}{(r+n)(r+3n)}\sigma_{3}^2,
\end{equation}
for $r>0$ and where $\sigma_{1}, \sigma_{2}, \sigma_{3}$ is a Milnor frame on $S^3$.
We note that the metrics $g_{\mathrm{Bolt},n}$ and $g_{\mathrm{Bolt},m}$ are isometric up to scaling. Indeed, if  we consider the diffeomorphism $\phi(r)=\frac{n}{m}r$ of $\mathbb{CP}^2\setminus\{\text{pt}\}$ then $\phi^{*}g_{\mathrm{Bolt},n}=\frac{n^2}{m^2}g_{\mathrm{Bolt},m}$.

It will be convenient for later to calculate the Christoffel symbols, curvature components and sectional curvatures of the Levi-Civita connection associated to $g_{\mathrm{Bolt},n}$. Define 
\begin{align*}\label{abc}
	a(r)&=\frac{(r+3n)(r+n)}{r\left(r+\frac{3n}{2}\right)},\\
	 b(r)&=4(r+3n)(r+n),\\
	c(r)&=16n^2\frac{r\left(r+\frac{3n}{2}\right)}{(r+3n)(r+n)},
\end{align*}
so that
\begin{equation*}
	g_{\mathrm{Bolt},n}=a(r) dr^2+ b(r)\left(\sigma_{1}^2+\sigma_{2}^2\right)+c(r)\sigma_{3}^2.
\end{equation*}
We have the following Christoffel symbols:
\begin{equation}\label{Christ}
	\begin{aligned}
		&\Gamma_{rr}^{r}= \frac{\partial_{r}a}{2a},\quad \Gamma_{1r}^{1}=\Gamma_{r1}^{1}=\Gamma_{2r}^{2}=\Gamma_{r2}^{2}=\frac{\partial_{r}b}{2b},\\
		&\Gamma_{3r}^{3}=\Gamma_{r3}^{3}=\frac{\partial_{r}c}{2c},\quad \Gamma_{11}^{r}=\Gamma_{22}^{r}=-\frac{\partial_{r}b}{2a},\quad \Gamma_{33}^{r}=-\frac{\partial_{r}c}{2a}\\
		&\Gamma_{32}^{1}=-\Gamma_{31}^{2}=\frac{c}{b}-2,\quad \Gamma_{12}^{3}=-\Gamma_{21}^{3}=1,\quad \Gamma_{23}^{1}=\frac{c}{b}=-\Gamma_{13}^{2}=\frac{c}{b}.
	\end{aligned}
\end{equation}
The non-zero curvature terms are the following:
\begin{equation}\label{R's}
	\begin{aligned}
		&R_{r11r}=R_{r22r}=-\frac{\partial_{rr}b}{2}+\frac{\partial_{r}b \partial_{r}a}{4a}+\frac{(\partial_{r}b)^2}{4b}= -n\frac{5r^3+18nr^2+27n^2r+18n^3}{r\left(r+\frac{3n}{2}\right)(r+3n)(r+n)},\\
		&R_{r33r}=-\frac{\partial_{rr}c}{2}+\frac{\partial_{r}c \partial_{r}a}{4a}+\frac{(\partial_{r}c)^2}{4c}= 8n^3 \frac{5r^3+18nr^2+27n^2r+18n^3}{(r+3n)^3(r+n)^3}\\
		& R_{1221}=-\frac{(\partial_{r}b)^2}{4a}+4b-3c=8n^3 \frac{5r^3+18nr^2+27n^2r+18n^3}{(r+3n)(r+n)}  ,\\
		& R_{1331}=R_{2332}=-\frac{\partial_{r}c \partial_{r}b}{4a}+\frac{c^2}{b}=-16n^3 \frac{r\left(r+\frac{3n}{2}\right)(5r^3+18nr^2+27n^2 r+18n^3)}{(r+3n)^3(r+n)^3}\\
		& R_{0123}= R_{0231}= \frac{-1}{2} R_{0312}=  \frac{1}{2}(\partial_{r}c-\frac{c}{b}\partial_{r}b)= -4n \frac{4r^3+9nr^2-9n^3}{(r+3n)^2(r+n)^2}.
	\end{aligned}
\end{equation}

Thus, the sectional curvatures are the following:
\begin{equation} \label{K}
	K_{12}=K_{r3}=-2K_{1r}=-2K_{2r}=-2K_{13}=-2K_{23}=  \frac{5r^3+18nr^2+27n^2r+18n^3}{2n(r+3n)^3(r+n)^3} .
\end{equation}

\section{Linear instability of Taub-Bolt} \label{NBS}
For the rest of this paper set $M=\mathbb{CP}^2\setminus\{\text{pt}\}$. In this section we will define and prove the linear instability of members of the Taub-Bolt family under the Ricci flow. 
We will say that $g_{\mathrm{Bolt},n}$ is linearly unstable if $\Delta^{L}_{g_{Bolt},n} \not\leq 0$, i.e. there exists a traceless and divergence-free $h\in C^{\infty}(\text{Sym}^2(T^{*}M))$ with $h$ and $\nabla h$ of bounded $L_{\mathrm{Bolt},n}^2$ norm and satisfying
\begin{equation}\label{unstabeineq}
	\int_{M} \langle \Delta^{L}_{g_{\mathrm{Bolt},n}}h, h \rangle dV_{\mathrm{Bolt},n} >0.
\end{equation}
Since the metrics $g_{\mathrm{Bolt},n}$ and $g_{\mathrm{Bolt},m}$ are isometric up to scaling, to determine the linear stability of $g_{\mathrm{Bolt},n}$, it suffices to determine the linear stability of $g_{\mathrm{Bolt},1}$. For the rest of this paper we denote $g_{\mathrm{Bolt},1}$ by $g_{\mathrm{Bolt}}$.

After that the existence of a smooth traceless and divergence-free $h\in H^1(M, g_{\mathrm{Bolt}})$ satisfying 

\begin{equation}\label{solution}
	\Delta_{g_{\mathrm{Bolt}}}^{L}h=\lambda h, \qquad \lambda>0,
\end{equation}
 will be shown. This will be achieved by finding a minimiser of the functional 
 \begin{equation}
 	\bold{a}(h)=\int _{M} \lvert \nabla h \rvert ^2- 2R^{\mu \alpha \beta \nu}h_{\alpha \beta}h_{\mu \nu},
 \end{equation}
 among traceless and divergence-free $h\in \mathbb{H}^1$ with unit $L^2(M, g_{\mathrm{Bolt}})$ norm, where $\mathbb{H}^1$ is a weighted Sobelev space with norm 
 	\begin{equation*}
 	\lVert f \rVert_{\mathbb{H}^1}= \left(\int_{M} \lvert \nabla f \rvert^2\right)^{\frac{1}{2}}+ \left(\int_{M} \frac{\lvert f \rvert^2}{(r+1)^2}\right)^{\frac{1}{2}},
 \end{equation*}
 for all $f \in C^{\infty}_{c}(\mathrm{Sym}^2(T^{*}M))$. The equation (\ref{solution}) has been shown to have a solution numerically in \cite{Cla2}, \cite{Young1}. However no analytic proof of a solution is given.
 We will then show that in fact this minimiser is all in $W^{k,2}(M, g_{\mathrm{Bolt}})$ and $L^\infty(M, g_{\mathrm{Bolt}})$. Furthermore, we will show that the norms of $h$ and its derivatives decay exponentially fast in $r$:
 for every $m\geq 0$ there is a $C=C(m), \alpha=\alpha(m)>0$ such that 
 $$|\nabla^m h|_{g_{\mathrm{Bolt}}}(r)\leq Ce^{-\alpha r}.$$
 This will be achieved by showing that the function
 $$I(r)= |h|^2(r) \sqrt{det((g_{\mathrm{Bolt}})_{ij})}\coloneqq |h|^2(r)\nu(r),$$
 which measures the growth of $|h|$, decays exponentially in $r$. Proving this will involve a frequency function
 $$U(r)= \frac{(logI)'}{2}+ \frac{\partial_r \nu}{2\nu},$$
analogous to \cite{Almgren}. It will be demonstrated that $U(r)$ is bounded above by a negative constant proportional to $\sqrt{\lambda}$. Since $\frac{\partial_r \nu}{2\nu}(r) \rightarrow 0$ as $r\rightarrow \infty$, $U$ measures the exponential decay rate of $\sqrt{I}$.
 The notion of a frequency function has been used by many to find growth bounds on solutions to differential equations. For example, in Bernstein \cite{Bern} to study the asymptotic structure of ends of shrinkers for mean curvature flow, and in Colding and Minicozzi \cite{mini1} and \cite{colding2021optimalgrowthboundseigenfunctions} where polynomial growth bounds of $L^2$ eigenfunctions of a Ornstein-Uhlenbeck operator and drift Laplacian respectively are shown.

 To construct the ancient solution of Theorem \ref{mainthm}, Section \ref{construct} will first construct an ancient solution to the Ricci-de Turck flow, then pulling back by a one-parameter family of diffeomorphisms will yield an ancient to the Ricci flow. The advantage of working with the Ricci-de Turck flow rather than the directly with the Ricci flow is that, unlike the Ricci flow, the Ricci-de Turck flow is strictly parabolic, with linearisation about $g_{\mathrm{Bolt}}$ given by 
 $$\partial_t h =\Delta^L_{g_\mathrm{Bolt}} h.$$
 To ensure that the non-trivial Ricci de-Turck flow solution of Section \ref{proofofmain} pulls back to a non-trivial Ricci flow solution, and not just a diffeomorphism and rescaling of $g_{\mathrm{Bolt}}$, we will gauge fix by requiring that $h$ is traceless and divergence-free. 
    
\subsection{Ricci-de Turck flow}

Recall the Ricci flow equation 
\begin{equation}\label{RFeq}
	\partial_{t}g(t)=-2\text{Ric}(g(t)), \text{ } g(0)=g_{0}.
\end{equation}
Suppose $g_{0}$ is Ricci flat. It is shown in \cite{Top1} that the linearisation of $-2\text{Ric}_{g}$ about $g_0$ is
\begin{equation}\label{Riccilin}
	\mathcal{D}_{g_{0}}(-2\text{Ric}_{g}) \coloneqq \frac{d}{ds}\rvert_{s=0}(-2\text{Ric}_{g_{0}+sh})= \Delta_{g_{0}}^{L}h+ \mathcal{L}_{W_{g_{0}}(h)}g_0,
\end{equation}
where $h\in C^{\infty}(\text{Sym}^2(T^{*}M))$ and $W_{g_{0}}(h)$ is  the vector field defined by
\begin{equation*}
	g_{0}(W_{g_{0}}(h), \cdot)= -\text{tr}_{g_{0}}\nabla^{g_{0}}\left(h-\frac{1}{2}\left(\text{tr}_{g_{0}}h\right)g_{0}\right).
\end{equation*}
 
The operator $\mathcal{D}_{g_{0}}(-2\text{Ric}_{g})$ is not elliptic. This can be seen by considering any Ricci flat metric. The linearisation about this Ricci flat metric would have a finite dimensional kernel if the Ricci flow equation was strictly parabolic. But the pullback of a Ricci flat metric under any diffeomorphism is still Ricci flat. So the kernel must be of infinite dimension, a contradiction.  So instead of the Ricci flow we will consider the Ricci-de Turck flow, which is given by the following equation:
\begin{equation}\label{DeTurck}
	\partial_{t} g(t)= -2\text{Ric}(g(t))+ \mathcal{L}_{V(g(t),g_{0})}g(t), \text{ }g(0)=g_{0},
\end{equation}
where $V(g(t),g_{0})$ is a time dependent vector field satisfying
\begin{equation*}
	g(V(g(t),g_{0})(t), \cdot)= -\text{tr}_{g(t)}\nabla^{g(t)} g_{0}-\frac{1}{2}\nabla^{g(t)} \text{tr}_{g(t)}g_{0}.
\end{equation*}
In local coordinates, $V(g(t),g_{0})^{k}=g(t)^{ij}(\Gamma^{k}_{ij}(g(t))-\Gamma^{k}_{ij}(g_{0}))$. In \cite{Shi1}, it is shown that solutions of equation (\ref{RFeq}) and (\ref{DeTurck}) are equivalent up to a time dependent diffeomorphism: If $\phi_{t}$ is the family of diffeomorphisms of $M$ associated with the vector field $V(t)$, meaning $\partial_{t}\phi(p)=V(t,p)$ for all time $t$ and points $p\in M$ and $\phi_{0}=Id_{M}$, then if $g(t)$ satisfies (\ref{RFeq}), we have that $\phi_{t}^{*}g(t)$ satisfies (\ref{DeTurck}). 

The linearisation of $-2\text{Ric}(g)+ \mathcal{L}_{V(g,g_{0})}g$ about a Ricci flat metric $g_{0}$ is
\begin{equation}\label{lin}
	\frac{d}{ds}\rvert_{s=0} (-2\text{Ric}(g_{0}+sh)+ \mathcal{L}_{V(g_{0}+sh,g_{0})}(g_{0}+sh)=\Delta^{L}_{g_{0}} h.
\end{equation}
A proof of (\ref{lin}) can be found in \cite{Alix}. From Definition \ref{LichLap}, it is clear that the Lichnernowicz Laplacian $\Delta_{g_{0}}^{L}$ is elliptic. Furthermore, critical points of a certain functional, which we introduce in (\ref{afuntional}) below are weak solutions to $\Delta^{L}_{g_{\mathrm{Bolt}}} h=\lambda h$ for $\lambda \in \mathbb{R}$, as we explain in Subsection \ref{var}.

\subsection{Functional $\mathbf{a}$}\label{var}
 For the rest of Section \ref{NBS}, $g_{\mathrm{Bolt}}$ will be our background metric, and so all connections, norms, etc.~are with respect to $g_{\mathrm{Bolt}}$. In the rest of Section \ref{NBS} we suppress the volume form in integrals, and $r$ refers to the coordinate in equation (\ref{TB}). In what follows we adapt the variational approach used in \cite{Tak1}. 

\begin{definition}\label{H^1}
	Let $\lVert \cdot \rVert_{\mathbb{H}^1}$ be a norm on $C^{\infty}_{c}(\mathrm{Sym}^2(T^{*}M))$ given by
	\begin{equation*}
		\lVert f \rVert_{\mathbb{H}^1}= \left(\int_{M} \lvert \nabla f \rvert^2\right)^{\frac{1}{2}}+ \left(\int_{M} \frac{\lvert f \rvert^2}{(r+1)^2}\right)^{\frac{1}{2}}.
	\end{equation*}
	for all $f \in C^{\infty}_{c}(\mathrm{Sym}^2(T^{*}M))$.
	
	Furthermore, we define $\mathbb{H}^1$ to be the closure of $C^{\infty}_{c}(\mathrm{Sym}^2(T^{*}M))$ with respect to $\lVert \cdot \rVert_{\mathbb{H}^1}$. We also write $\lVert h \rVert_{2}= \left(\int_{M} \lvert h \rvert^2\right)^{\frac{1}{2}}$ where $h\in \mathbb{H}^1$ for which the integral exists . Finally, let $\mathbb{S}^1\coloneqq \left\{ h \in\mathbb{H}^{1}: h \text{ is diagonal, radially symmetric}, \hspace{1mm} \mathrm{Tr}(h)=0,\hspace{1mm} \mathrm{div}(h)=0, \hspace{1mm} \lVert h\rVert_{2}=1 \right\}$. Here diagonal means with respect to $dr, \sigma_{1}, \sigma_{2}, \sigma_{3}$.
\end{definition}

Define the following functional
\begin{equation}\label{afuntional}
	\bold{a}(h)=\int _{M} \lvert \nabla h \rvert ^2- 2R^{\mu \alpha \beta \nu}h_{\alpha \beta}h_{\mu \nu}.
\end{equation}
for $h\in \mathbb{H}^1$. 
	Using the curvature components (\ref{R's}), it can be checked that there is a constant $C>0$ such that
\begin{equation*}
	\left|R^{\mu \alpha \beta \nu}\right| \leq \frac{C}{(r+1)^3},
\end{equation*} 
for all $\mu, \alpha, \beta, \nu$. Thus, $\mathbf{a}(h)$ exists for all $h\in \mathbb{H}^1$.
  We now consider critical points of $\bold{a}$.  Define $$M_{R} \coloneqq M \cap \{r\leq R\},$$ and denote by $\#:T^{*}M \rightarrow TM$ the map defined by $\alpha=g_{\mathrm{Bolt}}(\alpha^{\#},\cdot)$, for all 1-forms $\alpha$. Let $h\in C^{\infty}(\text{Sym}^2(T^{*}M)) \cap \mathbb{H}^1$ and  $f\in C^{\infty}_{c}(\text{Sym}^2(T^{*}M))$.
Assuming the integration by parts formula 
\begin{equation}\label{integbyparts}
	\int_{M} \langle \nabla f, \nabla h \rangle + \langle f, \Delta h \rangle=0,
\end{equation}
we have, 
\begin{equation*}
	\begin{split}
		\frac{d}{ds}\rvert_{s=0} \bold{a}(h+sf)&=\frac{d}{ds}\rvert_{s=0} \int _{M} \lvert \nabla h+sf \rvert ^2- 2R^{\mu \alpha \beta \nu}(h+sf)_{\alpha \beta}(h+sf)_{\mu \nu}\\
		&= 2\int_{M} \langle \nabla h, \nabla f \rangle- 2R^{\mu \alpha \beta \nu}h_{\alpha \beta}f_{\mu \nu}\\
		&= -2\int_{M} \langle \Delta h, f \rangle+ 2R^{\mu \alpha \beta \nu}h_{\alpha \beta}f_{\mu \nu}\\
		&= -2\int_{M} \langle \Delta^{L} h, f \rangle, \quad \text{ using Definition }\ref{LichLap}. 
	\end{split}
\end{equation*}
Thus, critical points of $\bold{a}$ when restricted to $h\in \mathbb{S}^1$ are weak solutions to $\Delta^{L} h=\lambda h$ for $\lambda \in \mathbb{R}$.

 We now prove (\ref{integbyparts}). Using the identity (which is true for a general metric), 
\begin{equation*}
	\text{tr}_{g_{\mathrm{Bolt,1}}} \nabla (\langle f, \nabla_{\cdot}h  \rangle)=\langle \nabla f, \nabla h \rangle+ \langle f, \Delta h \rangle,
\end{equation*}
where $\langle f, \nabla_{\cdot}h  \rangle$ is the 1-form satisfying $\langle f, \nabla_{\cdot}h  \rangle(X) =\langle f, \nabla_{X}h \rangle$, and so it follows from the divergence theorem that  
\begin{equation}\label{because}
	\begin{split}
		\int_{M_{R}} \langle \nabla f, \nabla h \rangle + \langle f, \Delta h \rangle &= \int_{M_{R}} \text{tr}_{g_{\mathrm{Bolt,1}}} \nabla (\langle f, \nabla_{\cdot} h \rangle)\\
		&= \int_{M_{R}} \text{div}(\langle f, \nabla_{\cdot} h \rangle^{\#})\\
		&= \int_{\partial M_{R}} \langle \langle f, \nabla_{\cdot} h \rangle^{\#}, n_{R} \rangle\\
		& \rightarrow 0 \text{ as } R \rightarrow \infty,
	\end{split}
\end{equation}
where $n_{R}$ is the unit normal to $\partial M_{R}$. The last line in (\ref{because}) is because
\begin{equation*}
	\left\lvert \langle \langle f, \nabla_{\cdot}h \rangle^{\#}, n_{R} \rangle \right\rvert \leq \left\lvert \langle f, \nabla_{\cdot}h \rangle^{\#} \right\rvert \leq \lvert f \rvert \lvert \nabla h \rvert,
\end{equation*}
and so
\begin{equation*}
	\left\lvert \int_{M} \langle \langle f, \nabla_{\cdot}h \rangle^{\#}, n_{R} \rangle \right\rvert \leq \int_{M} \left\lvert \langle \langle f, \nabla_{\cdot}h \rangle^{\#}, n_{R} \rangle \right\rvert \leq \int_{M} \lvert f \rvert \lvert \nabla h \rvert \leq \lVert \nabla h \rVert_{2} \lVert f \rVert_{2}  < \infty.
\end{equation*}
Hence $\int_{\partial M_{R}} \langle \langle f, \nabla_{\cdot}h \rangle^{\#}, n_{R} \rangle \rightarrow 0$ as $R\rightarrow \infty$. Taking the limit $R\rightarrow \infty$ in equation (\ref{because}) gives the formula (\ref{integbyparts}).

\subsection{Linear instability of the Taub-Bolt family}
We say that $g_{\mathrm{Bolt}}$ is linearly unstable if $\Delta^{L} \not\leq 0$ on traceless and divergence-free tensors, i.e there exists a $h\in  \mathbb{S}^1$ such that 
	\begin{equation*}
		\int_{M} \langle \Delta^{L}h, h \rangle >0.
	\end{equation*}

\begin{theorem}\label{<0}
	There exists  $h\in \mathbb{S}^1$ such that $\bold{a}(h)<0$, and so $g_{\mathrm{Bolt}}$ is linearly unstable under the Ricci flow.
\end{theorem}

\begin{proof}
	The proof will proceed as follows. We will find a diagonal, radially symmetric, traceless and divergence-free symmetric two tensor $h$ such that $\bold{a}(h)<0$, and  $h$ is $L^2$ so that after rescaling $h\in \mathbb{S}^1$.  It will be the case that $h$ will have the orthogonal decomposition $$h=h_0-\mathrm{div}_0^*(\omega),$$ for some traceless divergence-free $h_0$ and 1-form $\omega$, 
	where $\mathrm{div}_0^*(\omega)= \frac{1}{2}\mathcal{L}_{\omega^{\#}}g$ is the traceless part of the adjoint of the divergence operator. The tensor $h$ will be obtained by  first constructing $h_0$ and then finding an $\omega$ such that $\mathrm{div}(h)=\mathrm{div}(\mathrm{div}_0^*\omega)$.
	
	Denote the space of traceless symmetric two tensors by $Sym_0^2(T^*M)$.
It will be useful to define the operator $P$ acting on $Sym_0^2(T^*M)$ by  $$P= \Delta^L+\frac{4}{3}\mathrm{div}_0^* \mathrm{div}.$$
	
	In \cite{biquard2025instabilityconformallykahlereinstein}, the following identities are shown,
	$$\Delta^L \mathrm{div}_0^*= \mathrm{div}_0^* B \mathrm{div}^*, \qquad \mathrm{div} \Delta^L= B\mathrm{div}^* \mathrm{div},$$
for $Sym_0^2(T^*M)$,	where $B=\mathrm{div} +\frac{1}{2}d\mathrm{Tr}$ is the Bianchi operator. It follows that $P$ preserves the orthogonal decomposition of elements in $Sym_0^2(T^*M)$ into $ker(\mathrm{div}) \oplus \mathrm{im}(\mathrm{div}^*_0).$
	Thus, since $h$ is also divergence-free, we have 		
	$$\bold{a}(h)=\langle \Delta^L h, h \rangle = \langle P(h), h \rangle= \langle P(h_0), h_0 \rangle -\langle P(\mathrm{div}_0^*(\omega)),  \mathrm{div}_0^*(\omega) \rangle.$$
	
	The second term above is non-negative. Indeed, Delay \cite{MR4730427} 
	 proves that for a compactly-supported 1-form $\tilde{\omega}$, 
	 \begin{equation*}
	 \langle P(\mathrm{div}_0^*(\tilde{\omega})),  \mathrm{div}_0^*(\tilde{\omega}) \rangle = \frac{1}{6}||d^* d \tilde{\omega} ||_{L^2} \geq 0.
	 \end{equation*}
	The fact that $\omega$ decays suitably fast in $r$, the cut-off function method \cite[Appendix A]{Erwan2} can be used to show that,
	$$\langle P(\mathrm{div}_0^*(\omega)),  \mathrm{div}_0^*(\omega) \rangle = \frac{1}{6}||d^* d \omega ||_{L^2}.$$
	Hence it suffices to show that  $$\langle P(h_0), h_0)\rangle <0.$$
	
	We have,
	\begin{align*}
		\langle P h_0, h_0 \rangle &= \langle \Delta^L h_0, h_0 \rangle +\frac{4}{3}\langle \mathrm{div}_0^* \mathrm{div} h_0, h_0 \rangle\\
		& = \langle \Delta^L h_0, h_0 \rangle +\frac{4}{3}\langle  \mathrm{div} h_0,\mathrm{div}  h_0 \rangle\\
		&= \mathbf{a}(h_0)+ \frac{4}{3}\langle  \mathrm{div} h_0,\mathrm{div}  h_0 \rangle.
	\end{align*}

We now construct $h_0$.	
	Recall the functions
	\begin{equation*}
		a(r)=\frac{(r+3)(r+1)}{r(r+\frac{3}{2})},\quad b(r)=4(r+3)(r+1), \quad c(r)=16\frac{r\left(r+\frac{3}{2}\right)}{(r+3)(r+1)},
	\end{equation*}
	defined in Section \ref{TBsect}.  
	Define $k\in C^{\infty}(\text{Sym}^2(T^{*}M))$  by
	\begin{equation*}
		k=a dr^{2}-b(\sigma_{1}^2+\sigma_{2}^2)+c\sigma_{3}^2.
	\end{equation*}
	Then, using (\ref{Christ}), we have
	\begin{equation*}
		\nabla_{1} k_{1r}= \nabla_{1}k_{r1}=\nabla_{2} k_{2r}= \nabla_{2}k_{r2}= \frac{-\partial_{r}b}{2},
	\end{equation*}
	\begin{equation*}
		\nabla_{1}k_{23}=\nabla_{1}k_{32}= -\nabla_{2}k_{13}=-\nabla_{2}k_{31}=c.
	\end{equation*}
	So 
	\begin{equation*}
		\lvert \nabla k \rvert ^2= \frac{(\partial_{r}b)^2}{ab^2}+\frac{4c}{b^2}=\frac{4r\left(r+\frac{3}{2}\right)(2r^2+8r+7)}{(r+3)^3(r+1)^3},
	\end{equation*}
	and 
	$$div(k)= -\frac{4r+8}{(r+3)(r+1)}dr.$$
	Define the piecewise smooth function on $M$ by
	\begin{equation*}
		\eta(r)= \frac{r^2}{(r+1)^{12}}.
		\end{equation*}
		Set the traceless tensor $h_0=\eta k\in \mathbb{H}^{1} \cap L^2$. Then we have an orthogonal decomposition,
			\begin{align*}
				\nabla(\eta k)= d\eta \otimes k+ \eta \nabla k,
			\end{align*}
			Hence,
			\begin{align*}
				\lvert \nabla(\eta k) \rvert ^2
				&= \frac{(\partial_{r} \eta)^2}{a}\lvert k \rvert ^2+\eta ^2\lvert \nabla k \rvert ^2\\
				&= 4\frac{(\partial_{r}\eta) ^2}{a}+ \eta ^2\lvert \nabla k \rvert ^2.
			\end{align*}
			At the end of this proof it will be shown that
			\begin{align*}
				\langle P(h_0), h_0 \rangle  =\bold{a}(h_0)+ \frac{4}{3}\langle  \mathrm{div}\, h_0,\mathrm{div}\, h_0 \rangle<0.
			\end{align*}
			The tensor $h_0$ is not divergence-free. Indeed, $\mathrm{div}(h)= \alpha\,dr$ for some function $\alpha$ depending only on $r$. To remedy this, we look for a function $\beta=\beta(r)$ such that
			$$
			\mathrm{div}\big(\mathrm{div}_0^*(\beta\,dr)\big)= \mathrm{div}(h).
			$$
			Then $h_0-\mathrm{div}_0^*(\beta\,dr)$ will be diagonal, radially symmetric, traceless and divergence-free. The ODE for $\beta$ is
			\begin{align*}
				\alpha
				&= \frac{3}{4}\big(\Delta \beta + (\partial_r \Delta(r))\beta\big)\\
				&= \frac{3}{4}\partial_r\!\big(\partial_r\beta + \Delta(r)\beta\big).
			\end{align*}
			Integrating and setting the integration constant equal to zero gives
			$$
			\alpha_1=\partial_r \beta +\Delta(r)\beta,
			$$
			where $\displaystyle \alpha_1(r)= \int_0^r \frac{4}{3}\alpha (r')\,dr'.$
			Using the integrating factor
			\[
			f(r)=\exp\!\Big(\int_0^r \Delta(s)\,ds\Big)
			\]
			and integrating, with constant zero, gives
			\[
			\beta(r)= f(r)^{-1}\int_0^r \alpha_1(s)\,f(s)\,ds.
			\]
			The choice of integration constants imply that $\beta$ decays suitably fast so that $\mathrm{div}_0^*(\beta\,dr) \in \mathbb{H}^1 \cap L^2$.

				To finish, we show  that $\langle P(h_0), h_0 \rangle  =\bold{a}(h_0)+ \frac{4}{3}\langle  \mathrm{div} h_0,\mathrm{div}  h_0 \rangle<0$. We note that the volume form $dV_{g_\mathrm{Bolt},1}= 4nb$.
	         We compute:
\allowdisplaybreaks
			\begin{align*}
			\langle P(h_0), h_0 \rangle&
			=\bold{a}(h_0)+ \frac{4}{3}\langle  \mathrm{div} h_0,\mathrm{div}  h_0 \rangle\\ &=\int_{M} \lvert \nabla(\eta k) \rvert ^2-2\eta ^2 R^{\mu \alpha \beta \nu}k_{\alpha \beta}k_{\mu \nu} + \frac{4}{3}\int_M |\mathrm{div}(\eta k) |^2\\
		&=4\text{Vol}(S^3)\biggl[ \int_{0}^{\infty} \eta ^2 \lvert \nabla k \rvert ^2b dr +\int_{0}^{\infty}\frac{(\partial_{r}\eta)^2 \lvert k \rvert ^2 b}{a}dr \\
		&\hspace{4mm}-2\int_{3}^{\infty} R^{ijji}k_{ii}k_{jj}\eta^2 dr + \frac{4}{3}\int_M |\mathrm{div}(\eta k) |^2\biggr]\\
		&=4\text{Vol}(S^3)\biggl[\int_{0}^{\infty}\left(\lvert \nabla k \rvert ^2-4(K_{r3}+K_{12}-K_{13}-K_{23}-K_{r1}-K_{r2})\right)\eta^2 b dr\\
		& \hspace{60mm}+\int_{0}^{\infty} 4\frac{(\partial_{r}\eta)^2}{a}b dr + \frac{4}{3}\int_0^\infty |\mathrm{div}(\eta k) |^2 b dr\biggr]\\
		&=4\text{Vol}(S^3)\biggl[4\int_{3}^{\infty}\left(\frac{8r^4+4r^3-68r^2-174r-144}{(r+3)^2(r+1)^2}\right) \frac{r^4}{(r+1)^{24}}dr \\
		&\hspace{4mm}+\int_{0}^{\infty}  16\left(\frac{2r-\frac{12r^2}{(r+1)}}{(r+1)^{24}}\right)^2r(r+\frac{3}{2})dr\\
		&\hspace{4mm} + \frac{4}{3}\int_0^{\infty}  4\left(2r-\frac{12r^2}{(r+1)}+2\frac{2r+4}{(r+3)(r+1)}\right)^2r\left(r+\frac{3}{2}\right) \frac{1}{(r+1)^{24}} dr \biggr]\\
		&=  4\text{Vol}(S^3)[-0.00027...+0.000048...+0.000052...]\\
		&<0.
	\end{align*}

	Therefore, $g_{\mathrm{Bolt}}$ is linearly unstable under the Ricci flow.
\end{proof}
\subsection{Existence of a positive eigenvalue of $\Delta^{L}_{g_{\mathrm{Bolt}}}$}
The existence of an $h\in \mathbb{S}^1$ solving $\Delta^{L}h=\lambda h$ with $\lambda>0$, by finding a minimiser of $\bold{a}$ will be now be shown. We adapt the argument given in \cite[Theorem 3.5]{Tak1} and check that we can assume $h$ is traceless and divergence-free.

\begin{theorem}\label{h}
	There exists a non-zero $h\in \mathbb{S}^1$ solving 
	\begin{equation*}
		\Delta^{L}h=\lambda h
	\end{equation*}
	with $\lambda>0$, in the sense that for all $f\in C^{\infty}_{c}(\mathrm{Sym}^2(T^{*}M))$, 
	\begin{equation*}
		\int_{M} \langle \nabla h, \nabla f \rangle-2R^{\mu \alpha \beta \nu}h_{\alpha \beta}f_{\mu \nu}+\lambda \langle h, f \rangle =0.
	\end{equation*}
\end{theorem}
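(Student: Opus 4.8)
The plan is to obtain $h$ as a minimiser of the functional $\mathbf{a}$ from (\ref{afuntional}) over $\mathbb{S}^1$, searched for inside the closed subspace of diagonal radially symmetric tensors, by the direct method of the calculus of variations; the sign $\lambda>0$ and the non-triviality will then be automatic consequences of Theorem \ref{<0}. Concretely, introduce $\mathbb{H}^1_{\mathrm{sym}}\subset\mathbb{H}^1$, the closure of the compactly supported smooth symmetric $2$-tensors of the form $\phi_0(r)\,dr^2+\phi_1(r)(\sigma_1^2+\sigma_2^2)+\phi_3(r)\,\sigma_3^2$; this is a (weakly) closed subspace containing the tensor $\bar h$ of Theorem \ref{<0}, so, after normalising, $\Lambda:=\inf\{\mathbf{a}(u):u\in\mathbb{S}^1\cap\mathbb{H}^1_{\mathrm{sym}}\}<0$. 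On $\mathbb{S}^1$ we have $\int_M|u|^2/(r+1)^2\le\int_M|u|^2=1$, and $|R^{\mu\alpha\beta\nu}|\le C/(r+1)^3$ gives $|\int_M R^{\mu\alpha\beta\nu}u_{\alpha\beta}u_{\mu\nu}|\le C$; hence $\Lambda$ is finite and any minimising sequence $u_k$ satisfies $\int_M|\nabla u_k|^2=\mathbf{a}(u_k)+2\int_M R^{\mu\alpha\beta\nu}(u_k)_{\alpha\beta}(u_k)_{\mu\nu}\le\Lambda+1+2C$ for $k$ large, so $\{u_k\}$ is bounded in $\mathbb{H}^1$.

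After extracting a subsequence, $u_k\rightharpoonup h$ in $\mathbb{H}^1$ with $h\in\mathbb{H}^1_{\mathrm{sym}}$. On each bounded region $M_R$ the $\mathbb{H}^1$-norm dominates the ordinary $H^1(M_R)$-norm, so Rellich--Kondrachov together with a diagonal argument over $R\in\mathbb{N}$ yields (a further subsequence with) $u_k\to h$ in $L^2(M_R)$ for every $R$, hence a.e.\ on $M$. The curvature term is then compact: the integral over $M_R$ converges by $L^2(M_R)$-convergence, while $|\int_{M\setminus M_R} R^{\mu\alpha\beta\nu}(u_k)_{\alpha\beta}(u_k)_{\mu\nu}|\le\frac{C}{R+1}\int_M\frac{|u_k|^2}{(r+1)^2}\le\frac{C'}{R+1}$ uniformly in $k$ (and likewise for $h$), so the curvature term of $u_k$ converges to that of $h$ as $k\to\infty$. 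With weak lower semicontinuity of $u\mapsto\int_M|\nabla u|^2$ this gives $\mathbf{a}(h)\le\liminf_k\mathbf{a}(u_k)=\Lambda$. By Fatou and a.e.\ convergence $\|h\|_2^2\le\liminf_k\|u_k\|_2^2=1$, while $\mathbf{a}(h)\le\Lambda<0=\mathbf{a}(0)$ forces $h\neq0$; since $t:=\|h\|_2\in(0,1]$, $\mathbf{a}$ is homogeneous of degree $2$, and $\mathbf{a}(h)<0$, the rescaling $\hat h:=h/t\in\mathbb{S}^1\cap\mathbb{H}^1_{\mathrm{sym}}$ satisfies $\mathbf{a}(\hat h)=t^{-2}\mathbf{a}(h)\le\mathbf{a}(h)\le\Lambda$, whence $\mathbf{a}(\hat h)=\Lambda$ and $t=1$. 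Thus $h$ minimises $\mathbf{a}$ on $\mathbb{S}^1\cap\mathbb{H}^1_{\mathrm{sym}}$.

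To conclude, I would run the Euler--Lagrange computation of Subsection \ref{var} against test tensors $f$: one has $\frac{d}{ds}\rvert_{s=0}\mathbf{a}(h+sf)=2\big(\int_M\langle\nabla h,\nabla f\rangle-2R^{\mu\alpha\beta\nu}h_{\alpha\beta}f_{\mu\nu}\big)$ and $\frac{d}{ds}\rvert_{s=0}\|h+sf\|_2^2=2\int_M\langle h,f\rangle$, so the Lagrange multiplier rule on the constraint $\|u\|_2=1$ gives $\int_M\langle\nabla h,\nabla f\rangle-2R^{\mu\alpha\beta\nu}h_{\alpha\beta}f_{\mu\nu}=\mu\int_M\langle h,f\rangle$ for all diagonal radial test tensors $f$, with $\mu=\mathbf{a}(h)=\Lambda<0$ on taking $f=h$; set $\lambda:=-\mu>0$. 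Since the Christoffel symbols (\ref{Christ}) show that $\Delta^L$ maps diagonal radial tensors to diagonal radial tensors, and since a diagonal radial tensor that pairs to zero against all diagonal radial test tensors must vanish, this identity (equivalently $\Delta^L h-\mu h=0$) upgrades to hold for every $f\in C^\infty_c(\mathrm{Sym}^2(T^*M))$ — this is the principle of symmetric criticality in the present concrete form. Hence $h$ is a non-zero, diagonal, radially symmetric weak solution of $\Delta^L h=\lambda h$ with $\lambda>0$.

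The main obstacle is the loss of compactness on the non-compact $M$: it appears in taking the limit of the curvature term and, more seriously, in ruling out escape of $L^2$-mass to infinity, which is exactly what one needs for the weak limit to remain on the unit sphere $\mathbb{S}^1$. The first is handled by the $O((r+1)^{-3})$ decay of the curvature together with Rellich on the bounded pieces $M_R$; the second works precisely because $\inf\mathbf{a}<0$ by Theorem \ref{<0}, which both forces the weak limit $h$ to be non-zero and, via the degree-$2$ homogeneity of $\mathbf{a}$, promotes $\|h\|_2\le1$ to $\|h\|_2=1$. This is the step where linear instability — a negative value of $\mathbf{a}$ — is genuinely used in the existence argument.
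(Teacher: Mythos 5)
Your proposal is correct and follows essentially the same route as the paper: a direct-method minimisation of $\mathbf{a}$ over the unit $L^2$ sphere, with boundedness from the curvature decay, Rellich--Kondrachov on the exhaustion $M_R$ plus the $O((r+1)^{-3})$ tail bound for compactness of the curvature term, weak lower semicontinuity for the gradient term, and Theorem \ref{<0} supplying the negativity that rules out both triviality and loss of $L^2$ mass. The additional care you take --- working in the closed symmetric subspace and invoking symmetric criticality to upgrade to all test tensors, the homogeneity argument forcing $\lVert h\rVert_2=1$ exactly, and the explicit Lagrange multiplier identification $\lambda=-\mathbf{a}(h)>0$ --- fills in steps the paper's own proof leaves implicit.
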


\begin{proof}
	Using the curvature components (\ref{R's}), it can be checked that there is a constant $C>0$ such that
	\begin{equation*}
		\left|R_{\mu \alpha \beta \nu}\sqrt{g^{\mu \mu}g^{\alpha \alpha}g^{\beta \beta}g^{\nu \nu}}\right| \leq \frac{C}{(r+1)^3},
	\end{equation*} 
	for all $\mu, \alpha, \beta, \nu$. Therefore, 
	\begin{equation*}
		\begin{split}
			\left|R^{\mu \alpha \beta \nu}h_{\alpha \beta}h_{\mu \nu} \right|&= \left|R_{\mu \alpha \beta \nu} g^{\mu \mu}g^{\alpha \alpha}g^{\beta \beta}g^{\nu \nu} h_{\alpha \beta}h_{\mu \nu}\right|\\
			&= \left|R_{\mu \alpha \beta \nu}\sqrt{g^{\mu \mu}g^{\alpha \alpha}g^{\beta \beta}g^{\nu \nu}} (\sqrt{g^{\alpha \alpha}g^{\beta \beta}} h_{\alpha \beta})(\sqrt{g^{\mu \mu}g^{\nu \nu}}h_{\mu \nu})\right|\\
			&\leq \frac{C}{2(r+1)^3}(g^{\alpha \alpha}g^{\beta \beta}h_{\alpha \beta}^2+ g^{\mu \mu}g^{\nu \nu}h_{\mu \nu}^2)\\
			&= \frac{C \lvert h \rvert ^2}{(r+1)^3}\\
			&\leq C\lvert h \rvert^2.
		\end{split}
	\end{equation*}
	Thus $\inf_{h \in \mathbb{S}^1} \bold{a}(h)>-\infty$. Take a sequence $h^{(n)}\in \mathbb{S}^1$ such that 
	\begin{equation*}
		\lim_{n \rightarrow \infty} \bold{a}(h^{(n)})= \inf_{\overline{h}\in \mathbb{S}^1} \bold{a}\left(\overline{h}\right). 
	\end{equation*}
	By Theorem \ref{<0}, we can assume $\bold{a}(h^{(n)})<0$ for all $n$. Then
	\begin{equation*}
		\begin{split}
			0> \bold{a}(h^{(n)})&= \int_{M} \lvert \nabla h ^{(n)} \rvert^2-2R^{\mu \alpha \beta \nu}h^{(n)}_{\alpha \beta}h^{(n)}_{\mu \nu} \\
			&\geq \int_{M} \lvert \nabla h ^{(n)} \rvert ^2 - 2C \int_{M} \frac{\lvert h^{(n)} \rvert^2}{(r+1)^{3}}\\
			&\geq \int_{M} \lvert \nabla h ^{(n)} \rvert ^2- 2C \int_{M} \lvert h^{(n)} \rvert^2.
		\end{split}
	\end{equation*}
	Therefore $h^{(n)}$ is bounded in $\mathbb{H}^{1}$. Since $\mathbb{H}^1$ is a Hilbert space the unit ball in $\mathbb{H}^1$ is weakly compact and so, after passing to a subsequence, $h^{(n)}$ is weakly convergent to some $h \in \mathbb{H}^{1}$ with $\liminf_{n \rightarrow \infty} \lVert h^{(n)} \rVert _{\mathbb{H}^{1}} \geq \lVert h \rVert_{\mathbb{H}^{1}}$. 
	
	On any compact set $M_{R} \coloneqq M \cap \{r\leq R\}$ with $R>0$, the norms $\left(\bigintsss_{M_{R}} \lvert \tilde{h} \rvert^2\right)^{\frac{1}{2}}$ and $\left(\bigintsss_{M_{R}}  \frac{\lvert\tilde{h} \rvert^2}{(r+1)^2}\right)^{\frac{1}{2}}$ are equivalent. Hence, using Rellich-Kondrachov, for any $R>0$, the boundness of the $\lVert h^{(n)} \rVert_{\mathbb{H}^{1}, M_{R}}$ implies that after passing to a further subsequence, $h^{(n)}$ is strongly convergent in $L^{2}(M_{R})$. Thus, as $n\rightarrow \infty$,  
	\begin{equation}\label{Rlimit}
		\int_{M_{R}} \lvert h^{(n)} \rvert ^2 \rightarrow \int_{M_{R}} \lvert h \rvert^2, \int_{M_{R}} R^{\mu \alpha \beta \nu}h^{(n)}_{\alpha \beta}h^{(n)}_{\mu \nu} \rightarrow \int_{M_{R}} R^{\mu \alpha \beta \nu}h_{\alpha \beta}h_{\mu \nu}.
	\end{equation}
	 Let $\epsilon>0$. Since $\liminf_{n \rightarrow \infty} \lVert h^{(n)} \rVert_{\mathbb{H}^{1}} \geq \lVert h \rVert_{\mathbb{H}^{1}}$,
	\begin{equation*}
		\int_{M} \lvert \nabla h^{(n)} \rvert^2+ \int_{M} \frac{\lvert h^{(n)} \rvert^2}{(r+1)^2} + \epsilon \geq \int_{M} \lvert \nabla h \rvert^2+ \int_{M} \frac{\lvert h \rvert^2}{(r+1)^2},
	\end{equation*}
	for $n$ sufficiently large. Choose $R>0$ large enough so that 
	\begin{equation}\label{epsilon}
		\int_{M\setminus M_{R}} \frac{\lvert h^{(n)} \rvert^2}{(r+1)^2} \leq  \frac{\lVert h^{(n)} \rVert_{2}^2}{(R+1)^2}\leq \epsilon , \text{ } \int_{M\setminus M_{R}} \frac{\lvert h \rvert^2}{(r+1)^2} \leq  \frac{\lVert h \rVert_{2}^2}{(R+1)^2}\leq \epsilon.
	\end{equation}
	It follows that
	\begin{equation*}
		\int_{M} \lvert \nabla h^{(n)} \rvert^2+ \int_{M_{R}} \frac{\lvert h^{(n)} \rvert^2}{(r+1)^2} + 3\epsilon \geq \int_{M} \lvert \nabla h \rvert^2+ \int_{M_{R}} \frac{\lvert h \rvert^2}{(r+1)^2}.
	\end{equation*}
	Therefore, using (\ref{Rlimit}),
	\begin{equation*}
		\liminf_{n \rightarrow \infty} \int_{M} \lvert \nabla h^{(n)} \rvert^2+ 4\epsilon \geq \int_{M} \lvert \nabla h \rvert^2,
	\end{equation*}
	for all $\epsilon>0$. So 
	\begin{equation}\label{nablabound}
		\liminf_{n \rightarrow \infty} \int_{M} \lvert \nabla h^{(n)} \rvert^2 \geq \int_{M} \lvert \nabla h \rvert^2.
	\end{equation}
	Combining by (\ref{nablabound}) and (\ref{Rlimit}) yields $\bold{a}(h) \leq \lim_{n \rightarrow \infty} \bold{a}(h^{(n)})<0$.
	  By a standard diagonal argument, we can assume that for every $R>0$, $h^{(n)}$ converges strongly in $L^2(M_R)$. 
		 From the strong convergent on $M_R$, it is clear that $h$ is also diagonal, radially symmetric, and traceless. We now show that it is divergence-free by showing that $\nabla h^{(n)}$ converges strongly to $\nabla h$ in $L^2(M)$. 
	
	 Let $\epsilon>0$. We continue with the assumption that for every $R>0$, $h^{(n)}$ converges strongly in $L^2(M_R)$. For $R(\epsilon)=R>0$ large enough, we have (\ref{epsilon}). So
	 \begin{align*}
	\int_M |\nabla h^{(n)}|^2-|\nabla h|^2&=
	\bold{a}(h^{(n)})-\bold{a}(h)+\int_{M_R} 2R^{\mu \alpha \beta \nu}h_{\alpha \beta}h_{\mu \nu}-2R^{\mu \alpha \beta \nu}h^{(n)}_{\alpha \beta}h^{(n)}_{\mu \nu}\\
	& \hspace{4mm }+\int_{M\setminus M_R} 2R^{\mu \alpha \beta \nu}h_{\alpha \beta}h_{\mu \nu}-2R^{\mu \alpha \beta \nu}h^{(n)}_{\alpha \beta}h^{(n)}_{\mu \nu}.
	\end{align*}
	Since $\bold{a}(h^{(n)})\rightarrow \bold{a}(h)$ as $n \rightarrow \infty$ and (\ref{Rlimit}), it follows that for $n$ large enough, 
	$$\left|\int_M |\nabla h^{(n)}|^2-|\nabla h|^2\right| \leq 3\epsilon.$$
	Therefore $|\nabla h^{(n)}| \rightarrow |\nabla h|$ strongly in $L^2(M)$. By the weak convergence $h^{(n)} \rightarrow h$ in $\mathbb{H}^1$, 
	\begin{align*}
	\lim_{n\rightarrow \infty}||\nabla h^{(n)}-\nabla h||_{L^2(M)}^2&= \lim_{n \rightarrow \infty} ||\nabla h^{(n)}||^2_{L^2(M)}+ ||\nabla h||_{L^2(M)}^2- 2\langle \nabla h^{(n)}, \nabla h \rangle \\
	&= \lim_{n \rightarrow \infty} ||\nabla h^{(n)}||^2_{L^2(M)}- ||\nabla h||_{L^2(M)}^2 \\
	&= 0.
	\end{align*}
\end{proof}

\subsection{Properties of $h$}
The following lemma will be needed to prove that $h\in W^{m,2}, L^{m,\infty}, C^{\infty}$. It will also be a vital ingredient in the proof of Theorem \ref{mainthm} in the next section.

\begin{lemma}\cite{Aub}\label{sobelevineq}
	Let $(M',g)$ be a four dimensional Riemannian manifold with  bounded curvature by $K>0$ and positive injectivity radius $\rho>0$.
	If $|k|_g \in W^{m+3,2}(M',g)$, then there is $C=C(m, K, \rho)>0$ such that
	$$|\nabla^m k|_g\leq C ||k||_{W^{m+3,2}(M',g)}.$$
\end{lemma}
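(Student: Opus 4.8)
The plan is to reduce to the case $m=0$ and then extract a uniform Sobolev embedding from the bounded geometry hypothesis. Interpreting the hypothesis as $k\in W^{m+3,2}(M',g)$ in the intrinsic sense $\|k\|_{W^{j,2}}^2=\sum_{i=0}^{j}\int_{M'}|\nabla^i k|_g^2$, and setting $l:=\nabla^m k$, one has $\|l\|_{W^{3,2}(M',g)}\le\|k\|_{W^{m+3,2}(M',g)}$. So it suffices to prove that for any tensor field $l$ on $(M',g)$, $\sup_{M'}|l|_g\le C(K,\rho)\,\|l\|_{W^{3,2}(M',g)}$. In dimension $n=4$ the exponent $3$ is the least integer with $3>n/2$, so this is exactly the borderline embedding $W^{3,2}\hookrightarrow L^\infty$; the content of the lemma is that its constant can be taken uniform once $|\mathrm{Rm}|_g\le K$ and $\mathrm{inj}(M',g)\ge\rho$.

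First I would fix a good cover. Comparison geometry shows that under these two bounds, on every geodesic ball $B(p,\rho/2)$ the exponential chart is a diffeomorphism in which $c^{-1}\delta_{ij}\le g_{ij}\le c\,\delta_{ij}$ and in which the Christoffel symbols and their first coordinate derivatives are bounded by constants depending only on $K,\rho$ (using $\partial\Gamma\sim\partial^2 g\sim\mathrm{Rm}$); moreover one can choose a cover $\{B(p_i,\rho/4)\}$ of $M'$ with $\{B(p_i,\rho/2)\}$ of multiplicity bounded by $N=N(K,\rho)$. Equivalently one may work in harmonic coordinates, where $g$ is uniformly $C^{1,\alpha}$. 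In such a chart $|l|_g$ is comparable to the Euclidean norm of the coordinate components of $l$, and the scalar Sobolev inequalities $W^{1,2}\hookrightarrow L^4$ and $W^{1,p}\hookrightarrow C^0$ for $p>4$ hold on $B(p_i,\rho/2)$ with constants depending only on $K,\rho$ — this is precisely the functional statement proved in \cite{Aub}.

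Next I would run the estimate chartwise, routing the tensor bound through scalar functions via the Kato inequality $|\nabla\,|T|\,|\le|\nabla T|$. Applying this to $T=\nabla^2 l,\ \nabla l,\ l$ shows that $|\nabla^2 l|$, $|\nabla l|$, $|l|$ all lie in $W^{1,2}(M',g)$ with norms controlled by $\|l\|_{W^{3,2}}$; hence all three are in $L^4$ by the scalar Sobolev inequality. Then $|\nabla l|$ has gradient bounded by $|\nabla^2 l|\in L^4$, so $|\nabla l|\in W^{1,4}\hookrightarrow L^8$; then $|l|$ has gradient bounded by $|\nabla l|\in L^8$ and $|l|\in L^4$, so on each ball $\||l|\|_{L^\infty(B(p_i,\rho/4))}\le C(K,\rho)\,\big(\|\nabla|l|\|_{L^8(B(p_i,\rho/2))}+\|\,|l|\,\|_{L^4(B(p_i,\rho/2))}\big)\le C(K,\rho)\,\|l\|_{W^{3,2}(B(p_i,\rho/2))}$. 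Taking the supremum over $i$ and using the bounded multiplicity of the cover gives $\sup_{M'}|l|_g\le C(K,\rho)\,\|l\|_{W^{3,2}(M',g)}$, and unwinding $l=\nabla^m k$ yields the lemma. Alternatively, if a few covariant derivatives of curvature are also bounded — which is automatic in our application, where $M'$ will be an annulus in $(\mathbb{CP}^2\setminus\{\mathrm{pt}\},g_{\mathrm{Bolt}})$ with $|\nabla^j\mathrm{Rm}|=O(r^{-3-j})$ — the conversion between covariant and coordinate derivatives is exact up to order $3$ and one simply applies the Euclidean embedding $W^{3,2}(B)\hookrightarrow L^\infty(B)$ componentwise.

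The main obstacle is exactly this last point: with only $|\mathrm{Rm}|_g\le K$ and no bound on $\nabla\mathrm{Rm}$, the identity relating $\nabla^3 l$ to $\partial^3 l$ formally involves $\partial^2\Gamma\sim\nabla\mathrm{Rm}$, so the top-order $W^{3,2}$ norm cannot be transferred to coordinates naively; this is why the argument must be pushed through the Kato inequality and the purely functional Sobolev embedding (which needs only the $C^0$-control of $g$, hence only $|\mathrm{Rm}|_g\le K$ and $\mathrm{inj}\ge\rho$) rather than through componentwise Euclidean estimates. Everything else is bookkeeping: choosing the cover, patching the chartwise bounds, and tracking the dependence of the constant on $K,\rho$ and $m$.
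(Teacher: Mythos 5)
The paper gives no proof of this lemma at all: it is imported verbatim from \cite{Aub}, so there is no internal argument to compare against. Your write-up is a reconstruction of the standard bounded-geometry Sobolev embedding, and the strategy is sound: reduce to $m=0$ via $l=\nabla^m k$, cover $M'$ by geodesic balls that are uniformly bi-Lipschitz to Euclidean balls (which needs only $|\mathrm{Rm}|\le K$ and $\mathrm{inj}\ge\rho$), route the tensor estimate through Kato's inequality so that only scalar first-order embeddings $W^{1,2}\hookrightarrow L^4$, $W^{1,4}\hookrightarrow L^8$, $W^{1,8}\hookrightarrow L^\infty$ are invoked, and bootstrap. This is essentially how such theorems are proved in Aubin and in Hebey's book, and you correctly isolate the real subtlety, namely that the top-order covariant derivatives cannot be converted to coordinate derivatives without bounds on $\nabla\mathrm{Rm}$. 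One assertion in your chart construction is inaccurate, though not load-bearing: a bound $|\mathrm{Rm}|_g\le K$ does \emph{not} give $L^\infty$ bounds on the first coordinate derivatives of the Christoffel symbols. The curvature only controls the antisymmetrized combination $\partial_i\Gamma^l_{jk}-\partial_j\Gamma^l_{ik}$ (up to $\Gamma*\Gamma$); in harmonic coordinates one gets $g\in C^{1,\alpha}\cap W^{2,p}_{\mathrm{loc}}$, hence $\Gamma$ bounded but $\partial\Gamma$ only in $L^p$. Since your main line of argument uses only the $C^0$-equivalence of $g$ with the Euclidean metric in each chart (for the volume form, for $|{\cdot}|_g$ versus the Euclidean norm of components, and for the scalar Sobolev constants), this slip does not affect the proof; it only undermines the ``alternative'' componentwise route you sketch at the end, which you already flag as requiring extra curvature-derivative bounds.
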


 \begin{remark}
  Since $M$ is four dimensional, Sobelev inequalities can only bound the $C^m$ norm of a tensor by its $W^{m+3, 2}$ norm. This differs from \cite{Tak1}, where the inequality \cite[Proposition 4.1]{Tak1} is used, which bounds the $C^m$ norm of a tensor by its $W^{m+1, 2}$ norm. This is main place at which our proof of Theorem \ref{mainthm} and Takahashi's construction of an ancient solution coming out of the Euclidean Schwarzschild metric \cite{Tak1} diverge.
 \end{remark}
We note that $(M, g_{\mathrm{Bolt}})$ has bounded curvature and positive injectivity radius.

\begin{lemma}
	$h \in W^{m,2}, L^{m,\infty}, C^\infty$ for all $m \geq 0$. 
\end{lemma}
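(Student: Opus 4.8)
The plan is to bootstrap from the fact, established in Theorem \ref{h}, that $h\in\mathbb{H}^1$ is a weak solution of $\Delta^L h=\lambda h$ with $\lambda>0$, using elliptic regularity and the Sobolev inequality of Lemma \ref{sobelevineq}. First I would rewrite the eigenvalue equation in the form $\Delta h = \lambda h - 2R^{\cdot\cdot\cdot\cdot}*h$, i.e. $\Delta_{g_{\mathrm{Bolt}}} h = F$ where $F = \lambda h - 2(\mathrm{Rm})*h$ is, by the curvature decay $|R^{\mu\alpha\beta\nu}|\le C(r+1)^{-3}$ from Lemma \ref{propofTB}(i) and the definition of $\mathbb{H}^1$, controlled pointwise by $C|h|$ and hence lies in $L^2(M,g_{\mathrm{Bolt}})$. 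Since $h\in\mathbb{H}^1\subset L^2_{\mathrm{loc}}$ already has one derivative in $L^2$, local interior elliptic $L^2$-estimates for the (elliptic) operator $\Delta_{g_{\mathrm{Bolt}}}$ acting on symmetric $2$-tensors give $h\in W^{2,2}_{\mathrm{loc}}$, and then a standard iteration — differentiating the equation, using that all covariant derivatives of the curvature of $g_{\mathrm{Bolt}}$ are bounded (visible from (\ref{R's})), and feeding the improved regularity of $h$ back into the right-hand side $F$ — yields $h\in W^{m,2}_{\mathrm{loc}}$ for every $m$, and therefore $h\in C^\infty(\mathrm{Sym}^2(T^*M))$ by local Sobolev embedding.

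The substantive point is to upgrade these \emph{local} statements to \emph{global} ones, i.e. to show $\|h\|_{W^{m,2}(M,g_{\mathrm{Bolt}})}<\infty$ for all $m$. I would argue by induction on $m$. The base case $m=0$ holds since $\|h\|_2\le 1$; the step from $W^{m,2}$ to $W^{m+1,2}$ comes from integrating the eigenvalue equation against $\Delta^m h$ (or, more carefully, using cutoff functions $\chi_R$ supported in $\{r\le 2R\}$ with $\chi_R\equiv 1$ on $\{r\le R\}$ and letting $R\to\infty$, with the boundary terms vanishing exactly as in (\ref{because}) because $h$ and its derivatives are in $L^2$): the Bochner-type identity and the curvature bounds convert $\int|\nabla^{m+1}h|^2$ into a sum of terms of the form $\int \langle \Delta^L h, \nabla\text{-stuff}\rangle$ plus lower-order curvature contractions, all bounded by $\int |\nabla^{\le m}h|^2$, which is finite by the inductive hypothesis. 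This gives $h\in W^{m,2}(M,g_{\mathrm{Bolt}})$ for all $m$. Once this global Sobolev control is in hand, Lemma \ref{sobelevineq} applies directly — $(M,g_{\mathrm{Bolt}})$ has bounded curvature and positive injectivity radius, as noted — and for each $m\ge 0$ gives $|\nabla^m h|_{g_{\mathrm{Bolt}}}\le C\|h\|_{W^{m+3,2}}<\infty$, i.e. $\nabla^m h\in L^\infty$; in particular $h\in L^{m,\infty}$ for all $m$.

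The step I expect to be the main obstacle is the global $W^{m,2}$ bound rather than the local regularity, which is routine. The delicacy is twofold: one must justify the integration by parts on the non-compact $M$ (handled by the cutoff argument and the decay of boundary terms, which is legitimate because at each stage the quantities being integrated are already known to be in $L^2$ from the previous stage, so one is never begging the question), and one must keep careful track of the commutator terms $[\nabla^m,\Delta^L]h$, which involve $\nabla^{\le m}\mathrm{Rm}$ contracted with $\nabla^{\le m}h$ — these are harmless precisely because every covariant derivative of the curvature of $g_{\mathrm{Bolt}}$ is a bounded function on $M$ (indeed decaying like a negative power of $r$), which one reads off from the explicit formulae (\ref{R's}). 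With those two points dispatched, the conclusion $h\in W^{m,2}\cap L^{m,\infty}\cap C^\infty$ for all $m\ge 0$ follows.
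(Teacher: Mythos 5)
Your argument is essentially the paper's own: bootstrap the weak solution of $\Delta^L h=\lambda h$ via elliptic $L^2$-regularity to get $h\in W^{m,2}$ for all $m$ (the paper simply cites \cite[Section 6.3]{Evans} for the estimate $\|h\|_{W^{m+2,2}}\leq C(m)(\|h\|_{W^{m,2}}+\|h\|_{L^2})$ and inducts), concluding smoothness, and then apply Lemma \ref{sobelevineq} to obtain the $L^{m,\infty}$ bounds. Your extra care with the cutoff/commutator argument just fills in the global (non-compact) justification that the paper leaves to the citation, using the bounded geometry of $(M,g_{\mathrm{Bolt}})$ exactly as intended.
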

\begin{proof}
	Since $h\in W^{1,2}$ is a weak solution to $\Delta_L h=\lambda h$, \cite[Section 6.3]{Evans} shows that if $h \in W^{m,2}$  then 
	$$||h||_{W^{m+2,2}} \leq C(m) (||h||_{W^{m,2}} +||h||_{L^2}).$$
	Therefore an inductive argument proves $h \in W^{m,2}$ for all $m\geq0$, meaning we also have that $h$ is smooth.
	Lemma \ref{sobelevineq} shows that $h\in L^{m,\infty}$ for all $m\geq 0$.  
\end{proof}
Now we show that the norms $h$ and its derivatives decay exponential fast in $r$. The technique used to prove this will be that of a frequency function. 
\begin{proposition}\label{boundsonh}
	For every $m\geq 0$ there is a $C=C(m), \alpha=\alpha(m)>0$ such that 
	$$|\nabla^m h|(r)\leq Ce^{-\alpha r}.$$
\end{proposition}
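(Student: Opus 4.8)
The plan is to exploit the fact that, because $h$ is diagonal and radially symmetric, the function $I(r) = |h|^2(r)\,\nu(r)$ (with $\nu(r) = \sqrt{\det(g_{\mathrm{Bolt}})_{ij}}$) obeys a second-order ODE derived from the eigenvalue equation $\Delta^L h = \lambda h$. First I would write out the weak equation $\int_M \langle \nabla h, \nabla f\rangle - 2R^{\mu\alpha\beta\nu}h_{\alpha\beta}f_{\mu\nu} = 0$ for radial test functions $f$ and integrate by parts (legitimate now that we know $h \in C^\infty \cap W^{m,2}$ for all $m$) to get the pointwise identity $\Delta h + 2\mathrm{Rm}\ast h = \lambda h$. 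Contracting with $h$ and using Kato-type manipulations yields a differential inequality for $|h|^2$: schematically $\tfrac12\Delta |h|^2 \geq \langle \Delta h, h\rangle = \lambda |h|^2 - 2(\mathrm{Rm}\ast h, h) \geq (\lambda - C r^{-3})|h|^2$, using the curvature bound $|R^{\mu\alpha\beta\nu}| \leq C(r+1)^{-3}$ from the excerpt. Rewriting $\Delta$ acting on a radial function in terms of $r$ gives $\frac{1}{\nu}\partial_r(\nu \,\partial_r |h|^2) \geq 2(\lambda - Cr^{-3})|h|^2$, which in terms of $I$ becomes a one-dimensional inequality.

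Next I would introduce the frequency function $U(r) = \tfrac12(\log I)' + \tfrac{\partial_r\nu}{2\nu}$ mentioned in the introduction and derive a Riccati-type differential inequality for it. Differentiating $U$ and feeding in the differential inequality for $I$ should produce $U' \geq U^2 + (\text{lower order}) + 2\lambda - o(1)$ as $r \to \infty$ — more precisely $U' + (\text{bounded})U \geq 2\lambda - Cr^{-3} - (\text{geometric terms decaying like } r^{-2})$. The key qualitative output is that $U$ cannot stay near $0$: either $U \to +\infty$ in finite $r$ (impossible since $h \in L^2$ forces $I \in L^1$, hence $I$ and $U$ cannot blow up), or $U$ is eventually bounded above by a negative constant. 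A maximum-principle / barrier argument on the ODE for $U$ pins this down: for $r$ large enough, $U(r) \leq -\beta$ for some $\beta$ comparable to $\sqrt\lambda$. Since $\frac{\partial_r\nu}{2\nu}(r) = \frac{\partial_r(b)}{2b}(r) + \tfrac12\frac{\partial_r(\sqrt{bc})'}{\cdots} \to 0$ as $r \to \infty$ (from the explicit $a,b,c$, $\frac{\partial_r\nu}{2\nu} \sim 3/(2r)$), this gives $(\log I)'(r) \leq -2\beta + o(1)$, so $I(r) \leq C e^{-\beta r}$, i.e. $|h|^2(r)\nu(r) \leq Ce^{-\beta r}$; since $\nu(r)$ grows only polynomially ($\nu \sim \mathrm{const}\cdot r^2$), we conclude $|h|(r) \leq C e^{-\alpha r}$ for some $\alpha > 0$. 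This handles the $m = 0$ case.

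For the derivative bounds $|\nabla^m h|(r) \leq C(m)e^{-\alpha(m) r}$, I would bootstrap: by Lemma~\ref{sobelevineq}, $|\nabla^m h|$ on a unit ball $B_1(p)$ centered at a point with radial coordinate $r$ is controlled by $\|h\|_{W^{m+3,2}(B_1(p))}$; interior elliptic estimates for the equation $\Delta^L h = \lambda h$ (as in \cite[Section 6.3]{Evans}, applied on $B_2(p)$) give $\|h\|_{W^{m+3,2}(B_1(p))} \leq C\|h\|_{L^2(B_2(p))}$ with $C$ uniform in $p$ because $(M, g_{\mathrm{Bolt}})$ has bounded geometry. Then $\|h\|_{L^2(B_2(p))}^2 \lesssim \sup_{B_2(p)} |h|^2 \cdot \mathrm{Vol}(B_2(p)) \lesssim e^{-2\alpha(r-2)}$ by the $m=0$ bound (and bounded volume of unit balls), so $|\nabla^m h|(r) \leq C(m) e^{-\alpha r}$ after adjusting constants.

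\textbf{Main obstacle.} The delicate part is establishing the differential inequality for the frequency function $U$ with the right sign and making the barrier argument rigorous — in particular ruling out that $U$ oscillates or that the lower-order geometric terms (which decay only like $r^{-1}$ or $r^{-2}$, slower than one might like relative to the $r^{-3}$ curvature term) spoil the eventual negativity. One must choose $R_0$ large enough that on $[R_0,\infty)$ the inequality $U' \geq U^2 + 2\lambda - \varepsilon$ genuinely holds, then argue by contradiction: if $U(R_0) > -\sqrt{2\lambda - \varepsilon}$ the Riccati comparison forces finite-time blowup of $U$, contradicting $I \in L^1$; hence $U(R_0) \leq -\sqrt{2\lambda - \varepsilon}$ and the same comparison keeps $U$ below this value for all $r \geq R_0$. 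Verifying the precise form of the geometric terms $\tfrac{\partial_r\nu}{2\nu}$ and their derivatives from the explicit formulas for $a, b, c$ is routine but must be done carefully to confirm they are $o(1)$ and do not interfere with the sign.
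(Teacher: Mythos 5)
Your strategy for the $m=0$ case is essentially the paper's: form $I(r)=|h|^2\nu$, pass to a frequency function, derive a Riccati-type inequality, and use $h\in L^2$ (equivalently $I\in L^1(dr)$) to exclude the branch on which $U$ is eventually non-negative, leaving $U\leq-\beta$ with $\beta$ comparable to $\sqrt{\lambda}$ and hence exponential decay of $I$. However, the Riccati inequality you write down has the wrong sign on the quadratic term. From $\frac{1}{\nu}\partial_r(\nu\,\partial_r|h|^2)\geq 2(\lambda-Cr^{-3})|h|^2$ and $U\approx\tfrac12\,\partial_r\log|h|^2$ one computes $U'=\frac{\partial_r^2|h|^2}{2|h|^2}-2U^2$, so the inequality is $U'\geq \lambda-2U^2-\frac{\partial_r\nu}{\nu}U-Cr^{-3}$, with $-2U^2$, not $+U^2$. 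With your version $U'\geq U^2+2\lambda-\varepsilon$, every solution blows up in finite $r$ regardless of $U(R_0)$, so the conditional ``if $U(R_0)>-\sqrt{2\lambda-\varepsilon}$ then blowup'' establishes nothing, and the concluding step ``the same comparison keeps $U$ below this value'' is false for a quantity whose derivative is bounded below by $2\lambda-\varepsilon>0$. The correct mechanism (the one the paper uses) is: with $U'\geq c-2U^2-o(1)\,U$, if $U$ ever exceeds $-\sqrt{c/2}$ then comparison with the constant solution $\sqrt{c/2}$ forces $U$ to become and remain positive and bounded away from $0$, so $|h|^2$ grows exponentially, contradicting $h\in L^2$; hence $U\leq-\sqrt{c/2}$ for all large $r$. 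This is a local, fixable slip, but as written your barrier argument does not go through.

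For $m\geq 1$ you genuinely diverge from the paper: you bootstrap via interior elliptic estimates for $\Delta^L h=\lambda h$ on balls of uniformly bounded geometry together with Lemma \ref{sobelevineq}, bounding $|\nabla^m h|(p)$ by $\lVert h\rVert_{L^2(B_2(p))}\lesssim e^{-\alpha r(p)}$. The paper instead exploits the radial structure: $U\leq-\sqrt{\lambda/8}$ controls $\partial_r|h|^2$, and reading $\Delta^L h=\lambda h$ as a radial ODE gives decay of higher $r$-derivatives by induction, with radial symmetry converting these into bounds on $|\nabla^m h|$. Your route is more standard and does not lean on radial symmetry for the derivative bounds; it does require (and you correctly invoke) the uniform curvature, injectivity radius and unit-ball volume bounds for $g_{\mathrm{Bolt}}$, all of which hold. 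Either route is acceptable once the Riccati sign is repaired.
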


\begin{proof}
	Set $\nu\coloneqq 16(r+3)(r+1)$. Then $dVol_{g_{\mathrm{Bolt}}}= \nu dr \wedge \sigma_{1} \wedge \sigma_{2}, \wedge \sigma_{3}$. 
	Define for $r>0$,
	\begin{align*}
	I(r)&= |h|^2 \nu,\\
	 D(r)&= \int_{0}^{r} [|\nabla h |^2+\langle \Delta_L h, h \rangle- 2R^{\mu \alpha \beta \nu}h_{\alpha \beta}h_{\mu \nu}] \nu d\tilde{r}= \langle \nabla_{\partial_r}h, h \rangle \nu (r).
	\end{align*}
	Then 
	\begin{align*}
	I'(r)&=2|h|\partial_r |h| \nu + |h|^2\partial_r \nu = 2D+ \frac{\partial_r \nu}{\nu} I, \\
	D'(r)&= [|\nabla h |^2+\langle \Delta_L h, h \rangle- 2R^{\mu \alpha \beta \nu}h_{\alpha \beta}h_{\mu \nu}] \nu \geq \frac{\lambda}{2} |h|^2,
	\end{align*}
	for $r$ large enough since $|Rm|\rightarrow 0$ as $r\rightarrow \infty.$
	Define $$U \coloneqq\frac{D}{I}= \frac{ID'-I'D}{I^2} \geq \frac{\lambda}{2}-2U^2-\frac{\partial_{r} \nu}{\nu}U.$$
	Claim: $U$ is bounded above
	
	Proof of claim: Since $\nu $ is a polynomial in $r$, there exists $K_1>0$ such that $\frac{\partial_r \nu}{\nu} \leq K_1$ for large $r$. Let $K_2$ be the positive solution to $\frac{\lambda}{2}-2x^2
	-K_1x=0$. Suppose that $U(r) \geq K_2$ for some $r'$. Then $U'(r')\leq 0$. Thus, $U(r)\leq K_2$ for large $r$.
	
	Since $\frac{\partial_r \nu}{\nu}(r) \rightarrow 0$ as $r \rightarrow \infty$, 
	$$U'\geq \frac{\lambda}{4}-2U^2,$$
	for large enough, say $r>r_1$. 
	
	Suppose $\frac{\lambda}{2}-2U^2(r_2)>0$ for some $r_2>r_1$. Then 
	$$\frac{U'}{\frac{\lambda}{2}-2U^2(r_2)}\geq 1.$$ Therefore,
	$$\frac{\lambda}{2}-2U^2\geq Ae^r,$$
	for some $A>0$ and $r>r_2$. This is a contradiction for $r$ large enough. Thus, 
	$$U^2\geq \frac{\lambda}{8}.$$
	
	Suppose $U\geq \sqrt{\frac{\lambda}{8}}.$ Then $$\frac{\partial_{r} |h|^2}{2|h|^2}=\frac{\langle \nabla_{\partial_{r}} h, h \rangle}{|h|^2}=U \geq \sqrt{\frac{\lambda}{8}},$$ and so $|h|$ grows exponentially in $r$. This contradicts $h \in L^2$. Therefore, 
	$$U\leq -\sqrt{\frac{\lambda}{8}},$$
	for large $r$. This implies,
	$$\frac{I'}{I}\leq- \sqrt{\frac{\lambda}{2}}+\frac{\partial_{r} \nu}{\nu} \leq -\frac{1}{2}\sqrt{\frac{\lambda}{2}},$$
	for $r$ large enough. Hence,
	$$I\leq Be^{-\sqrt{\frac{\lambda}{2}}r},$$
	for $r$ large. Since $\nu$ has polynomial growth rate, it follows that $|h|(r)$ decays exponentially in $r$.
	
	Now we prove the exponential decay of the higher derivatives of $h$. Since $U\leq -\sqrt{\frac{\lambda}{8}}$, we have 
	$$\partial_{r} |h|^2 \leq -\sqrt{\frac{\lambda }{8}}|h|^2,$$
	and so
	$\partial_r |h|^2$ also decays exponentially in $r$. The equation $\Delta_L h= \lambda h$ implies, by induction, that all
	$$\frac{\partial^m}{\partial r^m} |h|,$$ 
	also decays exponentially for $m\geq 2$. Since $h$ is radially symmetric, $|\nabla^m h|$ decays exponentially for all $m\geq 0$. This completes the proof.	
\end{proof}

\section{Constructing the ancient solution}\label{construct}
The purpose of this section is to prove Theorem \ref{mainthm}, the existence of a non-trivial ancient solution emerging from the Taub-Bolt metric. The method of proof is inspired by the work of Takahashi \cite{Tak1}, who constructs an ancient solution coming out of the Euclidean Schwarzschild metric. 

To obtain such an ancient solution emerging from Taub-Bolt, we will prove the existence of a sequence of solutions $g^n(t)$ to the Ricci-de Turck flow equation defined for $t\in[t_n, N]$, where $\epsilon_n=e^{\lambda t_n}$ and $\epsilon_n \rightarrow 0$ as $n\rightarrow \infty$, $N$ is independent of $n$, and $$g^n(t_n)= g_{\mathrm{Bolt}}+\epsilon_n h.$$ The Arzelà–Ascoli  theorem combined with the $C^4$ bounds resulting from Theorem \ref{l^2long} and the Sobelev inequality \ref{sobelevineq} controlling our solution will yield a subsequence that converges to an ancient solution coming out of the Taub-Bolt metric. We will also check that this ancient solution is not the trivial stationary solution $g_{\mathrm{Bolt}}$.

Constructing the sequence $g^{\epsilon_n}$ will be achieved by controlling the tensor
$$w^n(t)\coloneqq g^n (t)-g_{\mathrm{Bolt}}-e^{\lambda t}h.$$
By the definition of $\epsilon_n$, $w^n(t_n)=0$. Furthermore, the time-evolution equation for $w$ is 
$$\partial_t w^n= \Delta^L_{g_\mathrm{Bolt}}w^n+ O((w^n)^2, e^{\lambda t}).$$
The definition of $\lambda$ and these two facts suggest that there will be a $C>0$, independent of $n$, such that 
$$||w^n||_2 =\left(\int_M | w^n|_{g_\mathrm{Bolt}}^2 dVol_{g_\mathrm{Bolt}}\right)^{\frac{1}{2}} \leq Ce^{\lambda t}.$$
In fact,  Theorem \ref{l^2long} will imply that 
there exists $M_{i,j},T>0$ and $N\leq 0$, all independent of $n$,  such that if 
\begin{equation}\label{introeq}
	\left|\left|\frac{d^j}{dt^j} \nabla^iw^n\right|\right|_2(t) \leq M_{i,j}\delta(t), \qquad i+j\leq 7,
\end{equation}
for all $t\in [t_0, t_0+(k-1)T]$ and $t_0+kT<N$, then $w$ exists for another time $T$ and 
$$\left|\left|\frac{d^j}{dt^j} \nabla^iw^n\right|\right|_2(t) \leq M_{i,j}\delta(t),\qquad i+j\leq 7,$$
for all $t\in[t_0, t_0+kT]$. 
The equation (\ref{introeq}) and the Sobelev inequalities of Lemma $\ref{sobelevineq}$ will give $C^3$ control of $w^n$ which will imply that the curvature of $g^n(t_n+(k-1)T)$ will satisfy
$$|Rm(g^n(t_n+(k-1)T))|_{g^n(t_n+(k-1)T)} \leq K,$$
for $K$ is independent of $k,n$. Standards results for Ricci flow show that $g^{\epsilon_n}(t)$ exists for another time $T$, for $T=T(K)>0$ chosen sufficiently small.

A brief outline of this section is as follows:
Section \ref{groundwork} will set the notation for the rest of Section \ref{construct}, prove a result which will allow us to show that the $H^m$ norms of $w_n$ are actually finite, and analyse the time-evolution equation for $w_n$.
Section \ref{C^mbounds} will prove the $C^m$ bounds of $w_n$ which will be used to control the terms not appearing in the linearisation $\partial_t w_n= \Delta_{\mathrm{Bolt}}^Lw_n$.
Section \ref{L^2bounds} will prove Theorem \ref{l^2long} stated informally above.
Section \ref{proofofmain} will prove Theorem \ref{mainthm}.

\subsection{The groundwork}\label{groundwork}

For the rest of Section \ref{construct},  set $g_0=g_{\mathrm{Bolt}}$. Also, very often we will use the notation that $C(A,B, ...)$  will denote a some constant $C>0$ depending on $A, B, ...$.
The following Theorem gives a lower bound on the maximal existence time of the Ricci-de Turck flow given a bound on the curvature of the initial condition, as well as bounds on the solution during this period of existence.

\begin{theorem}\label{shorttimeests}
	Suppose $g(0)$ satisfies $|Rm(g(0))|\leq K$ for some $K>0$. Then there exists a unique complete solution to the
	Ricci-de Turck equation 
	\begin{align*}
		&\partial_{t} g(t)= -2\text{Ric}(g(t))+ \mathcal{L}_{V(g(t),g(0))}g(t),
	\end{align*}
	where $V(g(t),g(0))$ is a time dependent vector field satisfying
	\begin{equation*}
		g(V(g(t),g(0))(t), \cdot)= -\text{tr}_{g(t)}\nabla^{g(t)} g_{0}-\frac{1}{2}\nabla^{g(t)} \text{tr}_{g(t)}g(0),
	\end{equation*}
	for $t\in[0, T]$ for some $T=T(K)>0$.
	Furthermore, for any $\rho>0$,  $T=T(\rho, K)$ can be made small enough so that 
	\begin{align*}
		&|g(t)-g(0)|_{g(0)}\leq \rho,\\
		&|\overline{\nabla} g |_{g(0)},
		\left|\frac{\partial g}{\partial t} \right|_{g(0)} \leq C
	\end{align*}
	for all $t\in[0,T]$ and some $C=C(K)$.
\end{theorem}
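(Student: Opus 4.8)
The plan is to reduce the Ricci–de Turck flow with background metric $g(0)$ to a strictly parabolic system and invoke the standard short-time existence and derivative-estimate machinery of Shi-type theory. First I would recall that, in local coordinates, the Ricci–de Turck operator can be rewritten as
\begin{equation*}
	\partial_t g_{ij} = g^{ab}\overline{\nabla}_a\overline{\nabla}_b g_{ij} + (\text{lower-order terms quadratic in } g^{-1}\ast\overline{\nabla}g),
\end{equation*}
where $\overline{\nabla}$ is the Levi-Civita connection of the fixed metric $g(0)$; this is the content of the de Turck trick, and it shows the flow is a quasilinear strictly parabolic system as long as $g(t)$ stays uniformly equivalent to $g(0)$. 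The curvature bound $|Rm(g(0))|\le K$ together with completeness of $g(0)$ (which here will be $g_{\mathrm{Bolt}}$, complete with bounded curvature and positive injectivity radius by Lemma \ref{propofTB} and the remark following Lemma \ref{sobelevineq}) is exactly the hypothesis under which Shi's existence theorem and its de Turck analogue (see \cite{Shi1}) give a complete solution on a time interval $[0,T]$ with $T = T(K) > 0$, along with interior estimates $|\overline{\nabla}^m g(t)| \le C(m,K) t^{-m/2}$ for $t \in (0,T]$; since $g(0)$ itself is smooth with all derivatives bounded in our application, these can be upgraded to estimates uniform down to $t=0$.

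For the quantitative closeness statements, I would argue as follows. The bound $|\partial_t g|_{g(0)} \le C(K)$ on a possibly shorter interval follows by combining the equation with the derivative estimates: $\partial_t g$ is controlled by $g^{-1}\ast\overline{\nabla}^2 g$ plus $g^{-1}\ast g^{-1}\ast\overline{\nabla}g\ast\overline{\nabla}g$, and once $g(t)$ is uniformly equivalent to $g(0)$ and $|\overline{\nabla}g|, |\overline{\nabla}^2 g|$ are bounded, this is bounded by $C(K)$. Integrating $|\partial_t g|_{g(0)} \le C(K)$ in time gives $|g(t) - g(0)|_{g(0)} \le C(K)\,t$, so choosing $T = T(\rho,K) = \min\{T(K), \rho/C(K)\}$ yields $|g(t)-g(0)|_{g(0)} \le \rho$ for all $t \in [0,T]$; this also retroactively guarantees the uniform equivalence of $g(t)$ and $g(0)$ that the parabolic estimates require, so the argument is not circular provided $\rho$ is taken small enough (say $\rho \le 1/2$) before the final choice of $T$. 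The bound $|\overline{\nabla}g|_{g(0)} \le C(K)$ is then just the $m=1$ interior estimate made uniform down to $t=0$ as above.

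The main obstacle is the bookkeeping needed to make the estimates \emph{uniform} rather than merely interior, and to ensure the constants $T, C$ depend only on $K$ (not on finer geometry of $g(0)$). Shi's estimates are naturally stated with a $t^{-m/2}$ blow-up as $t\to 0$; here we exploit that the initial metric is a fixed smooth metric with globally bounded geometry to absorb this, but one must be slightly careful that the de Turck term $\mathcal{L}_{V(g(t),g(0))}g(t)$, which involves $\overline{\nabla}g$ and the Christoffel symbols of $g(0)$, does not introduce dependence on more than $K$. Since $V^k = g^{ij}(\Gamma^k_{ij}(g(t)) - \Gamma^k_{ij}(g(0)))$ is a difference of Christoffel symbols, it is tensorial and controlled by $|g^{-1}| |\overline{\nabla}g|$, so this works out, but verifying it cleanly — and citing the precise form of the existence/estimate theorem (e.g. \cite{Shi1}) that applies to complete manifolds with bounded curvature and positive injectivity radius — is where the real care is required. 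The remaining steps are routine.
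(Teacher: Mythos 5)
The paper gives no proof of this theorem: it is stated as a known result, implicitly resting on Shi's short-time existence theory for the Ricci--de Turck flow on complete manifolds with bounded curvature (the paper's citation of \cite{Shi1} elsewhere makes clear this is the intended source). Your outline reconstructs exactly that standard argument, so there is nothing to fault in the approach. One refinement: the uniform bound $|\overline{\nabla} g|_{g(0)}\leq C(K)$ down to $t=0$ does not come from ``upgrading'' the interior $t^{-m/2}$ estimates using smoothness of the initial data; rather, it is built into Shi's scheme because the background metric \emph{is} the initial metric, so $\overline{\nabla} g(0)=0$ and a maximum-principle argument applied to a quantity of the form $\bigl(a+|g-g(0)|^2\bigr)|\overline{\nabla} g|^2$ propagates the gradient bound with a constant depending only on $K$; the higher derivatives are the ones that genuinely degenerate like $t^{-m/2}$ under a curvature bound alone, which is why the theorem only asserts control of $g-g(0)$, $\overline{\nabla} g$ and $\partial_t g$.
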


By the bounds of Proposition \ref{boundsonh}, for all $\epsilon<\epsilon'$ for $\epsilon'$ to be determined in the course of the proof of Theorem \ref{mainthm}, $|Rm(g_0+\epsilon h)|\leq \frac{K}{2}\leq K$ for some $K=K(\epsilon')>0$. Fix $\epsilon<\epsilon'$ for the rest of Section \ref{construct}. Then Theorem \ref{shorttimeests} implies that for $g(0)=g_0+\epsilon h$, there exists a $T=T(\epsilon')=T(K)>0$ such the Ricci-de Turck equation has a solution on $t\in[t_0,t_0+T]$ with $g(t_0)=g_0+\epsilon h$ for $\epsilon=e^{t_0 \lambda}$. For the rest of Section \ref{construct} it will be necessary, as we progress through the lemmas, to have $T$ be sufficiently small, but at a minimum it will always be less than or equal to $T(\epsilon')$.

It will be useful to define the following tensors,
\begin{align*}
	&g_\epsilon=\overline{g}= g_0+\epsilon h,\\ 
	& v=g-\overline{g}.
\end{align*}

The components of $g_\epsilon$ and covariant derivates with respect to $g_{\epsilon}$ will be denoted by $\overline{g}_{ij}$ and $\overline{\nabla}$ respectively. Also, the components of $g_0$ and covariant derivates with respect to $g_0$ will be denoted by $\dot{g}_{ij}$ and $\dot{\nabla}$ respectively.   Denote by $|\cdot |, |\cdot |_0, |\cdot |_\epsilon$ the norms on tensors corresponding to $g, \dot{g}, \overline{g}$ respectively, and their $C^k$ norms by $||\cdot||_{C^k}, ||\cdot||_{C^k}^{(0)}, ||\cdot||_{C^k}^{(\epsilon)}$ respectively. We  note that the bounds of Proposition \ref{boundsonh} imply that the norms $|\cdot |_0, |\cdot |_\epsilon$ are equivalent. Furthermore, for $\epsilon<\epsilon'$, $\overline{g}$ has bounded curvature and positive injectivity radius, both uniformly in $\epsilon$.

By \cite{Shi1},   if $g(t)$ is a solution to the Ricci-de Turck flow then, 
\begin{align*}
	\partial_t g_{ij}&=-2R_{ij}+\mathcal{L}_{V(g(t),g_{0})}g_{ij} \\
	&= g^{\alpha\beta}\overline{\nabla}^2_{\alpha\beta}g_{ij}-g^{\alpha\beta}g_{ip}\overline{g}^{pq}\overline{R}_{j\alpha q \beta}-g^{\alpha\beta}g_{jp}\overline{g}^{pq}\overline{R}_{i \alpha q \beta}\\
	&\hspace{4mm}+ \frac{1}{2}g^{\alpha\beta}g^{pq}(\overline{\nabla}_i g_{p\alpha}\overline{\nabla}_j g_{q\beta}+ 2\overline{\nabla}_{\alpha}g_{jp}\overline{\nabla}_q g_{i\beta}-2\overline{\nabla}_\alpha g_{jp}\overline{\nabla}_\beta g_{iq}\\
	&\hspace{46mm}-2\overline{\nabla}_i g_{p\alpha}\overline{\nabla}_\beta g_{iq}-2\overline{\nabla}_i g_{p\alpha}\overline{\nabla}_\beta g_{jp}).
\end{align*}

Since $g(t), g_0, \overline{g}$ are all in the diagonal form with respect to $dr^2, \sigma_1, \sigma_2, \sigma_{3}$ and is radially symmetric,  it follows that $v$ satisfies
\begin{align}\label{veq}
	\partial_{t} v_{ii}=g^{\alpha\alpha}\overline{\nabla}^2_{\alpha\alpha} v_{ii}+A_{ii},
\end{align}
where $$A_{ii}=-2g^{\alpha\alpha}g_{ii}\overline{g}^{ii}\overline{R}_{i\alpha i\alpha},$$ for $i\not= r$, and 
$$A_{rr}= -2g^{\alpha\alpha}g_{rr}\overline{g}^{rr}\overline{R}_{r\alpha r\alpha}+\frac{1}{2}g^{kk}g^{kk}(\overline{\nabla}_r v_{kk})^2.$$
We have
\begin{align*}
	\partial_t |v|_{\epsilon}^2&=2\langle \partial_t v, v \rangle_{\epsilon}\\
	&= 2\langle g^{\alpha\alpha}\overline{\nabla}^2_{\alpha\alpha}v, v \rangle_{\epsilon} + (\overline{g}^{ss}g^{kk})^2(\overline{\nabla}_{s}v_{kk})^2v_{11}+\langle F, v \rangle_{\epsilon} \\
	&=g^{\alpha\alpha}\overline{\nabla}^2_{\alpha\alpha}|v|^2_{\epsilon}-2g^{\alpha\alpha}\langle \overline{\nabla}_{\alpha}v, \overline{\nabla}_\alpha v \rangle_{\epsilon}+ (\overline{g}^{rr}g^{kk})^2(\overline{\nabla}_{r}v_{kk})^2v_{rr}+\langle F, v \rangle_{\epsilon},
\end{align*}
where $F=2g^{\alpha\alpha}g_{ii}\overline{g}^{ii}\overline{R}_{i\alpha i \alpha}.$ 

We check that, at least on a short time interval, $|v|_{\epsilon}$ decays faster than $r^{-l}$ for any $l>0$. In particular, $||v||_2^{(\epsilon)}(t)$ is finite. The proof of the following lemma is inspired by \cite[Proposition 5.7]{Tak1}

\begin{lemma}\label{vdecay}
	Let $T>0$ be sufficiently small. Then 
	$|v|_\epsilon(t)\leq C(l,T)r^{-l}$ for all $l>0$ and $t\in [t_0, t_0+T]$.
\end{lemma}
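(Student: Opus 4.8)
The plan is to run a maximum-principle argument on a weighted version of $|v|_\epsilon^2$. Fix $l>0$ and choose a smooth positive function $\chi_l$ on $M$ with $\chi_l\equiv 1$ for $r\le 1$, $\chi_l=r^{2l}$ for $r\ge 2$, monotone in $r$, so that $|\overline\nabla\chi_l|_\epsilon\le C(l)\chi_l$, $|\overline\nabla^2\chi_l|_\epsilon\le C(l)\chi_l$, and $\chi_l$ has at most polynomial growth; set $\Phi_l:=\chi_l|v|_\epsilon^2$. First I invoke Theorem~\ref{shorttimeests}: after fixing $\rho>0$ small there is $T=T(\rho,K)>0$ on which the Ricci--de Turck flow exists with $|v|_\epsilon\le\rho$ and $|\overline\nabla g|_\epsilon,|\partial_t g|_\epsilon\le C$. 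In particular $g$ stays uniformly equivalent to $\overline g$, so $g^{\alpha\alpha}\overline\nabla^2_{\alpha\alpha}$ is uniformly elliptic with bounded coefficients relative to the fixed (bounded) $\overline g$-geometry; moreover $\Phi_l\le\rho^2\chi_l$ has at most polynomial growth at infinity, and $\Phi_l(\cdot,t_0)=0$ since $g(t_0)=\overline g$.

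The structural point is that the zeroth-order source in the evolution of $|v|_\epsilon^2$ is exponentially small. Since $\overline g=g_0+\epsilon h$ with $g_0=g_{\mathrm{Bolt}}$ Ricci-flat and $h,\overline\nabla h,\overline\nabla^2 h$ exponentially decaying by Proposition~\ref{boundsonh}, Lemma~\ref{propofTB}(i) gives $|\mathrm{Rm}(\overline g)|_\epsilon=O(r^{-3})$ while $|\mathrm{Ric}(\overline g)|_\epsilon=O(e^{-\beta r})$ for some $\beta>0$. Writing $g^{\alpha\alpha}=\overline g^{\alpha\alpha}+O(|v|_\epsilon)$ in $F_{ii}=2g^{\alpha\alpha}g_{ii}\overline g^{ii}\overline R_{i\alpha i\alpha}$ and contracting the summed index $\alpha$ to a Ricci component then gives $|F|_\epsilon\le Ce^{-\beta r}+C|v|_\epsilon r^{-3}$, so $\langle F,v\rangle_\epsilon\le|F|_\epsilon|v|_\epsilon\le Ce^{-\beta r}|v|_\epsilon+C|v|_\epsilon^2$. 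Inserting this into the evolution equation for $|v|_\epsilon^2$ displayed above, and absorbing the term $(\overline g^{rr}g^{kk})^2(\overline\nabla_r v_{kk})^2v_{rr}$ — which is $\le C\rho|\overline\nabla v|_\epsilon^2$ since $|v|_\epsilon\le\rho$ — into the good term $-2g^{\alpha\alpha}\langle\overline\nabla_\alpha v,\overline\nabla_\alpha v\rangle_\epsilon\le-c_0|\overline\nabla v|_\epsilon^2$ (valid for $\rho$ small), we obtain on $M\times(t_0,t_0+T]$
\[
\partial_t\big(|v|_\epsilon^2\big)\le g^{\alpha\alpha}\overline\nabla^2_{\alpha\alpha}\big(|v|_\epsilon^2\big)-c_0|\overline\nabla v|_\epsilon^2+Ce^{-\beta r}|v|_\epsilon+C|v|_\epsilon^2 .
\]

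Multiplying by $\chi_l$ and expanding $\overline\nabla^2(\chi_l|v|_\epsilon^2)$ by the product rule, the cross term $g^{\alpha\alpha}(\overline\nabla_\alpha\chi_l)(\overline\nabla_\alpha|v|_\epsilon^2)=2g^{\alpha\alpha}(\overline\nabla_\alpha\chi_l)\langle\overline\nabla_\alpha v,v\rangle_\epsilon$ is, by Young's inequality together with $|\overline\nabla\chi_l|_\epsilon\le C(l)\chi_l$, absorbed partly into $-c_0\chi_l|\overline\nabla v|_\epsilon^2$ and partly into a term $C_1(l)\Phi_l$; the term $g^{\alpha\alpha}(\overline\nabla^2_{\alpha\alpha}\chi_l)|v|_\epsilon^2$ is $\le C_1(l)\Phi_l$; and the source becomes $\chi_l Ce^{-\beta r}|v|_\epsilon=Ce^{-\beta r}\chi_l^{1/2}\Phi_l^{1/2}\le C_2(l)(1+\Phi_l)$ because $e^{-\beta r}\chi_l^{1/2}\le C(l)$. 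Discarding the remaining good term, this yields
\[
\partial_t\Phi_l\le g^{\alpha\alpha}\overline\nabla^2_{\alpha\alpha}\Phi_l+C_1(l)\Phi_l+C_2(l)\quad\text{on }M\times(t_0,t_0+T],\qquad\Phi_l(\cdot,t_0)=0 .
\]
Comparing $\Phi_l$ with the spatially constant supersolution $\psi(t)$ solving $\psi'=C_1(l)\psi+C_2(l)$, $\psi(t_0)=0$, gives $\Phi_l(\cdot,t)\le\psi(t)\le C(l,T)$ on $[t_0,t_0+T]$; hence $|v|_\epsilon^2=\chi_l^{-1}\Phi_l\le C(l,T)\chi_l^{-1}$, which for $r\ge 2$ reads $|v|_\epsilon(t)\le\sqrt{C(l,T)}\,r^{-l}$, while for $r\le 2$ the bound holds trivially from $|v|_\epsilon\le\rho$. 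Since $l>0$ was arbitrary, this proves the lemma.

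The main obstacle is the comparison with $\psi$ in the last step: this is a maximum principle on the non-compact $M$, namely that $u:=\Phi_l-\psi$ satisfies $\partial_t u\le g^{\alpha\alpha}\overline\nabla^2_{\alpha\alpha}u+C_1(l)u$ with $u(\cdot,t_0)\le 0$ and $u$ of at most polynomial growth. Because $(M,\overline g)$ has bounded curvature, positive injectivity radius and (being asymptotically locally flat) at most polynomial volume growth, and the operator is uniformly elliptic with bounded coefficients, this is covered by standard maximum principles for parabolic inequalities with controlled growth at infinity (of Karp--Li / Ecker--Huisken type), proved by cutting off near $\partial B_R(x_0)$ with the distance function and letting $R\to\infty$; alternatively one may run the whole argument inductively over $l=0,1,2,\dots$, the hypothesis $|v|_\epsilon\le C_m r^{-m}$ reducing the a priori growth of $\Phi_{m+1}$ to $O(r^2)$ at each stage.
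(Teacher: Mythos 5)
Your argument is correct, but it takes a genuinely different route from the paper. The paper makes $T$ small so that the gradient and cubic terms together are nonpositive, arrives at $\partial_t|v|_\epsilon^2\le L(|v|_\epsilon^2)+\langle F,v\rangle_\epsilon$ with $L$ uniformly elliptic, and then uses the parametrix construction of Friedman to write the solution as a convolution of the fundamental solution of $L$ against the source; the crude bound $|\langle F,v\rangle|\le Cr^{-3}$ gives $|v|_\epsilon^2\le Cr^{-3}$, and the decomposition $F=2g^{\alpha\alpha}\overline g^{\alpha\alpha}v_{\alpha\alpha}g_{ii}\overline g^{ii}\overline R_{i\alpha i\alpha}+2g_{ii}\overline g^{ii}\overline R_{ii}$ (with $\overline R_{ii}$ decaying faster than any polynomial because $g_0$ is Ricci flat and $h$ decays exponentially) is then fed back in to bootstrap the decay order by order. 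You use exactly the same structural decomposition of $F$, but you exploit the smallness of $\overline{\mathrm{Ric}}$ up front, so that after weighting by $\chi_l\sim r^{2l}$ the source is uniformly bounded and a single comparison with an ODE supersolution closes the estimate for each $l$; the price is that you must invoke a maximum principle on the non-compact $M$ for subsolutions of polynomial growth, which is indeed available (Karp--Li/Ecker--Huisken type) given the bounded geometry and the cubic volume growth of the ALF end, and your fallback of inducting over $l$ to tame the a priori growth is a reasonable belt-and-braces alternative that mirrors the paper's bootstrap in spirit. What your route buys is independence from the parametrix machinery and its off-diagonal estimates; what the paper's route buys is a representation formula that is reused later (the same parametrix argument reappears in the proof of Theorem \ref{anicentRDTF} to show the ancient solution is non-trivial), so the two are complementary rather than one strictly dominating the other.
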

\begin{proof}
	By Theorem \ref{shorttimeests}, we can make $T$ small enough so that 
	$$-2g^{\alpha\alpha}\langle \overline{\nabla}_{\alpha}v, \overline{\nabla}_\alpha v \rangle_\epsilon+ (\overline{g}^{rr}g^{kk})^2(\overline{\nabla}_{r}v_{kk})^2v_{11}<0.$$
	By making $T$ smaller again, by Theorem \ref{shorttimeests}, we can assume $|v|_\epsilon$ is bounded above so that
	$$\partial_t |v|_{\epsilon}^2\leq g^{\alpha\alpha}\overline{\nabla}^2_{\alpha\alpha}|v|^2_{\epsilon}+\langle F, v \rangle_\epsilon= L(|v|^2_{\epsilon})+\langle F, v \rangle_\epsilon,$$
	with $L$ uniformly elliptic operator on $[t_0, t_0+T]$.
	
	Use the parametric method of \cite{Friedman}  with background metric $g_\epsilon$, to find the fundamental solution $\Phi(x, \eta, t, \tau)$ of $L$. Then 
	$$v(x,t)= \Phi * |\langle F, v \rangle |= \int_{t_0}^{t_0+T} \int_{0}^{\infty} \Phi(x, \eta, t, \tau) \langle F, v \rangle(\eta, \tau) dVol_{\overline{g}}.$$
	Since $|v|$ is bounded on $[t_0, t_0+T]$ and $|Rm|_{\epsilon}\leq Cr(x)^{-3}$, it follows that $|\langle F, v \rangle| \leq Cr(x)^{-3}$. By the estimates  of \cite{Friedman},
	$$|v|^2(x(r),t)\leq Cr^{-3}, \qquad t\in [t_0, t_0+T].$$
	To obtain a stricter bound on $|v|$, we notice that
	$$F=2g^{\alpha\alpha}\overline{g}^{\alpha\alpha}v_{\alpha\alpha}g_{ii}\overline{g}^{ii}\overline{R}_{i\alpha i\alpha}+2g_{ii}\overline{g}^{ii}\overline{R}_{ii}.$$
	And so, by Proposition \ref{boundsonh}, $|\dot{\nabla}^m h|_0 \leq C_{l,m}r^{-l}$ for all $l$, and so, since $\dot{Ric}=0$,
	$$|\overline{Ric}|_\epsilon \leq C_l r^{-l},$$
	for all $l$. So $|\langle F, v \rangle_\epsilon|_\epsilon\leq Cr^{-6}.$ Estimating as above yields the stronger bound $|v|_\epsilon\leq Cr^{-3}$. Repeating this argument we have, by induction, $|v|_\epsilon\leq C(l)r^{-l}$ for all $l$.
\end{proof}
We now check the same for $|\overline{\nabla}^m v|_\epsilon.$

\begin{lemma}
	$|\overline{\nabla}^m v|_\epsilon(t)\leq C(m,l)r^{-l}$ for all $m,l>0$ and $t\in [t_0, t_0+T_0]$.
\end{lemma}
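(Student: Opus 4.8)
The plan is to mimic the proof of the preceding Lemma \ref{vdecay}, but now applied to the quantity $|\overline{\nabla}^m v|_\epsilon^2$, proceeding by induction on $m$. The base case $m=0$ is exactly Lemma \ref{vdecay}. For the inductive step, I would first commute a covariant derivative past the evolution equation (\ref{veq}) for $v$. Since $g(t)$, $g_0$, and $\overline{g}$ are all diagonal and radially symmetric, differentiating (\ref{veq}) and using the Ricci identity to interchange $\overline{\nabla}^m$ with the rough Laplacian $g^{\alpha\alpha}\overline{\nabla}^2_{\alpha\alpha}$ produces
$$\partial_t (\overline{\nabla}^m v) = g^{\alpha\alpha}\overline{\nabla}^2_{\alpha\alpha}(\overline{\nabla}^m v) + (\text{error}),$$
where the error terms involve $\overline{\nabla}^j(g^{\alpha\alpha})$, $\overline{\nabla}^j \overline{Rm}$, $\overline{\nabla}^j \overline{Ric}$, and $\overline{\nabla}^j A_{ii}$ for $j\leq m$, contracted against lower-order derivatives $\overline{\nabla}^k v$ with $k\leq m$. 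By Theorem \ref{shorttimeests}, for $T$ small enough $g^{\alpha\alpha}$ and all its $\overline{\nabla}$-derivatives up to order $m$ are uniformly bounded on $[t_0,t_0+T]$; by Proposition \ref{boundsonh} (applied to $h$), $|\dot\nabla^j h|_0 \leq C_{j,l}r^{-l}$ for all $l$, hence $|\overline{\nabla}^j \overline{Rm}|_\epsilon$ and $|\overline{\nabla}^j \overline{Ric}|_\epsilon$ decay faster than any polynomial; and by the inductive hypothesis $|\overline{\nabla}^k v|_\epsilon \leq C(k,l)r^{-l}$ for all $k<m$ and all $l$. The quadratic-in-$\overline{\nabla}^m v$ term coming from the $A_{rr}$-type contribution can, exactly as in Lemma \ref{vdecay}, be absorbed using the good gradient term $-2g^{\alpha\alpha}\langle \overline{\nabla}_\alpha (\overline{\nabla}^m v), \overline{\nabla}_\alpha(\overline{\nabla}^m v)\rangle_\epsilon$ produced by $\partial_t|\overline{\nabla}^m v|^2_\epsilon$ after $T$ is shrunk.

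Having arranged this, I would write $\partial_t |\overline{\nabla}^m v|^2_\epsilon \leq L(|\overline{\nabla}^m v|^2_\epsilon) + \langle G, \overline{\nabla}^m v\rangle_\epsilon$ for a uniformly elliptic $L$ on the short interval, where $G$ collects all the error terms. Using the inductive hypothesis and the decay of the curvature coefficients, $|G|_\epsilon \leq C(m,l) r^{-l}$ for every $l$ (the only borderline contribution, the term linear in $\overline{\nabla}^m v$ with a coefficient that is merely bounded — e.g. from $g^{\alpha\alpha}\overline{g}^{\alpha\alpha}\overline{R}_{i\alpha i\alpha}$ acting on $v_{\alpha\alpha}$ at top order — should be moved to the left-hand side or handled by the maximum-principle/Duhamel bootstrap rather than Duhamel directly; alternatively one observes as in Lemma \ref{vdecay} that $|\overline{Ric}|_\epsilon$ itself decays faster than any polynomial, so the genuinely dangerous coefficient is absent). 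Then I invoke the parametrix construction of \cite{Friedman} with background metric $g_\epsilon$ to get the fundamental solution $\Phi$ of $L$, represent
$$|\overline{\nabla}^m v|^2(x,t) \leq \int_{t_0}^{t_0+T}\int_0^\infty \Phi(x,\eta,t,\tau)\,|\langle G, \overline{\nabla}^m v\rangle|(\eta,\tau)\,dVol_{\overline{g}},$$
and run the same bootstrap as in Lemma \ref{vdecay}: an a priori bound $|\overline{\nabla}^m v|^2 \leq C r^{-3}$ from boundedness of $|\overline{\nabla}^m v|$ (which follows from Theorem \ref{shorttimeests} applied to enough derivatives) feeds back through the Gaussian estimates of \cite{Friedman} to upgrade $r^{-3}$ to $r^{-l}$ for arbitrary $l$. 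This closes the induction.

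The main obstacle is the commutator bookkeeping: verifying that commuting $\overline{\nabla}^m$ through $g^{\alpha\alpha}\overline{\nabla}^2_{\alpha\alpha}$ and through the nonlinear $A_{ii}$ terms genuinely produces only (i) the good Laplacian term in $\overline{\nabla}^m v$, (ii) a quadratic top-order term absorbable by the gradient term, and (iii) error terms whose coefficients decay faster than any power of $r$ after using Proposition \ref{boundsonh} — in particular checking that no top-order term in $\overline{\nabla}^m v$ survives with a coefficient that merely stays bounded (the $\overline{Ric}$-free structure of $F$ exploited at the end of Lemma \ref{vdecay} is exactly what rescues this). The remaining analytic input — the short-time bounds and the parametrix estimates — is quoted verbatim from Theorem \ref{shorttimeests} and \cite{Friedman}, so no new difficulty arises there. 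A secondary technical point is ensuring that, at each stage of the induction, $T$ may be shrunk finitely many times (once for each $m$ up to the needed order, here $m\leq 7$) without losing uniformity in $\epsilon<\epsilon'$; this is harmless since only finitely many reductions occur.
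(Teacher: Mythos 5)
Your argument is a valid route in outline, but it is far heavier than, and genuinely different from, what the paper does. The paper's proof is a two-line interpolation: Shi's derivative estimates \cite{Shi1} give uniform bounds on $|\overline{\nabla}^{m}v|_\epsilon$ for every $m$ on the short time interval, Lemma \ref{vdecay} gives $|v|_\epsilon\leq C_l r^{-l}$ for all $l$, and the standard interpolation inequality (on unit balls, $|\nabla u|\lesssim |u|^{1/2}|\nabla^2 u|^{1/2}$ plus $|u|$, iterated) then forces every intermediate derivative to decay faster than any polynomial as well. Nothing is commuted through the evolution equation and no parametrix is needed beyond the one already used for $v$ itself. Your proposal instead re-runs the whole parabolic machinery of Lemma \ref{vdecay} at each derivative level: commuting $\overline{\nabla}^m$ through the equation, absorbing the top-order quadratic term into the gradient term, and bootstrapping through the Friedman parametrix. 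This can be made to work, and it has the advantage of not needing Shi's estimates as a black box, but the commutator bookkeeping you flag as the main obstacle is precisely the work the interpolation route avoids entirely. One genuine inaccuracy in your write-up: $|\overline{\nabla}^j\overline{\mathrm{Rm}}|_\epsilon$ does \emph{not} decay faster than any polynomial --- the curvature of $\overline{g}=g_0+\epsilon h$ contains the Taub-Bolt curvature, which by Lemma \ref{propofTB} only decays like $r^{-3}$; only $\overline{\mathrm{Ric}}$ decays super-polynomially (since $g_0$ is Ricci flat and $h$ decays rapidly). Your bootstrap survives this because, as in Lemma \ref{vdecay}, each iteration only needs to gain the factor $r^{-3}$ from the curvature coefficient, but the claim as stated should be corrected.
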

\begin{proof}
	By \cite{Shi1}, $|\nabla^m v|_\epsilon$ is bounded all $m$. By Lemma \ref{vdecay}, $|v|_\epsilon\leq C_{l}r^{-l}$ for all $l$. It follows that  $|\nabla^m v|_\epsilon(t)\leq C_{l,m}r^{-l}$ for all $m,l>0$
\end{proof}

Recall that $g(t)$ is the solution to the Ricci-de Turck flow with initial condition $g(t_0)=g_0+\epsilon h,$ where $\epsilon\coloneqq e^{\lambda t_0}$,
$\dot{g}=g_0, \overline{g}=g_0+\epsilon h$. Throughout the rest of Section \ref{construct} the metric $g(t)$ will satisfy
$$|g(t)-\overline{g}|_\epsilon\leq C,$$
for some uniform constant $C$. This implies that $g(t)$ and $\overline{g}$ are equivalent uniformly in time. By the properties of $h$, we have that $\dot{g}$ and $\overline{g}$ are also equivalent uniformly in time. Therefore, when we prove $C^m$ and $H^m$ bounds with respect to one of $g, \dot{g}, \overline{g}$, $C^m$ and $H^m$ bounds with respect to the others follow trivially. 

We define

$$\delta(t)\coloneqq e^{\lambda t},$$

For a tensor $k$ on $M$, denote by $$||k ||^{(0)}_{H^j} =\left(\sum_i \int_M |\dot{\nabla}^i k|_0^2 dVol_{\dot{g}}\right)^{\frac{1}{2}},$$  $$||k ||^{(\epsilon)}_{H^j} =\left(\sum_i \int_M |\overline{\nabla}^i k|_\epsilon^2 dVol_{\overline{g}}\right)^{\frac{1}{2}}.$$

For a tensor on $M\times [a,b]$, denote by 
$$||k||^{(0)}_{H^j([a,b];H^k)}= \sum_{p\leq j, q\leq k}\left(\int_a^b\int_M |(\partial_t)^p\dot{\nabla}^{q}k|_0^2(t)dVol_{\dot{g}}dt\right)^\frac{1}{2},$$
$$||k||^{(\epsilon)}_{H^j([a,b];H^k)}= \sum_{p\leq j, q\leq k}\left(\int_a^b\int_M |(\partial_t)^p\overline{\nabla}^{q}k|_\epsilon^2(t)dVol_{\overline{g}}dt\right)^\frac{1}{2}.$$

Define
$$w\coloneqq v-(\delta(t)-\epsilon)h= g-g_0-\delta h.$$

We have the following,
\begin{align*}
	\partial_t (\delta(t)-\epsilon)h_{kk}&= \lambda \delta(t) h_{kk}\\
	&= (\delta(t)-\epsilon) \Delta^L h_{kk}-\lambda \epsilon h_{kk}\\
	&= \dot{g}^{\alpha \beta}\dot{\nabla}^2_{\alpha \beta} +2(\delta(t)-\epsilon)\dot{R}^{i\mbox{ }k}_{\mbox{ }i\mbox{ }k}h_{ii} \lambda\epsilon h_{kk}.
\end{align*}

Then, using (\ref{veq}), $w$ satisfies

\begin{equation}\label{weq}
	\partial_t w_{kk}= g^{\alpha\alpha}\overline{\nabla}_{\alpha}\overline{\nabla}_{\alpha}w_{kk}+A_{kk}+B_{kk}+D_{kk}-2(\delta(t)-\epsilon)2\dot{R}^{i\mbox{ }k}_{\mbox{ }i\mbox{ }k} h_{ii}-\lambda \epsilon h_{kk},
\end{equation}
where 
\begin{align*}
	A_{kk}&= (g^{\alpha\alpha}\overline{\nabla}_\alpha\overline{\nabla}_\alpha-\dot{g}^{\alpha\alpha}\dot{\nabla}_\alpha \dot{\nabla}_\alpha)((\delta(t)-\epsilon)h)_{kk},\\
	B_{kk}&= 2g^{ii}\overline{g}^{ii}(w_{ii}+(\delta(t)-\epsilon)h_{ii})(w_{kk}+(\delta(t)-\epsilon)h_{kk})\overline{g}^{kk}\overline{R}_{ikik}\\
	D_{kk}&=2g^{ii}\overline{g}^{ii}(w_{ii}+(\delta(t)-\epsilon)h_{ii})\overline{R}_{ikik}-2g_{kk}\overline{g}^{kk}\overline{R}_{kk}
	+ \delta_{rk}\frac{1}{2}(g^{ll})^2(\overline{\nabla}_{r}w_{ll}+\overline{\nabla}_{r}\delta(t)h_{ll})^2.
\end{align*}

Throughout Section \ref{construct}  it will be useful to have estimates of  $A,B,D$ under the assumption that  
$$\left|\overline{\nabla}^m \frac{d^p}{dt^p}w\right|_{\epsilon}(t)\leq C$$  for some $C>0$ and time $t$. In the following $V>0$ will be a constant depending on $C$.

Firstly, 
\begin{align*}
	A_{kk}&=(g^{\alpha\alpha}-\dot{g}^{\alpha\alpha})\overline{\nabla}_\alpha \overline{\nabla}_\alpha((\delta(t)-\epsilon)h)+ \dot{g}^{\alpha\alpha}(\overline{\nabla}_\alpha \overline{\nabla}_\alpha-\dot{\nabla}_\alpha \dot{\nabla}_\alpha)((\delta(t)-\epsilon)h)\\
	&=c^{ii}_{kk}w_{ii}+ (\delta^2(t)d+\epsilon \delta(t) e+ \epsilon^2 f),
\end{align*}
for some $c, d, e, f$ such that $c=O(\delta^2(t)),$ and 
\begin{align*}
	& \left|\left|\overline{\nabla}^m \frac{d^p}{dt^p} d \right|\right|_2^{(\epsilon)}, \left|\left|\overline{\nabla}^m \frac{d^p}{dt^p} e\right|\right|_2^{(\epsilon)} , \left|\left|\overline{\nabla}^m \frac{d^p}{dt^p} f \right|\right|_2^{(\epsilon)} \leq V \\
	&\left|\overline{\nabla}^m \frac{d^p}{dt^p} d \right|_\epsilon,\left|\overline{\nabla}^m \frac{d^p}{dt^p}e\right|_\epsilon,\left|\overline{\nabla}^m \frac{d^p}{dt^p}f\right|_\epsilon\leq V.
\end{align*}

We have
\begin{align*}
	B_{kk}&=2g^{ii}\overline{g}^{ii}(w_{ii}+(\delta(t)-\epsilon)h_{ii})(w_{kk}+(\delta(t)-\epsilon)h_{kk})\overline{g}^{kk}\overline{R}_{ikik}\\
	&= b^{ii}w_{ii}w_{kk}+c^{ii}_{kk}w_{ii}+\tilde{c}_{kk}w_{kk}+(d\delta^2)t+e\epsilon\delta(t)+f\epsilon)_{kk},
\end{align*}
for some $b,c,\tilde{c}, d, e, f$ such that
\begin{align*} &b^{ii}=2g^{ii}\overline{g}^{ii}\overline{g}^{kk}\overline{R}_{ikik}, \qquad \left|\overline{\nabla}^m \frac{d^p}{dt^p}b\right| \leq V,\\
	&c^{ii}_{kk}= 2g^{ii}\overline{g}^{ii}((\delta(t)-\epsilon))h_{kk}\overline{g}^{kk}\overline{R}_{ikik} \in H^p([t_0,t];H^m), \qquad \left|\overline{\nabla}^m \frac{d^p}{dt^p}c\right| \leq V \delta(t)\\
	&\tilde{c}_{kk}= 2g^{ii}\overline{g}^{ii}((\delta(t)-\epsilon))h_{ii}\overline{g}^{kk}\overline{R}_{ikik} \in H^p([t_0,t];H^m) \qquad \left|\overline{\nabla}^m \frac{d^p}{dt^p}c\right| \leq V\delta(t),\\
	& \left|\left|\overline{\nabla}^m \frac{d^p}{dt^p} d \right|\right|_2^{(\epsilon)}, \left|\left|\overline{\nabla}^m \frac{d^p}{dt^p} e\right|\right|_2^{(\epsilon)} , \left|\left|\overline{\nabla}^m \frac{d^p}{dt^p} f \right|\right|_2^{(\epsilon)} \leq V, \\
	&\left|\overline{\nabla}^m \frac{d^p}{dt^p} d \right|_\epsilon,\left|\overline{\nabla}^m \frac{d^p}{dt^p}e\right|_\epsilon,\left|\overline{\nabla}^m \frac{d^p}{dt^p}f\right|_\epsilon\leq V.\\
\end{align*}
Also,
\begin{align*}
	D_{kk}&=2g^{ii}\overline{g}^{ii}(w^{ii}+(\delta(t)-\epsilon)h_{ii})\overline{R}_{ikik}\\
	&= 2\overline{R}^{i\mbox{ }k}_{\mbox{ }i\mbox{ }k} w_{ii}+ c^{ii}_{kk}w_{ii}+(d\delta^2(t)+e\delta(t)\epsilon+f\epsilon^2)_{kk},
\end{align*}
for some $c, d, e, f$ such that
\begin{align*}
	&\left|\overline{\nabla}^m \frac{d^p}{dt^p}c\right|_\epsilon\leq V,\\
	& \left|\left|\overline{\nabla}^m \frac{d^p}{dt^p} d \right|\right|_2^{(\epsilon)}, \left|\left|\overline{\nabla}^m \frac{d^p}{dt^p} e\right|\right|_2^{(\epsilon)} , \left|\left|\overline{\nabla}^m \frac{d^p}{dt^p} f \right|\right|_2^{(\epsilon)} \leq V, \\
	&\left|\overline{\nabla}^m \frac{d^p}{dt^p} d \right|_\epsilon,\left|\overline{\nabla}^m \frac{d^p}{dt^p}e\right|_\epsilon,\left|\overline{\nabla}^m \frac{d^p}{dt^p}f\right|_\epsilon\leq V.
\end{align*}

Furthermore,
$$2g_{kk}\overline{g}_{kk}\overline{R}_{kk}= 2\overline{R}_{kk}=\overline{R}_{kk}\overline{g}^{kk}w_{kk}+(\delta(t)-\epsilon)h_{kk}\overline{g}^{kk}\overline{R}_{kk}= cw_{kk} +\epsilon \overline{f}_{kk},$$ for some $c, \overline{f}$ such that 
\begin{align*}
	&\left|\overline{\nabla}^m \frac{d^p}{dt^p}c\right|_\epsilon\leq V \delta(t),\\ 
	& \overline{f}\in H^m  \text{ is independent of time}.
\end{align*}
Also,
$$\delta_{kr}\frac{1}{2}(g^{ll})^2(\overline{\nabla}_{r}w_{ll}+\overline{\nabla}_{r}\delta(t)h_{ll})^2
=\overline{b}^{ll}(\overline{\nabla}_{r}w_{ll})^2+\overline{c}^{ll}_r\overline{\nabla}_r
w_{ll}+\delta^2(t)d_{rr},$$
for some $\overline{b}, \overline{c}$ such that
$$\left|\overline{\nabla}^m \frac{d^p}{dt^p}\overline{b}\right|_\epsilon\leq V, \left|\overline{\nabla}^m \frac{d^p}{dt^p}\overline{c}\right|_\epsilon\leq V \delta(t).$$

Therefore, using Lemma \ref{vdecay},
\begin{equation}
	\partial_t w= E(w)+F, \qquad w=0 \text{ on } M\times \{t_0\}
\end{equation}
where
\begin{align*}
	E(w)_{kk}&=g^{\alpha\alpha}\overline{\nabla}_\alpha\overline{\nabla}_\alpha w_{kk}+2\overline{R}^{i\mbox{ }k}_{\mbox{ }i\mbox{ }k} w_{ii}+\delta_{rk}((\overline{b}^{ll}\overline{\nabla}_rw_{ll})^2+\overline{c}^{ll}_r\overline{\nabla}_r w_{ll})\\
	&\hspace{4mm}+b^{ii}w_{ii}w_{kk}+c^{ii}_{kk}w_{ii}+ \tilde{c}_{kk}w_{kk},\\
	F&=d\delta^2+ e\epsilon\delta(t)+f\epsilon^2+\overline{f}\epsilon,
\end{align*}
for some two tensors $b, \overline{b}, c, \overline{c}, \tilde{c}, d, e, f, \overline{f}$ satisfying, under the  assumption that  
$$\left|\overline{\nabla}^m \frac{d^p}{dt^p}w\right|_{\epsilon}\leq C,$$ the following bounds,
\begin{align*}
	&\left|\overline{\nabla}^m \frac{d^p}{dt^p}\overline{b}\right|_\epsilon, \left|\overline{\nabla}^m \frac{d^p}{dt^p}b\right|_\epsilon \leq V,\\
	&\left|\overline{\nabla}^m c\right|_\epsilon,  \left|\overline{\nabla}^m \frac{d^p}{dt^p}c\right|_\epsilon, \left|\overline{\nabla}^m \frac{d^p}{dt^p}c\right|_\epsilon \leq V \delta(t).\\
	& \left|\left|\overline{\nabla}^m \frac{d^p}{dt^p} d \right|\right|_2^{(\epsilon)}, \left|\left|\overline{\nabla}^m \frac{d^p}{dt^p} e\right|\right|_2^{(\epsilon)} , \left|\left|\overline{\nabla}^m \frac{d^p}{dt^p} f \right|\right|_2^{(\epsilon)} \leq V, \\
	&\left|\overline{\nabla}^m \frac{d^p}{dt^p} d \right|_\epsilon,\left|\overline{\nabla}^m \frac{d^p}{dt^p}e\right|_\epsilon,\left|\overline{\nabla}^m \frac{d^p}{dt^p}f\right|_\epsilon\leq V.
\end{align*}
for some $V=V(C)>0.$

\subsection{$C^m$ bounds}\label{C^mbounds}
Theorem \ref{l^2long} will be the main technical result used in the proof of Theorem \ref{mainthm}. Proving Theorem \ref{l^2long} will require $C^m$ bounds on $w$ and its derivates in time and space, so that we can control certain terms not accounted for by an appropriate linear operator. The goal of Section \ref{C^mbounds} will be to prove the following proposition. 
\begin{proposition}\label{lastC^m}
	Suppose $w(t)$ exists for $t\in[t_0, t_0+kT]$.
	There exists $M_j,T>0$ and $N\leq 0$ such that if 
	$$\left|\left|\frac{d^j}{dt^j} \overline{\nabla}^iw\right|\right|_2^{\epsilon}(t) \leq M_{i+j}\delta(t), \qquad i+j\leq 7,$$
	for all $t\in [t_0, t_0+(k-1)T]$ and $t_0+kT<N$, 
	then for all $m,p$
	$$\left|\frac{d^p}{dt^p} \overline{\nabla}^m w\right|_\epsilon\leq C_{m,p} \delta(t),$$
	for all $t\in [t_0, t_0+kT].$
\end{proposition}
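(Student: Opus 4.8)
The plan is to run an elliptic--parabolic bootstrap. The hypothesis supplies $L^2$ bounds on $w$ and its space--time derivatives up to total order $7$ on $[t_0,t_0+(k-1)T]$, and I would convert these first into $C^4$ bounds via the Sobolev inequality of Lemma~\ref{sobelevineq}, then into $C^m$ bounds for every $m,p$ by repeatedly differentiating the evolution equation $\partial_t w = E(w)+F$ of Subsection~\ref{groundwork} and applying interior parabolic regularity. The one genuinely new stretch of time, $[t_0+(k-1)T,\,t_0+kT]$, on which no $L^2$ control is yet available, will be treated separately using Theorem~\ref{shorttimeests} together with Shi-type derivative estimates and the (by then smooth) data at $t=t_0+(k-1)T$. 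The key bookkeeping fact throughout is that $t_0+kT<N\le 0$ forces $\delta(t)=e^{\lambda t}\le e^{\lambda N}\le 1$ and $\epsilon=\delta(t_0)\le\delta(t)$, so that $F=O(\delta(t))$ with coefficients bounded in every $C^m$, which is what makes it possible to keep all constants independent of $k$ (hence of $n$).

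First I would apply Lemma~\ref{sobelevineq} to $k=\frac{d^p}{dt^p}w$ on $(M,\overline g)$, which has bounded curvature and positive injectivity radius uniformly in $\epsilon$; together with the equivalence of $|\cdot|_0$ and $|\cdot|_\epsilon$ this gives
\begin{equation*}
	\left|\overline\nabla^m \tfrac{d^p}{dt^p}w\right|_\epsilon \le C\,\Big\|\tfrac{d^p}{dt^p}w\Big\|^{(\epsilon)}_{W^{m+3,2}} \le C_{m,p}\sum_{i\le m+3}\left\|\overline\nabla^i\tfrac{d^p}{dt^p}w\right\|_2^{(\epsilon)}\le C_{m,p}\,\delta(t)
\end{equation*}
whenever $m+p+3\le 7$, hence for $m+p\le 4$ on all of $[t_0,t_0+(k-1)T]$; finiteness of every norm on the non-compact $M$ follows from the rapid $r$-decay of $v$ and $h$ established in Subsection~\ref{groundwork} and Proposition~\ref{boundsonh}. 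To reach all orders I would then bootstrap: once the $m+p\le4$ bounds are in hand the quadratic terms $b^{ii}w_{ii}w_{kk}$ and $\delta_{rk}(\overline b^{ll}\overline\nabla_r w_{ll})^2$ in $E(w)$ may be reinterpreted as first/zeroth order terms with coefficients bounded in $C^1$, so $w$ solves a uniformly parabolic linear equation with smooth coefficients. Differentiating this equation and viewing the result as a parabolic equation for $\overline\nabla^{\ell-2}\partial_t^{p}w$, interior parabolic Schauder estimates gain one order of regularity at a time; since interior-in-time estimates remain valid up to the terminal time $t_0+(k-1)T$ and the data $w(t_0)=0$ is smooth with rapid $r$-decay, induction yields $\big|\overline\nabla^m\frac{d^p}{dt^p}w\big|_\epsilon\le C_{m,p}\,\delta(t)$ for all $m,p$ on $[t_0,t_0+(k-1)T]$.

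It then remains to propagate these bounds across $[t_0+(k-1)T,\,t_0+kT]$. At $t=t_0+(k-1)T$ the previous step provides smooth data bounded in every $C^m$ by $C_m\,\delta(t_0+(k-1)T)\le C_m e^{\lambda N}$; in particular $|g-\overline g|_\epsilon$ is small there. Since $T\le T(\epsilon')$ from Theorem~\ref{shorttimeests} and the uniform bound $|g-\overline g|_\epsilon\le C$ holds throughout, the coefficients of the parabolic equation for $w$ are uniformly controlled on this short interval, so short-time parabolic estimates --- Theorem~\ref{shorttimeests} together with Shi-type derivative estimates for the Ricci--de~Turck flow, cf.\ \cite{Shi1} --- carry the $C^m$ bounds forward, with constants depending only on $m$, $K$, $\epsilon'$, $\lambda$, $N$. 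As $\delta(t)$ changes only by the factor $e^{\lambda T}$ across this interval, the conclusion $\big|\overline\nabla^m\frac{d^p}{dt^p}w\big|_\epsilon\le C_{m,p}\,\delta(t)$ holds there as well (the case $k=1$ being immediate from this step alone, with $w(t_0)=0$).

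The step I expect to be the main obstacle is this last one: obtaining the $C^m$ estimates on the final subinterval with constants genuinely uniform in $k$ (equivalently in $n$) when the $L^2$ hypothesis is not available there, relying only on the short-time theory and the data at $t_0+(k-1)T$, and checking that the semilinear terms in $E(w)$ do not spoil the scaling by $\delta(t)$. A secondary but pervasive nuisance is tracking the exponential spatial decay of $w$ so that all global norms are finite and both the Sobolev inequality and the parabolic estimates genuinely apply on the non-compact manifold $M$.
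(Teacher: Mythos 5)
Your proposal is correct and follows essentially the same route as the paper: Lemma \ref{sobelevineq} converts the $H^7$ hypothesis into low-order $C^m$ bounds scaled by $\delta(t)$, a parabolic bootstrap on the differentiated evolution equation (with zero initial data and decay at spatial infinity supplying the $\delta(t)$ scaling) yields all higher space and time derivatives, and Theorem \ref{shorttimeests} (together with Shi-type short-time derivative control, which is exactly what the paper's Lemma \ref{shortC^mests} provides) carries the bounds across the final subinterval $[t_0+(k-1)T,\,t_0+kT]$. The only cosmetic difference is that you invoke interior parabolic Schauder estimates where the paper cites the global linear parabolic theory of \cite{Lq} with the boundary condition $w(\cdot,\infty)=0$.
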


The proof if Proposition \ref{lastC^m} will be broken up into proving the the next few lemmas. The following lemma shows that having $H^7$ bounds on $w$  means we have $C^m$ bounds for $w$ independent of $t_0,k$. The reason for number 7 here is that it gives us, by Lemma \ref{sobelevineq}, $C^4$ bounds which will be needed to extract a suitable convergent subsequence in the proof of Theorem \ref{anicentRDTF}.

\begin{lemma}\label{longtimeests}
	Suppose $N<0$ is sufficiently negative and $t_0+(k-1)T<N$.
	If 
	$$|| \overline{\nabla}^iw||_2^{(\epsilon)}(t) \leq M_{i}\delta(t), \qquad i\leq 7,$$
	for all $t\in [t_0, t_0+(k-1)T]$, then 
	there is a $C_i=C_i(N)>0$ such that
	$$|\overline{\nabla}^i w|_\epsilon(t) \leq C_i,$$
	for all $t\in [t_0, t_0+(k-1)T]$, where $C_i(N) \rightarrow 0$ as $N \rightarrow -\infty$.
\end{lemma}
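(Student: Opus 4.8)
The plan is to upgrade the uniform (in $t_0$, $k$) $L^2$ bounds $\|\overline{\nabla}^i w\|_2^{(\epsilon)}(t)\le M_i\delta(t)$ for $i\le 7$ to pointwise $C^i$ bounds for $i\le 4$ via the Sobolev inequality of Lemma \ref{sobelevineq}, and then to track how the constant depends on the window parameter $N$. First I would observe that the weighted space $\mathbb{H}^1$ is irrelevant here: we already have genuine $H^7$ control with respect to $\overline{g}$, uniformly in time on $[t_0,t_0+(k-1)T]$, and $(\overline{g})$ has curvature bounded and injectivity radius bounded below uniformly in $\epsilon<\epsilon'$ (noted after Theorem \ref{shorttimeests}). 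Applying Lemma \ref{sobelevineq} with $M'=M$, $g=\overline{g}$, $k=w(t)$ and $m=4$ gives
\begin{equation*}
	|\overline{\nabla}^i w|_\epsilon(t)\le C\,\|w(t)\|_{W^{7,2}(M,\overline{g})}\le C\sum_{j\le 7}\|\overline{\nabla}^j w\|_2^{(\epsilon)}(t)\le C\Big(\sum_{j\le 7}M_j\Big)\delta(t),
\end{equation*}
for all $i\le 4$ and all $t$ in the given interval, where $C=C(K,\rho)$ is independent of $t_0,k,N$. (Strictly, Lemma \ref{sobelevineq} needs $|w|_\epsilon\in W^{m+3,2}$; this holds because the earlier lemma on $|\overline{\nabla}^m v|_\epsilon\le C(m,l)r^{-l}$ together with the decay of $h$ from Proposition \ref{boundsonh} gives rapid decay of $w$ and all its derivatives, so the relevant $W^{7,2}$ norm of $|w|_\epsilon$ is finite.)

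Next I would convert this into the claimed bound with $C_i(N)\to 0$ as $N\to-\infty$. The point is that on $[t_0,t_0+(k-1)T]$ we have $\delta(t)=e^{\lambda t}\le e^{\lambda(t_0+(k-1)T)}<e^{\lambda N}$ whenever $t_0+(k-1)T<N\le 0$ and $\lambda>0$. Hence from the displayed estimate,
\begin{equation*}
	|\overline{\nabla}^i w|_\epsilon(t)\le C\Big(\sum_{j\le 7}M_j\Big)e^{\lambda N}=:C_i(N),
\end{equation*}
and since $\lambda>0$, $C_i(N)=C_i'e^{\lambda N}\to 0$ as $N\to-\infty$. Choosing $N$ sufficiently negative then makes each $C_i(N)$ as small as we like — this is exactly the content of the lemma, and is what will later let us keep the curvature of $g^n$ within the ball where Theorem \ref{shorttimeests} applies uniformly.

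The only genuine subtlety, and the step I would be most careful about, is the legitimacy of applying Lemma \ref{sobelevineq} to $w(t)$ for a \emph{fixed} $t$: one needs $w(t)$ (equivalently $|w|_\epsilon(t)$) to lie in $W^{7,2}(M,\overline{g})$, not merely $w$ to be $L^2$ in spacetime. This requires knowing, a priori, that $\|\overline{\nabla}^j w\|_2^{(\epsilon)}(t)<\infty$ for each $j\le 7$ and each individual $t$ — which is supplied by the decay lemmas for $v$ (hence for $w=v-(\delta(t)-\epsilon)h$, using the exponential decay of $\overline{\nabla}^j h$ from Proposition \ref{boundsonh}) — so the hypothesis $\|\overline{\nabla}^i w\|_2^{(\epsilon)}(t)\le M_i\delta(t)$ is then an honest bound on a finite quantity. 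A secondary point is that Lemma \ref{sobelevineq} is stated for the norm of a tensor $|k|_g$ being in $W^{m+3,2}$; one should note $|\,|\overline{\nabla}^j w|_\epsilon\,|\le |\overline{\nabla}^j w|_\epsilon$ pointwise (Kato's inequality) so control of the tensor Sobolev norms controls the scalar ones, and the constant $C$ in Lemma \ref{sobelevineq} depends only on $K,\rho$ and the order $m$, all of which are uniform in $\epsilon<\epsilon'$, hence in $t_0$ and $k$. With these observations the lemma follows immediately from Lemma \ref{sobelevineq} and the exponential smallness of $\delta$ on windows lying to the left of $N$.
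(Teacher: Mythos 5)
Your first step --- applying Lemma \ref{sobelevineq} with $m=4$ to the $H^7$ hypothesis, noting the constant depends only on the curvature bound and injectivity radius of $\overline{g}$ (uniform in $\epsilon<\epsilon'$), and then using $\delta(t)\le e^{\lambda N}\to 0$ as $N\to-\infty$ to make the resulting $C^4$ bound small --- is exactly the opening move of the paper's proof, and your remarks about why $w(t)\in W^{7,2}$ at each fixed time (decay of $v$ and of $h$) are sound. But the argument stops too early. The lemma is used later (in Lemma \ref{i,m} and Proposition \ref{lastC^m}) to supply pointwise bounds on $|\overline{\nabla}^i w|_\epsilon$ for \emph{all} $i$, and the paper's proof is explicitly an induction producing $C_m(N)$ for every $m$. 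The Sobolev inequality of Lemma \ref{sobelevineq} converts $W^{m+3,2}$ control into $C^m$ control, so $H^7$ control by itself can never reach $i=5,6,7$, let alone arbitrary $i$. There is no way to extract those higher derivatives from the hypothesis by embedding alone; a genuinely different mechanism is required.

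The missing idea is a parabolic regularity bootstrap. Once the $C^3$ (or $C^4$) bound is in hand and is small (for $N$ sufficiently negative), the evolution equation for $\overline{\nabla}^m w$, $m\ge 1$, takes the form
\begin{equation*}
\partial_t(\overline{\nabla}^m w)=L_m(\overline{\nabla}^m w)+F_m(1,w,\dots,\overline{\nabla}^{m+1}w),
\end{equation*}
where $L_m$ is uniformly elliptic (because $g$ stays close to $\overline{g}$) and $F_m$ is linear with coefficients bounded in space and time; the restriction $m\ge 1$ matters because the equation for $w$ itself contains a term quadratic in $\overline{\nabla}w$. Interior parabolic estimates (the paper cites \cite[Section 7, Thm 6.1]{Lq}), together with $w(t_0,\cdot)=0$ and the spatial decay, then bound $|\overline{\nabla}^{m+1}w|_\epsilon$ in terms of the assumed bounds on $|\overline{\nabla}^j w|_\epsilon$ for $j\le m$, with constants tending to $0$ as the input constants do. Induction on $m$ starting from the Sobolev-derived base case completes the proof. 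Without this bootstrap your argument proves a strictly weaker statement than the one the rest of Section \ref{construct} relies on.
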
			

\begin{proof}
	The limit $\delta(t)\rightarrow 0$ as $t\rightarrow -\infty$ and
	Lemma \ref{sobelevineq} gives 
	$$||w||_{C^3}(t) \leq C=C(N),$$
	for all $t\in [t_0, t_0+(k-1)T]$, where $C$ can be made as small as we like by making $N$ sufficiently negative. Suppose 
	$$|\overline{\nabla}^m w|_\epsilon(t) \leq C_m(C),$$
	for $m\geq 3$ and $t\in [t_0, t_0+(k-1)T]$, with $C_m(C) \rightarrow 0$ as $C\rightarrow 0$.  Note that we have the commutation relation for an arbitrary tensors $T$,
	$$[\overline{\Delta}, \overline{\nabla}]T= \overline{\nabla}\overline{R} * T+ \overline{R}*\overline{\nabla}T.$$
	Therefore,
	\begin{align*}
		\overline{\nabla}(g^{\alpha\alpha}\overline{\nabla}^2_{\alpha\alpha})&= \overline{\nabla}(\overline{g}^{\alpha\alpha}\overline{\nabla}^2_{\alpha\alpha}w+ (g^{\alpha\alpha}-\overline{g}^{\alpha\alpha})\overline{\nabla}^2_{\alpha\alpha}w)\\
		&= \overline{\Delta}(\overline{\nabla}w)+ (g^{\alpha\alpha}-\overline{g}^{\alpha\alpha})\overline{\nabla}\overline{\nabla}^2_{\alpha\alpha}w+ \overline{\nabla}(g^{\alpha\alpha}-\overline{g}^{\alpha\alpha})\overline{\nabla}^2_{\alpha\alpha}w+ \overline{\nabla}R*w+R*\overline{\nabla}w\\
		&= \overline{\Delta}(\overline{\nabla}w)+ (g^{\alpha\alpha}-\overline{g}^{\alpha\alpha})\overline{\nabla}\overline{\nabla}^2_{\alpha\alpha}w+ \overline{\nabla}(g^{\alpha\alpha}-\overline{g}^{\alpha\alpha})\overline{\nabla}^2_{\alpha\alpha}w+ \overline{\nabla}R*w+R*\overline{\nabla}w\\
		&= L(\overline{\nabla} w)+  \overline{\nabla}R*w+R*\overline{\nabla}w.
	\end{align*}
	for some uniformly elliptic in space and time linear  operator $L$ with uniformly bounded coefficients by making $C_0(N), C_1(N)$ sufficient small by making $N$ sufficiently negative. 
	Thus, the time-evolution equation for $\overline{\nabla} w$ is
	\begin{equation*}
		\partial_{t} (\overline{\nabla} w)= L(\overline{\nabla}^m w)+ F(1,w,...,\overline{\nabla}^{2}w),
	\end{equation*}
	where $F$ is linear function in $1,w,...,\overline{\nabla}^{2}w$ with bounded (in time and space) coefficients (i.e a homogenous linear with bounded coefficients).
	Similarly, for $C_m$ small enough, the time-evolution equation for $\overline{\nabla}^m w$ is
	\begin{equation}\label{nable}
		\partial_{t} (\overline{\nabla}^m w)= L_m (\overline{\nabla}^m w)+ F_m(1,w,...,\overline{\nabla}^{m+1}w),
	\end{equation}
	where $L_m$ is uniformly elliptic and $F_m$ is linear function in $1,w,...,\overline{\nabla}^{m+1}w$ with bounded (in time and space) coefficients. It is crucial that $m\geq 1$ as the evolution equation for $w$ contains a term quadratic in $\overline{\nabla}w$.

	Thus, by \cite[Section 7, Thm 6.1]{Lq}, tells us, since $w(t_0, \cdot)=0$ and $w(.,\infty)=0$, that 
	$$|\overline{\nabla}^{m+1} w|_\epsilon\leq C_{m+1}(C),$$
	for all $t\in [t_0, t_0+(k-1)T]$ and where $C_{m+1}(C)\rightarrow 0$ as $C\rightarrow 0$.
	Therefore, by induction, the statement of the lemma follows.
\end{proof}

The following lemma says that if we can bound the $C^m$ bounds on $w(t)$ for $t\in [t_0, t_0+(k-1)T]$, then we can bound it for another time $T$, provided $T$ is small enough.
\begin{lemma}\label{shortC^mests}
	Suppose that $w(t)$ exists for $t\in[t_0, t_0+(k-1)T]$.
	Let $i\geq 3$. If
	$$||w||^{(\epsilon)}_{C^i}(t) \leq C,$$
	for all $t\in [t_0, t_0+(k-1)T]$ and $C, T$ small enough, then $w$ exists for another time $T$ and 
	$$||w||^{(\epsilon)}_{C^i}(t) \leq C_i(C),$$
	for all $t\in [t_0+(k-1)T, t_0+kT]$, where $C_i(C)\rightarrow 0$ as $C\rightarrow 0$.
\end{lemma}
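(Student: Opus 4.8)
The plan is to restart the Ricci--de Turck flow at time $t_1 := t_0 + (k-1)T$ and combine the short-time existence of Theorem \ref{shorttimeests} with a parabolic bootstrap modelled on the proof of Lemma \ref{longtimeests}. First I would note that, writing $g(t_1) = g_0 + \delta(t_1)h + w(t_1)$, the hypothesis $\|w\|_{C^i}^{(\epsilon)}(t_1) \leq C$ with $i \geq 3$ — in particular $C^2$-control of $w$ — together with the uniform-in-$\epsilon$ curvature bound on $\overline g$ and the decay estimates on $h$ from Proposition \ref{boundsonh}, gives $|g(t_1) - \overline g|_\epsilon \leq C'(C)$ and
\[
|Rm(g(t_1))|_{g(t_1)} \leq K
\]
for a constant $K = K(C) > 0$ that stays bounded as $C \to 0$; this uses only that curvature involves at most two derivatives of the metric and that the background pieces $g_0$, $\delta(t_1)h$ contribute a curvature bounded uniformly in $\epsilon$. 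In particular $g(t_1)$ is complete and uniformly equivalent to $\overline g$.

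Feeding $g(t_1)$ into Theorem \ref{shorttimeests} (which asks only for a curvature bound on the initial metric), the Ricci--de Turck flow continues past $t_1$ for a definite time $T = T(K) > 0$ and, after shrinking $T = T(\rho, K)$, satisfies $|g(t) - g(t_1)|_{g(t_1)} \leq \rho$ and $|\overline\nabla g|_{g(t_1)}, |\partial_t g|_{g(t_1)} \leq C_1$ on $[t_1, t_1 + T]$. By the uniqueness assertion of Theorem \ref{shorttimeests} this is the continuation of our solution, so $w = g - g_0 - \delta h$ exists on $[t_0, t_0 + kT]$; taking $\rho$ (hence $C$, $T$) small and using the bounds on $h$ keeps $g$, $\overline g$ and $g_0$ uniformly equivalent throughout $[t_1, t_1 + T]$, so the norms $\|\cdot\|^{(\epsilon)}$, $\|\cdot\|^{(0)}$ and $\|\cdot\|$ are interchangeable there.

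To upgrade the $C^1$-control of $g$ just obtained to $C^i$-control of $w$, I would commute derivatives through $\partial_t w = E(w) + F$ exactly as in Lemma \ref{longtimeests}: each $\overline\nabla^m w$ then satisfies a uniformly parabolic linear system
\[
\partial_t(\overline\nabla^m w) = L_m(\overline\nabla^m w) + F_m(1, w, \dots, \overline\nabla^{m+1} w),
\]
with coefficients controlled by the curvature of $\overline g$, the decay of $h$ and the $C^1$-bound on $g$, and in which the only genuinely quadratic term of (\ref{weq}) — namely $\tfrac12(g^{ll})^2(\overline\nabla_r w_{ll} + \overline\nabla_r\delta(t)h_{ll})^2$ in the $rr$-component — is quadratic in $\overline\nabla w$ and so enters $F_1$ as a controlled forcing precisely because the hypothesis already supplies a $C^i$ (hence $C^1$) bound on the slice $\{t_1\}$. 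Interior parabolic estimates applied on $[t_0, t_1 + T]$ (as in Lemma \ref{longtimeests}, via \cite{Lq}), using $\|w\|_{C^i}^{(\epsilon)} \leq C$ on $[t_0, t_1]$, the smallness of $\rho$, and $\|F\|_{C^i} \lesssim \epsilon + \delta$, then yield $\|w\|_{C^i}^{(\epsilon)}(t) \leq C_i(C)$ on $[t_1, t_1+T]$, with $C_i(C) \to 0$ as $C \to 0$ — the $F$-driven contribution over the short time $T$ being absorbed by choosing the fixed constants $T$ and $\epsilon'$ small. The main obstacle is this last step: one must arrange the derivative-commutation so that the quadratic gradient term never generates an uncontrolled $(\overline\nabla^{m+1}w)^2$, which is exactly why the statement is restricted to $i \geq 3$ and why the evolution equations (\ref{nable}) for $\overline\nabla^m w$ with $m \geq 1$, rather than the equation for $w$ itself, are the right objects to iterate.
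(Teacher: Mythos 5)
Your proposal is correct and follows essentially the same route as the paper: use the $C^3$ (hence curvature) control at $t_0+(k-1)T$ to invoke Theorem \ref{shorttimeests} for existence over another time $T$ together with $C^0$ and $C^1$ bounds, then bootstrap the higher derivatives via the uniformly parabolic linear systems for $\overline{\nabla}^m w$ exactly as in Lemma \ref{longtimeests}. Your write-up simply spells out in more detail the steps the paper leaves implicit (the curvature bound from two derivatives of the metric, the uniqueness-based continuation, and the handling of the quadratic gradient term).
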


\begin{proof}
	For $C>0$ sufficiently small, the $C^3$ bounds mean, by Lemma \ref{shorttimeests}, $w(t)$ exists for another time $T$ we have a bound on $|w|_\epsilon$ and $|\overline{\nabla} w|_\epsilon$ on $[t_0+(k-1)T, t_0+kT]$ for $T$ small enough. Following the proof of Lemma \ref{longtimeests} proves  the lemma: once we have bounds 
	$$|\overline{\nabla}^iw|_{\epsilon}\leq C,$$
	for $i\leq m$, the equation for $\overline{\nabla}^m w$ for $m>0$ is 
	\begin{equation}
		\partial_{t} (\overline{\nabla}^m w)= L (\overline{\nabla}^m w)+ F_m\left(1,w,...,\overline{\nabla}^{m+1}w\right),
	\end{equation}
	where $L$ is uniformly elliptic and $F$ is linear function in $1,w,...,\overline{\nabla}^{m+1}w$ with bounded coefficients  on $M\times [t_0, t_0+kT]$, assuming $C$ is small enough. Thus, $$|\overline{\nabla}^{m+1}w|_{\epsilon}\leq C_{m+1},$$ on  $M\times [t_0, t_0+kT]$.
\end{proof}		
The following lemma extends the bounds of Lemmas \ref{longtimeests}, \ref{shortC^mests} to derivatives in time.
\begin{lemma}\label{i,m}
	Suppose $N<0$ is sufficiently negative and $t_0+kT<N$.
	Suppose $w(t)$ exists for $t\in[t_0, t_0+kT]$.
	If 
	$$\left|\left| \overline{\nabla}^i\frac{d^j w }{dt^j}\right|\right|_2^{\epsilon}(t) \leq M_{i,j}\delta(t), \qquad i+j\leq 7,$$
	for all $t\in [t_0, t_0+(k-1)T]$, then  for all $q,m$ 
	there is a $C_{q,m}=C_{q,m}(N)>0$ such that
	$$\left|\frac{d^q \overline{\nabla}^m w}{dt^q}\right|_\epsilon(t) \leq C_{q,m},$$
	for all $t\in [t_0, t_0+kT]$, where $C_{q,m}(N)\rightarrow 0$ as $N\rightarrow -\infty$.
\end{lemma}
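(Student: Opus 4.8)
The plan is to first pin down the pure spatial $C^m$ bounds on all of $[t_0,t_0+kT]$, and then to trade each time derivative for two spatial derivatives using the evolution equation $\partial_t w=E(w)+F$, bootstrapping on the number of time derivatives.

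First I would handle the case $q=0$. Applying the hypothesis with $j=0$ gives $\|\overline{\nabla}^i w\|_2^{(\epsilon)}(t)\le M_{i,0}\delta(t)$ for $i\le 7$ on $[t_0,t_0+(k-1)T]$, so Lemma \ref{longtimeests} produces, for $N$ sufficiently negative, constants $C_m=C_m(N)$ with $C_m(N)\to 0$ as $N\to-\infty$ and $|\overline{\nabla}^m w|_\epsilon(t)\le C_m$ for every $m$ and every $t\in[t_0,t_0+(k-1)T]$. Making these as small as needed by decreasing $N$, Lemma \ref{shortC^mests} extends them by a further time $T$, so that $|\overline{\nabla}^m w|_\epsilon(t)\le C_m(N)$ holds on all of $[t_0,t_0+kT]$ with $C_m(N)\to 0$ as $N\to-\infty$.

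Then I would induct on the number $q$ of time derivatives, the case $q=0$ being the previous step. Assuming $|\tfrac{d^j}{dt^j}\overline{\nabla}^m w|_\epsilon(t)\le C_{j,m}(N)$ for all $j\le q$, all $m$ and all $t\in[t_0,t_0+kT]$, with $C_{j,m}(N)\to 0$ as $N\to-\infty$, one differentiates $\partial_t w=E(w)+F$ in time $q$ times to get $\tfrac{d^{q+1}w}{dt^{q+1}}=\tfrac{d^q}{dt^q}E(w)+\tfrac{d^q F}{dt^q}$. The point is that $E(w)$ is a polynomial of degree at most two in $w,\overline{\nabla}w,\overline{\nabla}^2 w$ whose coefficients are smooth functions of the \emph{time-independent} tensors $\overline{g},\overline{R},h$ (recall $\epsilon=e^{\lambda t_0}$ is fixed, so $\overline{\nabla}$ commutes with $\partial_t$) and of $g=g_0+\delta(t)h+w$, $\delta(t)$, $\epsilon$; hence the time derivative of a coefficient is a finite sum of terms each carrying a factor $\tfrac{d^b}{dt^b}w$ with $b\le q$ or a power of $\delta(t)$ (using $\tfrac{d^a}{dt^a}\delta(t)^\ell=(\ell\lambda)^a\delta(t)^\ell$). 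Expanding $\overline{\nabla}^m\tfrac{d^q}{dt^q}E(w)$ by the product rule then gives a finite sum of products of quantities $\tfrac{d^{b}}{dt^{b}}\overline{\nabla}^{i}w$ with $b\le q$, $i\le m+2$, against bounded coefficients and bounded powers of $\delta(t)$; each factor is controlled by the inductive hypothesis and the first step, and, since every monomial of $E(w)$ contains a $w$-type factor, every summand retains at least one small factor (a $C_i(N)$, a $C_{b,i}(N)$, or a power of $\delta(t)\le e^{\lambda N}$). The forcing $F=d\delta^2+e\epsilon\delta(t)+f\epsilon^2+\overline{f}\epsilon$ is handled directly from the recorded bounds $|\overline{\nabla}^m\tfrac{d^p}{dt^p}d|_\epsilon,|\overline{\nabla}^m\tfrac{d^p}{dt^p}e|_\epsilon,|\overline{\nabla}^m\tfrac{d^p}{dt^p}f|_\epsilon\le V$ together with $\overline{f}$ being time-independent and smooth, giving $|\overline{\nabla}^m\tfrac{d^q F}{dt^q}|_\epsilon\le C(V,q,m)\bigl(\delta(t)^2+\epsilon\delta(t)+\epsilon^2+\epsilon\bigr)\le C(V,q,m)e^{\lambda N}$ since $\epsilon\le\delta(t)\le e^{\lambda N}$ on $[t_0,t_0+kT]$. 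Putting these together yields $|\overline{\nabla}^m\tfrac{d^{q+1}w}{dt^{q+1}}|_\epsilon(t)\le C_{q+1,m}(N)$ with $C_{q+1,m}(N)\to 0$, which closes the induction.

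The main obstacle will be purely combinatorial: one has to check that repeatedly differentiating the quasilinear expression $E(w)$ in time never requires a derivative $\tfrac{d^b}{dt^b}\overline{\nabla}^i w$ outside the inductive hypothesis (i.e.\ with $b>q$), and that every term produced retains a small factor so the constants genuinely tend to $0$ as $N\to-\infty$. This uses the structural facts already collected at the end of Subsection~\ref{groundwork}: $\overline{g},\overline{R},h$ are time-independent, the coefficients $b,c,\tilde c,\overline b,\overline c$ are bounded with $c,\tilde c,\overline c=O(\delta(t))$, and $\epsilon\le\delta(t)$ on the relevant interval.
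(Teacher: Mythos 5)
Your proposal is correct and follows essentially the same route as the paper: first obtain the pure spatial $C^m$ bounds on all of $[t_0,t_0+kT]$ from Lemmas \ref{longtimeests} and \ref{shortC^mests}, then use the evolution equation to rewrite each time derivative of $w$ in terms of spatial derivatives and lower-order time derivatives. Your version is somewhat more explicit than the paper's (the induction on $q$ and the bookkeeping showing every term retains a small factor so that $C_{q,m}(N)\to 0$), but the underlying argument is the same.
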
	

\begin{proof}
	We first note that if $N$ is sufficiently negative (depending on the $M_{i,j}$), then $M_{i,j}\delta(t)\leq 1$ for all $t\in [t_0, t_0+kT]$.  
	The equation (\ref{weq}) and the bounds on the derivatives of $w$ with respect to space imply we have a bound on $|\partial_{t} w|_{\epsilon}$. Similarly, we can write the evolution equation for 
	$\frac{d^q \overline{\nabla}^m w}{dt^q}$ as		
	$$\partial_t \left(\frac{d^q \nabla^m}{dt^q}w\right)= F_{q,m}(1, \overline{\nabla}w, ...,  \overline{\nabla}^p w),$$
	for $p$ less than some finite number. Consequently, by Lemma \ref{longtimeests} and \ref{shortC^mests}, $$\left|\frac{d^q \nabla^m w}{dt^q}\right|_{\epsilon} \leq C_{q,m}$$.
\end{proof}

\begin{proof}(of Proposition \ref{lastC^m})
	Lemma \ref{sobelevineq} gives 
	$$||w||_{C^3}^\epsilon(t) \leq C\delta(t),$$
	for all $t\in [t_0, t_0+(k-1)T]$, where $C=C(M_j)>0$. If $N$ is sufficiently negative, $|Rm(g(t))|_{g(t)}\leq K$, and so by Lemma \ref{shorttimeests}, $$|w-w(t_0+(k-1))|_{g(t_0+(k-1))}, |\overline{\nabla} w|_{g(t_0+(k-1))}(t) \leq \epsilon \leq \delta(t),$$ for $t\in [t_0+(k-1)T, t_0+kT]$. Thus, there is $C'(N)>0$ such that
	$$|w|_\epsilon, |\overline{\nabla} w|_\epsilon \leq C'\delta(t),$$
	for $t\in [t_0+(k-1)T, t_0+kT].$

	Suppose that
	$$||w||_{C^m}^\epsilon(t) \leq C_m(C),$$
	for $m\geq 3$ and $t\in [t_0, t_0+(k-1)T]$, with $C_m \rightarrow 0$ as $C\rightarrow 0$. By Lemma \ref{i,m} we have,
	\begin{equation}
		\partial_{t} (\overline{\nabla}^m w)= L_m (\overline{\nabla}^m w)+ F_m(1,w,...,\overline{\nabla}^{m+1}w),
	\end{equation}
	where $F$ is linear function in $1,w,...,\overline{\nabla}^{m+1}w$ with bounded coefficients (in time and space). Furthermore, the norm coefficient of the term  $\overline{\nabla}^{m+1}w$ is less or equal to $C\delta(t)$ for some $C=C(C_i;i\leq m)$.
	The proof of \cite[Chapter 7, Thm 6.1]{Lq} tells us, since $w(t_0, \cdot)=0$ and $w(\cdot,\infty)=0$, that 
	$$\left|\overline{\nabla}^{m+1} w\right|\leq C_{m+1}(C)\delta(t),$$
	for all $t\in [t_0, t_0+(k-1)T]$.
	Therefore, by induction, $|\overline{\nabla}^{m}w|_\epsilon \leq C_{m,0}\delta(t)$ for all $m$. The time derivatives bounds can be obtained by writing them in terms of space derivatives. 
\end{proof}

\subsection{$H^m$ bounds}\label{L^2bounds}

Suppose $w(t)$ for $t\in[t_0, t_0+kT]$.
Using the method of Proposition \ref{vdecay}, the bounds of Propositions \ref{boundsonh}, \ref{lastC^m}, it follows that if 
$$\left|\left|\frac{d^j}{dt^j} \overline{\nabla}^iw\right|\right|_2^{\epsilon}(t) \leq M_{i+j}\delta(t), \qquad i,j\leq 7,$$
for $t\in[t_0, t_0+(k-1)T]$, 
then 
\begin{equation}\label{wrbound}
	\left|\frac{d^p}{dt^p} \overline{\nabla}^m w\right|_\epsilon(r)\leq C_{m,p,l} \delta(t)r^{-l},
\end{equation}
for all $l$.
Therefore
$$\left|\left|\frac{d^j}{dt^j} \overline{\nabla}^iw\right|\right|_2^{\epsilon}(t),$$
is finite for all $t\in[t_0+(k-1)T, t_0+kT]$.

\begin{theorem}\label{l^2long}
	Suppose $w(t)$ exists for $t\in[t_0, t_0+kT]$.
	There exists $M_{i,j},T>0$ and $N\leq 0$ such that if 
	$$\left|\left|\frac{d^j}{dt^j} \overline{\nabla}^iw\right|\right|_2^{\epsilon}(t) \leq M_{i,j}\delta(t), \qquad i+j\leq 7,$$
	for all $t\in [t_0, t_0+(k-1)T]$ and $t_0+kT<N$, then 
	$$\left|\left|\frac{d^j}{dt^j} \overline{\nabla}^iw\right|\right|_2^{\epsilon}(t) \leq M_{i,j}\delta(t),\qquad i+j\leq 7,$$
	for all $t\in[t_0, t_0+kT]$. 
\end{theorem}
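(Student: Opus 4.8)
The plan is a continuity (bootstrap) argument in time. Assume the hypothesised bounds $\|\tfrac{d^j}{dt^j}\overline\nabla^iw\|_2^{(\epsilon)}(t)\le M_{i,j}\delta(t)$, $i+j\le 7$, hold on $[t_0,t_0+(k-1)T]$. First I would feed these into Proposition \ref{lastC^m} to upgrade them to the pointwise bounds $|\tfrac{d^p}{dt^p}\overline\nabla^mw|_\epsilon\le C_{m,p}\delta(t)$ on all of $[t_0,t_0+kT]$; in particular the curvature of $g(t)$ stays below a fixed $K$, so Theorem \ref{shorttimeests} guarantees $w$ is defined on $[t_0,t_0+kT]$, and the weighted decay estimate \eqref{wrbound} makes every $\|\tfrac{d^j}{dt^j}\overline\nabla^iw\|_2^{(\epsilon)}$ finite and continuous in $t$ there. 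Writing $t_\ast$ for the supremum of the times $\tau\le t_0+kT$ up to which all the bounds hold, it then suffices to show that on $[t_0,t_\ast]$ they in fact hold with each $M_{i,j}$ replaced by $\tfrac12M_{i,j}$; since $w(t_0)=0$, a standard connectedness argument forces $t_\ast=t_0+kT$ and finishes the proof.

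The heart of the matter is a Duhamel estimate built on the linearisation. Write $\partial_tw=\Delta^L_{g_0}w+\mathcal N(w)+F$, where $\mathcal N(w)=E(w)-\Delta^L_{g_0}w$ collects the genuinely perturbative terms — the terms quadratic in $w$, the terms $c^{ii}_{kk}w_{ii}$, $\tilde c_{kk}w_{kk}$, $\overline c^{\,ll}_r\overline\nabla_rw_{ll}$ and the quasilinear piece $(g^{\alpha\alpha}-\overline g^{\alpha\alpha})\overline\nabla^2_{\alpha\alpha}w$, all with coefficients $O(\delta(t))$, and $(\Delta^L_{\overline g}-\Delta^L_{g_0})w$ with coefficient $O(\epsilon)$ — and $F=d\delta^2+e\epsilon\delta+f\epsilon^2+\overline f\epsilon$, with $\overline f$ the (size $O(\epsilon)$) Ricci curvature of $\overline g$. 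The decisive structural fact is that the semigroup $S(\tau)=e^{\tau\Delta^L_{g_0}}$ satisfies $\|S(\tau)\|_{H^m\to H^m}\le Ce^{\lambda\tau}$ for every $m$ and $\tau\ge 0$: by the minimality of $\mathbf{a}$ on $\mathbb S^1$ established in Theorem \ref{h}, the spectrum of the elliptic self-adjoint operator $\Delta^L_{g_0}$ is contained in $(-\infty,\lambda]$, and elliptic regularity on the bounded-geometry manifold $(M,g_0)$ promotes the resulting $L^2$ bound to all $H^m$, together with the smoothing bounds $\|S(\tau)\|_{H^m\to H^{m+k}}\lesssim\tau^{-k/2}e^{\lambda\tau}$. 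Applying these mapping properties in $w(t)=\int_{t_0}^tS(t-s)\big(\mathcal N(w)+F\big)(s)\,ds$ — after putting the second-order perturbative terms in divergence form so that the apparent loss of derivatives in $\mathcal N(w)$ is absorbed by the $\tau^{-1/2}$ smoothing — together with the bootstrap bound $\|w\|^{(\epsilon)}_{H^7}(s)\le M\delta(s)$ ($M:=\max_{i,j}M_{i,j}$), the $C^m$ bounds of Proposition \ref{lastC^m}, and $\|F\|_{H^7}^{(\epsilon)}\le C(\delta(s)^2+\epsilon)$ (from the bounds on $d,e,f$ and $\|\overline f\|_{H^7}^{(\epsilon)}<\infty$), I expect to reach $\|w(t)\|_{H^7}^{(\epsilon)}\le\int_{t_0}^tCe^{\lambda(t-s)}\big(M^2\delta(s)^2+\epsilon\big)\,ds$, with the time derivatives handled the same way after using the equation to trade each $\partial_t$ for spatial derivatives.

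It then remains to carry out elementary integrations. Since $\delta(s)=e^{\lambda s}$ and $\epsilon=e^{\lambda t_0}$,
\[
\int_{t_0}^te^{\lambda(t-s)}\delta(s)^2\,ds=e^{\lambda t}\!\int_{t_0}^te^{\lambda s}\,ds\le\frac{\delta(t)^2}{\lambda}\le\frac{\delta(N)}{\lambda}\,\delta(t),\qquad\int_{t_0}^te^{\lambda(t-s)}\epsilon\,ds=\epsilon\,\frac{e^{\lambda(t-t_0)}-1}{\lambda}\le\frac{e^{\lambda t}}{\lambda}=\frac{\delta(t)}{\lambda},
\]
the last identity using $\epsilon\,e^{\lambda(t-t_0)}=e^{\lambda t_0}e^{\lambda(t-t_0)}=e^{\lambda t}$. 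Hence $\|\tfrac{d^j}{dt^j}\overline\nabla^iw\|_2^{(\epsilon)}(t)\le\big(\tfrac{C}{\lambda}+\tfrac{CM^2}{\lambda}\delta(N)\big)\delta(t)$, where $C$ depends only on $g_{\mathrm{Bolt}}$, on $\lambda$ and on $\|\overline f\|$ — not on $M$, and, once $N$ lies below a fixed threshold, not on $N$. Taking all $M_{i,j}$ equal to a common $M\ge 4C/\lambda$, then $N$ so negative that $CM\delta(N)/\lambda\le\tfrac14$, makes the right-hand side $\le\tfrac12M\delta(t)$, the required strict improvement; $T$ is then fixed $\le T(\epsilon')$ (Theorem \ref{shorttimeests}) and small enough for Lemmas \ref{vdecay} and \ref{shortC^mests}, a choice independent of $k$ and $t_0$ since all curvature bounds involved are uniform.

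\textbf{The main obstacle} is precisely the fact — which is the whole content of Theorems \ref{<0} and \ref{2ndthm} — that $\Delta^L_{g_0}\not\le0$: the linearised flow is expanding, at the rate $\lambda$, rather than contracting, so one cannot hope for decay of $\|w\|$ and must instead exploit the exact balance between this expansion rate and the size $\epsilon=\delta(t_0)=e^{\lambda t_0}$ of the inhomogeneity (the $\overline f\epsilon$ term, present only because $g_0$ is merely Ricci-flat, not flat): convolving the kernel $e^{\lambda(t-s)}$ against a forcing of size $\epsilon$ returns exactly $\delta(t)/\lambda$, because $\epsilon e^{-\lambda t_0}=1$. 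This forces one to track the sharp rate $\lambda$ throughout — a cruder bound $e^{C(t-s)}$ with $C>\lambda$ would, over the arbitrarily long interval $[t_0,t_0+kT]$, turn $\delta(t)$ into $\delta(t)e^{(C-\lambda)(t-t_0)}$ and destroy the estimate — and that is exactly why the variational construction of $\lambda$ in Theorem \ref{h}, giving $\operatorname{spec}\Delta^L_{g_0}\subset(-\infty,\lambda]$ and hence the sharp operator bound on $S(\tau)$ in every $H^m$, is indispensable. The remaining points — validity of the integrations by parts, the divergence-form treatment of the quasilinear second-order terms, the uniformity of $T$ in $k$ — are routine given Section \ref{groundwork}, Proposition \ref{lastC^m} and Lemma \ref{vdecay}.
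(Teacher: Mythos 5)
Your proposal is correct in substance but runs along a genuinely different track from the paper. The paper proves Theorem \ref{l^2long} by energy estimates: it computes $\partial_t(\lVert \partial_t^p\overline{\nabla}^i w\rVert_2^{(\epsilon)})^2$, bounds the perturbative terms using the $C^m$ control of Proposition \ref{lastC^m} and the decay (\ref{wrbound}), invokes the variational definition of $\lambda$ in the form $\int_M -|\dot{\nabla}w|^2+2\dot{R}^{i\ k}_{\ i\ k}w_{ii}w_{kk}\le \lambda\lVert w\rVert_2^2$, and closes with Gronwall's inequality plus the parabolic $L^2$ estimates of Evans on each subinterval $[t_0+(k-1)T,t_0+kT]$ (this is where the smallness of $T$ is actually consumed). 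You instead integrate the Duhamel formula against the semigroup $e^{\tau\Delta^L_{g_0}}$, using $\operatorname{spec}\Delta^L_{g_0}\subset(-\infty,\lambda]$ — which is indeed exactly the content of Theorem \ref{h}, and is the same spectral input the paper uses in its Gronwall step — together with $H^m\to H^m$ and $\tau^{-1/2}$-smoothing bounds. The two mechanisms are equivalent at the core: both reduce to the convolution identities $\int_{t_0}^te^{\lambda(t-s)}\delta(s)^2\,ds\le\delta(N)\delta(t)/\lambda$ and $\int_{t_0}^te^{\lambda(t-s)}\epsilon\,ds\le\delta(t)/\lambda$, and both order the choices as $M$ first, then $N$ and $T$. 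What your route buys is a uniform packaging of the whole derivative hierarchy and a transparent display of why the sharp rate $\lambda$ (rather than any $C>\lambda$) is indispensable; what it costs is the need to actually establish the semigroup mapping properties on the non-compact $(M,g_{\mathrm{Bolt}})$ (self-adjointness, commutation of $e^{\tau\Delta^L}$ with powers of $\Delta^L$, elliptic regularity in bounded geometry), machinery the paper's Gronwall argument avoids entirely.

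Two points where you are too quick. First, "trading each $\partial_t$ for spatial derivatives" does not close within a fixed $i+j\le 7$ hierarchy, since each $\partial_t$ costs two $\overline{\nabla}$'s; the correct move — and the one the paper makes — is to run the same estimate on the evolution equation satisfied by $\partial_t^pw$ itself, noting that its initial datum $\partial_t^pw(t_0)$ is no longer zero but is $O(\epsilon)$, which the factor $e^{\lambda(t-t_0)}\epsilon=\delta(t)$ absorbs. Second, for the second-order perturbative term with coefficient $O(\epsilon)$, the smoothing convolution produces $\epsilon\int_{t_0}^t(t-s)^{-1/2}e^{\lambda(t-s)}e^{\lambda s}\,ds=2\epsilon e^{\lambda t}\sqrt{t-t_0}$, and you need the elementary but non-obvious observation that $e^{\lambda t_0}\sqrt{t-t_0}\le C(\lambda)e^{\lambda N}\to 0$ as $N\to-\infty$; without it the estimate appears to degenerate as $t_0\to-\infty$. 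Neither point is a gap in the strategy, but both need to be written out.
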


\begin{proof}
	By Proposition \ref{lastC^m} and the analysis of \ref{groundwork}, for $t\in[t_0, t_0+kT]$ we have the following bounds:
	\begin{align*}
		&\left|\overline{\nabla}^i \partial_{t}^p w\right|_{\epsilon}\leq C_{i,p}\delta(t),\\
		&\left|\frac{d^i\nabla^m}{dt^i} b\right|_\epsilon \leq V\\
		&\left|\frac{d^i\nabla^m}{dt^i} \overline{c}\right|_\epsilon, \left|\frac{d^i\nabla^m}{dt^i} c\right|_\epsilon, \left|\frac{d^i\nabla^m}{dt^i} \tilde{c}\right|_\epsilon \leq C_{i,m}\delta(t),\\
		& \left|\left|\frac{d^i \nabla^m}{dt^i} d\right|\right|_2^\epsilon, \left|\left|\frac{d^i \nabla^m}{dt^i} e\right|\right|_2^\epsilon, 
		\left|\left|\frac{d^i \nabla^m}{dt^i} f\right|\right|_2^\epsilon, \left|\left|\frac{d^i \nabla^m}{dt^i} \overline{f}\right|\right|_2^\epsilon\leq V,\\
		&\left|\frac{d^i \nabla^m}{dt^i} d\right|\epsilon, \left|\frac{d^i \nabla^m}{dt^i} e\right|\epsilon, \left|\frac{d^i \nabla^m}{dt^i} f\right|_\epsilon, \left|\frac{d^i \nabla^m}{dt^i} \overline{f}\right|_\epsilon \leq V.
	\end{align*}
	
	Also $\overline{f}$ is time-independent.
	We will only need the bounds $$\left|\overline{\nabla}^i \partial_{t}^p w\right|_{\epsilon}\leq C_{i,p}\delta(t), \left|\frac{d^i\nabla^m}{dt^i} \overline{c}\right|_\epsilon, \left|\frac{d^i\nabla^m}{dt^i} c\right|_\epsilon, \left|\frac{d^i\nabla^m}{dt^i} \tilde{c}\right|_\epsilon \leq C_{i,m}\delta(t),$$
	for indices up to a finite number. So can assume $C_{i,p}, C_{i,m} \leq V$.
	
	These bounds imply that all the inhomogenous linear equations arising from the time-evolution equations of $w$ and its derivatives in time and space will have bounded coefficients (uniformly in choice of $t_0$ and $N$).
	
	We will prove the following bounds:
	\begin{align*}
		&(||w_{t^p}||_2^{(\epsilon)})^2\leq C(M_{j,m}, V, \lambda)\delta^3(t)+ C(V, \lambda)\left(1+\sum_{j\leq 1, m\leq p\leq 7}M_{j,m}\right),\\
		&(||\overline{\nabla}^i w_{t^p}||_2^{(\epsilon)})^2 \leq C(M_{j,m},V,\lambda)\delta^3(t)+C(M_{j,m},V,\lambda)T\delta^2(t)+Q(V, \lambda, M_{j,m} ; j<i, m\leq p),
	\end{align*}
	for $i\geq 1$ and 
	for $Q$ a quadratic polynomial in $V, \lambda, M_{j,m}$ for $j<i, m\leq p$.
	So we will have proven the theorem if first we choose the $M_{j,p}$ so that 
	\begin{align}\label{Mbound}
		&2C(V, \lambda)\left(1+\sum_{j\leq 1, m\leq p\leq 7} M_{j,m}\right)\leq M^2_{0,p} \qquad p\leq 7,\\
		&2Q(V, \lambda, M_{j,m} ; j<i, m\leq p) \leq M^2_{i,p},
	\end{align}
	for $i\geq 1, i+p\leq 7.$ This system of inequalities is satisfied for large enough $M_{j,m}$.
	Then choose $T$ to be small enough and $N$ negative enough.
	
	First we will prove the bound on $||w||_2^{(\epsilon)}.$ Then we prove the bound on $||\partial_{t} w||_2^{(\epsilon)}$, and explain how to bound $||\partial_{t}^p w||_2^{(\epsilon)}$. Then the bound on $||\overline{\nabla} w||_2^{(\epsilon)}$, then explain how to bound the higher derivatives $||\overline{\nabla}^m \partial_{t}^p w||_2^{(\epsilon)}$.

	Recall the time evolution equation for $w$:
	\begin{equation*}
		\partial_t w= E(w)+F, \qquad w=0 \text{ on } M\times \{t_0\}, 
	\end{equation*}
	where
	\begin{align*}
		&E(w)_{kk}=g^{\alpha\alpha}\overline{\nabla}_\alpha\overline{\nabla}_\alpha w_{kk}+2\overline{R}^{i\mbox{ }k}_{\mbox{ }i\mbox{ }k} w_{ii}+\delta_{rk}((\overline{b}^{ll}\overline{\nabla}_rw_{ll})^2+\overline{c}^{ll}_r\overline{\nabla}_r w_{ll})\\
		&\hspace{4mm}+b^{ii}w_{ii}w_{kk}+c^{ii}_{kk}w_{ii}+ \tilde{c}_{kk}w_{kk},\\
		&F=d\delta^2+ e\epsilon\delta(t)+f\epsilon^2+\overline{f}\epsilon.
	\end{align*}
	Using the bounds stated at the start of the proof of Theorem \ref{l^2long}, we have the following bounds for $t\in [t_0, t_0+(k-1)T]$,
	\begin{align*}
		&||\delta_{rk}((\overline{b}^{ll}\overline{\nabla}_rw_{ll})^2+\overline{c}^{ll}_r\overline{\nabla}_r w_{ll})+b^{ii}w_{ii}w_{kk}+c^{ii}_{kk}w_{ii}+ \tilde{c}_{kk}w_{kk}||_2^{(\epsilon)}\leq C(M_{j,m}, V)\delta^2(t)\\
		&||\delta^2(t)d+ \epsilon\delta(t)e+\epsilon^2 f||_2^{(\epsilon)}\leq C(V)\delta^2(t),\\
		&||\epsilon\overline{f}||_2^{(\epsilon)}\leq \epsilon C(V).
	\end{align*}
	By (\ref{wrbound}), for $t\in [t_0, t_0+kT]$,
	\begin{align*}
		&\int_M \langle g^{\alpha\alpha}\overline{\nabla}_{\alpha}\overline{\nabla}_{\alpha}w, w\rangle +\int_M 2\overline{R}^{i\mbox{ }k}_{\mbox{ }i\mbox{ }k} w_{ii}w_{kk}\\
		&\leq \int_M \langle \dot{\Delta} w ,w \rangle_0+ 2\dot{R}^{i\mbox{ }k}_{\mbox{ }i\mbox{ }k} w_{ii}w_{kk}+ C(M_{j,m}, V)\delta^3(t).
	\end{align*}

	Thus, by definition of $\lambda$, we have
	\begin{align*}
		\partial_t (||w||_2^{(\epsilon)})^2&= \int_M \langle \partial_t w, w \rangle\\
		& \leq \int_M \langle  g^{\alpha\alpha}\overline{\nabla}_{\alpha}\overline{\nabla}_{\alpha}w, w\rangle +\int_M 2\overline{R}^{i\mbox{ }k}_{\mbox{ }i\mbox{ }k} w_{ii}w_{kk}+ C(M_{j,m}, V)\delta^3(t)+ \int_M \langle \epsilon\overline{f},w \rangle,\\
		&\leq \int_M \langle \Delta_0 w ,w \rangle_0+ 2\dot{R}^{i\mbox{ }k}_{\mbox{ }i\mbox{ }k} w_{ii}w_{kk}+ C(M_{j,m}, V)\delta^3(t)+\epsilon V M_{1,0} \delta(t),\\
		&\leq \int_M -|\dot{\nabla}w|^2_0+ 2\dot{R}^{i\mbox{ }k}_{\mbox{ }i\mbox{ }k} w_{ii}w_{kk}+ C(M_{j,m}, V)\delta^3(t)+\epsilon V M_{1,0} \delta(t),\\
		&\leq 2\lambda(||w||_2^{(\epsilon)})^2+ C(M_{j,m},V)\delta^3(t)+\epsilon V M_{1,0} \delta(t).
	\end{align*}
	The uniform bounds on the norms of $\overline{\nabla}w_{t^{p}}$ mean, by the heat estimates of \cite[Chapter 7, Thm 6]{Evans}, there is a constant $C_0$ such that
	\begin{align*}
		||w||_2^{(\epsilon)}(t)+||\overline{\nabla}w||_2^{(\epsilon)}(t) &\leq C_0(||F||_{L^2([t_0, t_0+(k-1)T])}^{(\epsilon)}+ ||w||_{H^1}^{(\epsilon)}(t_0+(k-1)T))\\
		&\leq C_0(4VT^{\frac{1}{2}}+M_{1,0}+M_{2,0}))\delta(t).
	\end{align*}
	
	This bound implies that on $t\in [t_0+(k-1)T, t_0+kT]$ we have,
	\begin{align*}
		&||\delta_{rk}((\overline{b}^{ll}\overline{\nabla}_rw_{ll})^2+\overline{c}^{ll}_r\overline{\nabla}_r w_{ll})+b^{ii}w_{ii}w_{kk}+c^{ii}_{kk}w_{ii}+ \tilde{c}_{kk}w_{kk}||_2^{(\epsilon)}\leq C(M_{j,m}, V)\delta^2(t),\\
		&||\delta^2(t)d+ \epsilon\delta(t)e+\epsilon^2 f||_2^{(\epsilon)}\leq C(V)\delta^2(t),\\
		&||\epsilon\overline{f}||_2^{(\epsilon)}\leq \epsilon C(V).
	\end{align*}
	
	So
	$$\partial_t (||w||_2^{(\epsilon)})^2\leq 2\lambda (||w||_2^{(\epsilon)})^2+ C(M_{j,m}, V)\delta^3(t)+\epsilon C_0(4V+M_{1,0}+M_{2,0})V\delta(t).$$
	
	Since $w(t_0)=0$, 	Gronwall's inequality implies that
	\begin{align*}
		(||w||_2^{(\epsilon)})^2 &\leq \delta^2(t)\left(\int_{t_0}^{t}C(M_{j,m},V)\delta^3(s)\delta^{-2}(s) ds+ \int_{t_0}^{t}\epsilon C_0(4V+M_{1,0}+M_{2,0})V\delta(s)\delta^{-2}(s) ds\right)\\
		&\leq C(M_{j,m}, V, \lambda)\delta^3(t)+\frac{\epsilon C_0}{\lambda}(4V+M_{1,0}+M_{2,0})V \delta^2(t)\delta^{-1}(t_0)\\
		&\leq C(M_{j,m}, V, \lambda)\delta^3(t)+\frac{C_0}{\lambda}(4V+M_{1,0}+M_{2,0})V \delta^2(t).
	\end{align*}
	with the last inequality due to $\epsilon= e^{\lambda t_0}.$ This proves the bound (\ref{Mbound}) for $w$.
	
	$\partial_{t}^p w$ for $p\leq 7$: The time-evolution equation for $\partial_{t}^p w$ is:
	\begin{align*}
		\partial_t (\partial_{t}^p w)_{kk}&= \overline{\Delta} (\partial_{t}^p w)+ \partial_t^{p}((w+\epsilon h)*\overline{\nabla}^2 w)+
		2\overline{R}^{i\mbox{ }k}_{\mbox{ }i\mbox{ }k} \partial_{t^{p}}(w_{ii})\\
		&\hspace{5mm}+ \partial_{t^{p}}(\delta_{rk}((\overline{b}^{ll}\overline{\nabla}_rw_{ll})^2+\overline{c}^{ll}_r\overline{\nabla}_r w_{ll})
		+b^{ii}w_{ii}w_{kk}+c^{ii}_{kk}w_{ii}+ \tilde{c}_{kk}w_{kk}) +\partial_t^{p}F.
	\end{align*}
	
	Using the bounds stated at the start of Theorem \ref{l^2long}, we have the following bounds on $t\in [t_0, t_0+(k-1)T]$,
	\begin{align*}
		&||\partial_t^{p}((w+\epsilon h)*\overline{\nabla}^2 w)||_2^{(\epsilon)} \leq C(M_{j,m}, V)\delta^2(t),\\
		&||\partial_t^{p}(\delta_{rk}((\overline{b}^{ll}\overline{\nabla}_rw_{ll})^2+\overline{c}^{ll}_r\overline{\nabla}_r w_{ll})+b^{ii}w_{ii}w_{kk}+c^{ii}_{kk}w_{ii}+ \tilde{c}_{kk}w_{kk})||_2^{(\epsilon)}\leq C(M_{j,m}, V)\delta^2(t),\\
		&||\partial_t^{p}(\delta^2(t)d+ \epsilon\delta(t)e+\epsilon^2 f)||_2^{(\epsilon)}\leq C(V)\delta^2(t).
	\end{align*}
	
	Thus,
	\begin{align*}
		\partial_t (||w_t^{p}||_2^{(\epsilon)})^2&
		= 2\int_M \langle\partial_t(w_t^{p}), \partial_{t}^p w \rangle_\epsilon \\
		&\leq \int_M \langle \dot{\Delta} \partial_{t}^p w ,\partial_{t}^p w \rangle_0+ 2\dot{R}^{i\mbox{ }k}_{\mbox{ }i\mbox{ }k} (\partial_{t}^p w)_{ii}(\partial_{t}^p w)_{kk}+ C(M_{j,m}, V)\delta^3(t)+\epsilon V M_{1,0} \delta(t),\\
		&\leq \int_{M} -|\dot{\nabla}(\partial_{t}^p w)|^2+  2\dot{R}^{i\mbox{ }k}_{\mbox{ }i\mbox{ }k} (\partial_{t}^p w)_{ii}(\partial_{t}^p w)_{kk}+ C(M_{j,m}, V)\delta^3(t)+\epsilon C(V, \lambda)M_{0,p}\delta(t),\\
		&\leq 2\lambda(||w_{t^{p}}||_2^{(\epsilon)})^2+ C(M_j, V)\delta^3(t)+\epsilon C(V, \lambda)M_{0,p}\delta(t).
	\end{align*}
	
	The heat estimates of \cite[Chapter 7, Thm 6]{Evans} give that on $t\in[t_0+(k-1)T, t_0+kT]$,
	$$||\partial_{t}^p w||_2^{(\epsilon)}(t)+ ||\overline{\nabla}(\partial_{t}^p w)||_2^{(\epsilon)}(t)\leq C_0(C(V)T+M_{0,p}+M_{1,p})\delta(t).$$
	
	Consequently, on $t\in[t_0+(k-1)T, t_0+kT]$, 
	$$\partial_t (||w_{t^{p}}||_2^{(\epsilon)})^2
	\leq 2\lambda(||\partial_{t}^p w||_2^{(\epsilon)})^2+ C(M_j, V)\delta^3(t)+\epsilon C(V, \lambda)(1+M_{0,p}+M_{1,p})\delta(t).$$
	
	Gronwall's inequality implies that
	\begin{align*}
		(||w||_2^{(\epsilon)})^2 &\leq \delta^2(t)\left(\int_{t_0}^{t}C(M_j,V)\delta^3(s)\delta^{-2}(s) ds+ \int_{t_0}^{t}\epsilon C(V, \lambda)(1+M_{0,p}+M_{1,p})\delta(s)\delta^{-2}(s) ds\right)\\
		&\leq C(M_j, V, \lambda)\delta^3(t)+\frac{\epsilon C(V, \lambda)(1+M_{0,p}+M_{1,p})}{\lambda} \delta^2(t)\delta^{-1}(t_0)\\
		&\leq C(M_j, V, \lambda)\delta^3(t)+\frac{C(V, \lambda)(1+M_{0,p}+M_{1,p})}{\lambda} \delta^2(t),
	\end{align*}
	with the last inequality due to $\epsilon= e^{\lambda t_0}.$	
	
	$\overline{\nabla}w:$ Using Young's inequality, on $t\in[t_0, t_0+(k-1)T]$, we have the following bounds
	\begin{align*}
		\int_M \langle \delta^2(t)d+\epsilon\delta(t)e+\epsilon^2 f+\epsilon \overline{f}, \partial_{t} w \rangle &\leq 
		\frac{1}{4}(||\partial_{t} w||_{2}^{(\epsilon)})^2+ (||\delta^2(t)d+\epsilon\delta(t)e+\epsilon^2 f+\epsilon \overline{f}||_2^{(\epsilon)})^2\\
		&\leq \frac{1}{4}(||\partial_{t} w||_{2}^{(\epsilon)})^2+ C(V)\delta^2(t).
	\end{align*}
	Similarly,
	\begin{align*}
		\int_M \langle \delta_{rk}((\overline{b}^{ll}\overline{\nabla}_rw_{ll})^2+\overline{c}^{ll}_r\overline{\nabla}_r w_{ll})+b^{ii}w_{ii}w_{kk}+c^{ii}_{kk}w_{ii}+ \tilde{c}_{kk}w_{kk}, w_t \rangle \leq \frac{1}{4}(||\partial_{t} w||_{2}^{(\epsilon)})^2+ C(V)\delta^4(t).
	\end{align*}
	Thus, for $t\in[t_0, t_0+(k-1)T]$, 
	\begin{align*}
		||\partial_{t} w||_2^{(\epsilon)}(t)&= \int_M \langle \partial_{t} w, \partial_{t} w \rangle_\epsilon,\\
		&=\int_M \langle g^{\alpha\alpha}\overline{\nabla}_\alpha\overline{\nabla}_\alpha w, \partial_{t} w \rangle_\epsilon+2\int_M  2\overline{R}^{i\mbox{ }k}_{\mbox{ }i\mbox{ }k} (\partial_{t} w)_{ii}(\partial_{t} w)_{kk} + C(M_{j,m}, V)\delta^4(t)\\
		&\hspace{4mm}+\frac{1}{2}(||\partial_{t} w||_2^{(\epsilon)})^2(t)+ C(V,\lambda)\delta^2(t).
	\end{align*}
	Using (\ref{wrbound}),
	$$ \langle g^{\alpha\alpha}\overline{\nabla}_\alpha\overline{\nabla}_\alpha w, \partial_{t} w \rangle_\epsilon \leq   \int_M \langle \overline{\Delta} w, \partial_{t} w \rangle_\epsilon+ C(M_{j,m})\delta^3(t),$$ and 
	\begin{align*}
		\int_{t_0}^{t_0+(k-1)T} \int_M 2\overline{R}^{i\mbox{ }k}_{\mbox{ }i\mbox{ }k} w_{ii}(\partial_{t} w)_{kk}
		&\leq \int_{t_0}^{t_0+(k-1)T} \frac{1}{4}(||\partial_{t} w||_2^{(\epsilon)})^2+ C(||w||_{2}^{(\epsilon)})^2\\
		&\leq \frac{1}{4}(||\partial_{t} w||_2^{(\epsilon)})^2 +CM_{1,0}^2 \delta^2(t),
	\end{align*}
	we have
	\begin{align*}
		||\partial_{t} w||_2^{(\epsilon)}(t)
		&\leq -\int_M \langle \overline{\nabla}  w, \overline{\nabla}\partial_{t} w \rangle_\epsilon+\int_M 2\overline{R}^{i\mbox{ }k}_{\mbox{ }i\mbox{ }k} w_{ii}(\partial_{t} w)_{kk} +C(M_{j,m}, V)\delta^3(t)+ C(M_{j,m}, V)\delta^4(t)\\
		&\hspace{4mm}+\frac{1}{2}(||\partial_{t} w||_2^{(\epsilon)})^2(t)+ C(V,\lambda)\delta^2(t).
	\end{align*}
	Integrating from $t=t_0$ to $t=t_0+(k-1)T$:
	\begin{align*}
		||\partial_{t} w||_{L^2([t_0, t_0+(k-1)T];L^2)}+4C||\overline{\nabla}w||(t_0+(k-1)T) &\leq CM_{1,0}^2\delta^2(t_0+(k-1)T)+C(M_{j,m})\delta^4(t)\\
		& \hspace{4mm}+C(V, \lambda)\delta^2(t).
	\end{align*}
	The heat estimates of \cite[Chapter 7, Thm 6]{Evans} imply that
	\begin{align*}
		||w||_2^{(\epsilon)}(t)+||\overline{\nabla}w||_2^{(\epsilon)}(t) &\leq C_0(||F||_{L^2([t_0, t_0+(k-1)T])}^{(\epsilon)}+ ||w||_{H^1}^{(\epsilon)}(t_0+(k-1)T))\\
		&\leq C_0(4VT^{\frac{1}{2}}+M_{1,0}+M_{2,0}))\delta(t). 
	\end{align*}
	for $t\in [t_0+ (k-1)T, t_0+kT]$
	
	Integrating from $t=t_0+(k-1)T$ to $t=t$ gives
	\begin{align*}
		||\partial_{t} w||_{L^2([ t_0+(k-1)T, t];L^2)}+2C||\overline{\nabla}w||(t) &\leq
		4C||\overline{\nabla}w||(t_0+(k-1)T)\\
		&\hspace{4mm}+ \int_{t_0+(k-1)T}^{t}C(M_{1,0}, M_{2,0}, \lambda, V)\delta^2(s) ds\\
		&\leq 4||\overline{\nabla}w||(t_0+(k-1)T)+ TC(M_{1,0}, M_{2,0}, \lambda, V)\delta^2(t)\\
		&\leq CM_{1,0}^2\delta^2(t)+C(M_{j,m})\delta^4(t)+C(V, \lambda)\delta^2(t)\\
		& \hspace{4mm} +TC(M_{1,0}, M_{2,0}, \lambda, V)\delta^2(t),
	\end{align*}
	for $t\in[t_0, t_0+kT].$
	Choose $T$ small enough so that $TC(M_{1,0}, M_{2,0}, \lambda, V)\leq 1$. Then we have the bound on $\overline{w}$.
	
	Let $i\geq 1$, $p\leq 7$. Time-evolution equation for $\overline{\nabla}^i w_{t^p}$ is,
	\begin{align*}
		\partial_t (\overline{\nabla}^i \partial_{t}^p w)&= g^{\alpha\alpha}\overline{\nabla}_{\alpha}\overline{\nabla}_\alpha (\overline{\nabla}^i \partial_{t}^p w)+ \sum_{m+j=i, q\leq p} \overline{\nabla}R * \overline{\nabla}^m \partial_{t}^q w + \overline{\nabla}^{i-1}\partial_t^{p}(\overline{R}^{i\mbox{ }k}_{\mbox{ }i\mbox{ }k} w_{ii})\\
		&\hspace{4mm}+\overline{\nabla}^{i-1}\partial_t^{p}(\delta_{rk}((\overline{b}^{ll}\overline{\nabla}_rw_{ll})^2+\overline{c}^{ll}_r\overline{\nabla}_r w_{ll})+b^{ii}w_{ii}w_{kk}+c^{ii}_{kk}w_{ii}+ \tilde{c}_{kk}w_{kk})+F).
	\end{align*}
	For $t\in [t_0, t_0+(k-1)T]$, we have
	\begin{align*}
		&\left|\left|\partial_t^p\overline{\nabla}^{i-1} F\right|\right|_2^{(\epsilon)} \leq C(V)\delta(t),\\
		&\left|\left|\partial_t^p\overline{\nabla}^{i-1} (\delta_{rk}((\overline{b}^{ll}\overline{\nabla}_rw_{ll})^2+\overline{c}^{ll}_r\overline{\nabla}_r w_{ll})+b^{ii}w_{ii}w_{kk}+c^{ii}_{kk}w_{ii}+ \tilde{c}_{kk}w_{kk}))\right|\right|_2^{(\epsilon)}\leq C(V)\delta^2(t),\\
		&\left(||\sum_{m+j=i, q\leq p} \overline{\nabla}R * \overline{\nabla}^m \partial_{t}^q w=||_2^{(\epsilon)}\right)^2 \leq Q(M_{j,m;j<i, m\leq p})\delta^2(t).
	\end{align*}
	Thus, for $t\in [t_0, t_0+(k-1)T]$, 
	\begin{align*}
		\partial_t(||\overline{\nabla}^{i-1}w_{t^{p}}||_{2}^{(\epsilon)})^2
		&= \int_M \langle \partial_t \overline{\nabla}^{i-1}\partial_{t}^p w, \partial_t \overline{\nabla}^{i-1}\partial_{t}^p w \rangle\\
		&\leq \int_M \langle g^{\alpha\alpha} \overline{\nabla}_\alpha \overline{\nabla}_\alpha \overline{\nabla}^{i-1}\partial_{t}^p w, \partial_t \overline{\nabla}^{i-1}\partial_{t}^p w \rangle+
		\frac{3}{4}(||\overline{\nabla}^{i-1}\partial_{t}^p w||_2^{(\epsilon)})^2\\
		&
		\hspace{4mm}+Q(M_{j,m;j<i,m\leq p})\delta^2(t)+ C(M_j)\delta^4(t)+C(V)\delta^2(t)\\
		&\leq -\partial_t (||\overline{\nabla}^{i}\partial_{t}^p w||_2^{(\epsilon)})^2+ \frac{3}{4}(||\overline{\nabla}^{i-1}\partial_{t}^p w||_2^{(\epsilon)})^2+ Q(M_{j,m;j<i,m\leq p})\delta^2(t)\\
		&\hspace{4mm}+ C(M_{j,m})\delta^4(t)+C(V)\delta^2(t).
	\end{align*}
	Integrating for $t=t_0$ to $t=t_0+(k-1)T$:
	\begin{align*}
		\int_{t_0}^{t_0+(k-1)T}\frac{1}{4}(|| \overline{\nabla}^{i-1}\partial_{t}^p w||_{2}^{(\epsilon)})^2(s)ds + C\partial_t (||\overline{\nabla}^{i}\partial_{t}^p w||_2^{(\epsilon)})^2(t)&\leq  Q(M_{j,m;j<i,m \leq p}, \lambda)\delta^2(t_0+(k-1)T)\\
		&\hspace{4mm}+ C(M_{j,m}, \lambda)\delta^4(t_0+(k-1)T)\\
		&\hspace{4mm}+C(V, \lambda)\delta^2(t_0+(k-1)T).
	\end{align*}
	The heat estimates of \cite[Chapter 7, Thm 6]{Evans} for $t\in [t_0+(k-1)T, t_0+kT]$ gives
	\begin{align*}
		||\overline{\nabla}^i \partial_{t}^p w||_2^{(\epsilon)}(t)+ (||\overline{\nabla}^{i-1} \partial_{t}^p w||_2^{(\epsilon)})^2(t)\leq C(1 + \sum_{j\leq i, m\leq p}M_{j,m})\delta^2(t).
	\end{align*}
	Repeating the estimate of $(||\overline{\nabla}^i \partial_{t}^p w||_2^{(\epsilon)})^2(t)$ for $t\in [t_0+(k-1)T, t_0+kT]$ and integrating from $t=t_0+(k-1)T$ to $t=t$ yields
	\begin{align*}
		\int_{t_0+(k-1)T}^{t}\frac{1}{4}(|| \overline{\nabla}^{i-1}\partial_{t}^p w||_{2}^{(\epsilon)})^2 + \partial_t (||\overline{\nabla}^{i}\partial_{t}^p w||_2^{(\epsilon)})^2(t)&\leq C\partial_t (||\overline{\nabla}^{i}\partial_{t}^p w||_2^{(\epsilon)})^2(t_0+(k-1)T)+\\
		& \hspace{4mm} TC(M_{j,m})\delta^2(t)\\
		&\leq Q(M_{j,m};j<i,m\leq p), \lambda)\delta^2(t)\\
		&\hspace{4mm}+ C(M_{j,m}, \lambda)\delta^4(t)+C(V, \lambda)\delta^2(t)\\
		&\hspace{4mm}+ TC(M_{j,m})\delta^2(t).
	\end{align*}
	Choose $T$ small enough so so that $TC(M_{j,m})\leq 1.$ Then for $N$ negative enough, we have 
	$$\partial_t (||\overline{\nabla}^{i}\partial_{t}^p w||_2^{(\epsilon)})^2(t) \leq Q'(M_{j,m};j<i,m\leq p, V,\lambda)\delta^2(t).$$
	\end{proof}			

\begin{remark}\label{remark}
	From the proof of Theorem \ref{l^2long}, it is clear that there was nothing special about only having $L^2$ bounds of $$\left|\left|\frac{d^j}{dt^j} \overline{\nabla}^iw\right|\right|_2^{\epsilon}(t) \leq M_{i+j}\delta(t),$$ for $i+j\leq 7$. An inductive proof could also be used to prove the same result but for $i+j\leq A$ for any $A\geq 7$. 
\end{remark}

\subsection{Proof of theorem \ref{mainthm}}\label{proofofmain}

\begin{proposition}
	If $\epsilon<\epsilon'$ is sufficiently small, there exists $N<0$ such that the Ricci-de Turck flow 
	\begin{align*}
		&\partial_{t} g(t)= -2\text{Ric}(g(t))+ \mathcal{L}_{V(g(t),g_{0})}g(t), \text{ }g(0)=g_{0},\\
		&g(0)=g_0+ \epsilon h
	\end{align*}
	where $V(g(t),g_{0})$ is a time dependent vector field satisfying
	\begin{equation*}
		g(V(g(t),g_{0})(t), \cdot)= -\text{tr}_{g(t)}\nabla^{g(t)} g_{0}-\frac{1}{2}\nabla^{g(t)} \text{tr}_{g(t)}g_{0},
	\end{equation*}
	with $g(t_0)=g_0+\epsilon h$ exists for $[t_0, N]$, where $\epsilon=e^{\lambda t_0}$.
\end{proposition}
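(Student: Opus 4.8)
The plan is to run a continuity (bootstrap) argument on the tensor $w(t)=g(t)-g_0-\delta(t)h$, advancing the Ricci--de Turck solution backward in time in steps of a fixed length $T>0$, with Theorem \ref{l^2long} serving as the inductive engine. First I would fix, once and for all, a step length $T>0$, constants $M_{i,j}>0$ for $i+j\le 7$, a curvature bound $K>0$ and a threshold $N<0$, all chosen small enough (respectively, negative enough) that the hypotheses of Theorems \ref{shorttimeests} and \ref{l^2long}, Propositions \ref{boundsonh} and \ref{lastC^m}, and Lemmas \ref{longtimeests}, \ref{shortC^mests} and \ref{sobelevineq} hold simultaneously; this is possible precisely because each of those statements is uniform in the base time and in the number of steps. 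Given $\epsilon=e^{\lambda t_0}<\epsilon'$, shrinking $\epsilon'$ if necessary forces $t_0<N$. By Proposition \ref{boundsonh} the initial metric $g_0+\epsilon h$ satisfies $|Rm(g_0+\epsilon h)|\le K$, so Theorem \ref{shorttimeests} produces a unique complete Ricci--de Turck solution $g(t)$ on $[t_0,t_0+T]$ with $g(t_0)=g_0+\epsilon h$; note $w(t_0)=0$, and the decay estimate (\ref{wrbound}) together with Lemma \ref{vdecay} and Proposition \ref{boundsonh} guarantees that all the weighted norms $\lVert \partial_t^j\overline{\nabla}^i w\rVert_2^{(\epsilon)}(t)$ are finite, so Theorem \ref{l^2long} may legitimately be invoked on each subinterval produced below.

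I would then prove by induction on $k\ge 1$ the statement: \emph{if $t_0+kT<N$, then $g$ exists on $[t_0,t_0+kT]$ and $\lVert \partial_t^j\overline{\nabla}^i w\rVert_2^{(\epsilon)}(t)\le M_{i,j}\delta(t)$ for all $i+j\le 7$ and $t\in[t_0,t_0+kT]$.} For $k=1$, existence on $[t_0,t_0+T]$ is already in hand, and the required bounds hold at the single point $t_0$ because $w(t_0)=0$ forces $\partial_t^j\overline{\nabla}^i w$ there to be $O(\delta(t_0))$ (the only source at $t_0$ being the inhomogeneity $F$, which is $O(\delta)$), hence dominated by $M_{i,j}\delta(t_0)$ once the $M_{i,j}$ are large; Theorem \ref{l^2long} (with $k=1$) then upgrades them to all of $[t_0,t_0+T]$. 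For the step $k\to k+1$, assume $t_0+(k+1)T<N$. By the inductive hypothesis $g$ exists on $[t_0,t_0+kT]$ with the $H^7$ bounds, so Lemma \ref{longtimeests} (or the $C^m$ estimates of Proposition \ref{lastC^m}) yields $|\overline{\nabla}^i w|_\epsilon\le C_i$ on $[t_0,t_0+kT]$ with the $C_i$ as small as desired by taking $N$ more negative; in particular $\lVert w\rVert_{C^3}^{(\epsilon)}$ is small on $[t_0,t_0+kT]$, so Lemma \ref{shortC^mests} extends $g$ for a further time $T$, i.e.\ to $[t_0,t_0+(k+1)T]$. Now Theorem \ref{l^2long} applied with $k+1$ --- its hypotheses hold, since existence on $[t_0,t_0+(k+1)T]$ has just been established, the $H^7$ bounds on $[t_0,t_0+kT]$ hold by the inductive hypothesis, and $t_0+(k+1)T<N$ --- returns the $H^7$ bounds on all of $[t_0,t_0+(k+1)T]$, closing the induction.

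To finish, let $k_0$ be the largest integer with $t_0+k_0T<N$; since $t_0<N$ and $T$ is fixed, $k_0\ge 1$ and $N-T\le t_0+k_0T<N$. The induction provides existence of $g$ on $[t_0,t_0+k_0T]$ together with uniform $C^3$ bounds on $w$ (Lemma \ref{longtimeests}), so one further application of Lemma \ref{shortC^mests} (or directly of Theorem \ref{shorttimeests}) extends $g$ to $[t_0,t_0+(k_0+1)T]\supseteq[t_0,N]$, which is exactly the assertion of the proposition.

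The delicate point is not any single estimate but the uniformity of the induction: the step $T$, the curvature bound $K$ of $g$ at the breakpoints $t_0+(k-1)T$, the constants $M_{i,j}$ and the threshold $N$ must all be chosen independently of $k$ and of $t_0$, so that the inductive step genuinely reproduces its own hypotheses rather than degrading them. This is exactly what the preceding results are built to deliver: Theorem \ref{l^2long} reproduces the $H^7$ bounds with the \emph{same} $M_{i,j}$; Lemma \ref{longtimeests} and Proposition \ref{lastC^m} convert those bounds into $C^4$ (hence curvature) bounds that tend to $0$ as $N\to-\infty$; and Theorem \ref{shorttimeests} then returns a lower bound on the existence time depending only on $K$. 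With these in hand the proposition reduces to the bookkeeping above, the one recurring subtlety being the propagation of finiteness of the weighted norms of $w$ onto each new subinterval, which is guaranteed by (\ref{wrbound}) and Lemma \ref{vdecay}.
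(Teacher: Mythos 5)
Your proposal is correct and follows essentially the same route as the paper: an induction in steps of fixed length $T$, using Theorem \ref{l^2long} to propagate the $H^7$ bounds, the Sobolev inequality of Lemma \ref{sobelevineq} to convert them into $C^3$ (hence curvature) control at the right endpoint, and Theorem \ref{shorttimeests} to extend the solution by another time $T$ until $N$ is reached. Your write-up is in fact slightly more careful than the paper's in making the $H^7$ bounds an explicit part of the inductive hypothesis and in checking the base case, but this is a refinement of the same argument rather than a different one.
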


\begin{proof}
	Let $N$ be negative enough. Take $t_0$ negative enough so that $|Rm(g_0+\epsilon h)|_{g_0+\epsilon h} \leq \frac{K}{2}$ for some $K>0$. Then, by Theorem \ref{shorttimeests}, there exists $T=T(K)>0$ such that 
	the above equation has a solution $g(t)$ for $t\in[t_0, t_0+T]$. Suppose $g(t)$ exists on $[t_0, t_0 +(k-1)T]$. 
	Thus, for $\epsilon\leq \epsilon'$ there is a $C(\epsilon')>0$ such that 
	$$||w||_{C^{3}}^{(0)} \leq C \delta(t),$$
	for all $t\in[t_0, t_0+kT]$.
	The Sobelev inequality of Lemma \ref{sobelevineq} applied to the conclusion of Theorem \ref{l^2long} yields
	\begin{equation}\label{C^3}
		||w||_{C^{3}}^{(\epsilon)} \leq C(M_{j,m}) \delta(t),
	\end{equation}
	for all $t\in[t_0, t_0+(k-1)T]$.
	
	Then, by the equivalence of $\overline{g}$ and $g(t)$,  we have $|| w||^\epsilon_{C^3} \leq C\delta(t)$. If $N$ is negative enough to make $C\delta(t)$ is small enough so that 
	$$|Rm(g(t_0+(k-1)T))|_{g(t_0+(k-1)T)} \leq K,$$
	then, by Theorem \ref{shorttimeests}, $g(t)$ can be extended to a solution on $t\in[t_0, t_0+kT]$.
\end{proof}

\begin{theorem}\label{anicentRDTF}
	There exists a non-trivial ancient solution to the Ricci-de Turck flow coming out of $g_0$.
\end{theorem}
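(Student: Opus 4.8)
The plan is to obtain the ancient Ricci-de Turck flow as a subsequential limit of the flows produced by the preceding proposition, and to detect its non-triviality by following the $L^2$-projection of $g^n-g_0$ onto the unstable eigenmode $h$. First I fix a sequence $t_n\to-\infty$, set $\epsilon_n=e^{\lambda t_n}$ (so $\epsilon_n\to 0$ and $\epsilon_n<\epsilon'$ for $n$ large), and let $g^n(t)$ be the Ricci-de Turck flow with $g^n(t_n)=g_0+\epsilon_n h$; by the preceding proposition this exists on $[t_n,N]$ with $N<0$ independent of $n$. Writing $w^n(t)=g^n(t)-g_0-\delta(t)h$, so that $w^n(t_n)=0$, Theorem \ref{l^2long} gives $\left\|\frac{d^j}{dt^j}\overline{\nabla}^i w^n\right\|_2^{(\epsilon_n)}(t)\le M_{i,j}\delta(t)$ for $i+j\le 7$ and $t\in[t_n,N]$, with all constants independent of $n$. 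Feeding these into the Sobolev inequality of Lemma \ref{sobelevineq} — which in dimension four costs three derivatives — and using the uniform equivalence of $|\cdot|_0$ and $|\cdot|_{\epsilon_n}$, I obtain a uniform bound $\sum_{i+j\le 4}\left|\partial_t^j\overline{\nabla}^i w^n\right|(x,t)\le C\delta(t)$ on $M\times[t_n,N]$, and hence (together with the analogous bound on $\delta(t)h$) a uniform $C^4$ bound on $g^n-g_0$ there.

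Next I apply Arzelà–Ascoli on an exhaustion of $M\times(-\infty,N]$ by compact sets, combined with a diagonal argument, to extract a subsequence converging in $C^3_{\mathrm{loc}}(M\times(-\infty,N])$ to a limit $g_\infty(t)$, $t\in(-\infty,N]$. Since the Ricci-de Turck equation is second order in space and first in time, $C^2_{\mathrm{loc}}$ convergence is enough to pass to the limit, so $g_\infty$ solves the Ricci-de Turck flow. The estimate $|g_\infty(t)-g_0|_0\le C\delta(t)\le Ce^{\lambda N}$, which is less than $\tfrac12$ once $|N|$ is large, makes $g_\infty(t)$ a complete Riemannian metric uniformly equivalent to $g_0$; and since $|g_\infty(t)-g_0|_0\to 0$ as $t\to-\infty$, $g_\infty$ is an ancient solution coming out of $g_0$.

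It remains to show $g_\infty$ is non-trivial. I would study $\phi^n(t)=\int_M\langle g^n(t)-g_0,h\rangle_{g_0}\,dV_{g_0}$. Writing $g^n=g_0+u$, the Ricci-de Turck flow reads $\partial_t u=\Delta^L_{g_0}u+Q(u)$, where $Q(u)=\big(-2\mathrm{Ric}(g_0+u)+\mathcal{L}_{V(g_0+u,g_0)}(g_0+u)\big)-\Delta^L_{g_0}u$ vanishes to second order in $u$; by the $C^2$-bounds above, the rapid spatial decay of $w^n$ from (\ref{wrbound}), and the exponential decay of $h$, one gets $\|Q(u)\|_2\le C\delta(t)^2$ uniformly in $n$. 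Using the self-adjointness of $\Delta^L_{g_0}$ on this class of rapidly decaying symmetric $2$-tensors (the boundary terms vanish exactly as in (\ref{because})) together with $\Delta^L_{g_0}h=\lambda h$, one finds
\[
(\phi^n)'(t)=\lambda\,\phi^n(t)+\int_M\langle Q(u),h\rangle_{g_0}\,dV_{g_0},\qquad \left|\int_M\langle Q(u),h\rangle_{g_0}\,dV_{g_0}\right|\le C\delta(t)^2 .
\]
Since $\phi^n(t_n)=\langle\epsilon_n h,h\rangle_{L^2}=\epsilon_n\|h\|_2^2=\delta(t_n)$ (using $\|h\|_2=1$ and $\epsilon_n=e^{\lambda t_n}$), integrating this ODE yields $\phi^n(t)=\delta(t)+O(\delta(t)^2)$ uniformly in $n$; thus, after enlarging $|N|$, $\phi^n(t)\ge\tfrac12\delta(t)>0$ on $[t_n,N]$. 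Dominated convergence — the integrands are bounded by $C\delta(t)$ times a fixed rapidly decaying function, independently of $n$ — then gives $\langle g_\infty(t)-g_0,h\rangle_{L^2}\ge\tfrac12\delta(t)>0$, so $g_\infty(t)\ne g_0$ for every $t\le N$ and the ancient solution is non-trivial.

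The step requiring the most care is this last one. All the analytic machinery controlling $w^n$ is already contained in Proposition \ref{lastC^m} and Theorem \ref{l^2long}, but the bound $\|w^n\|_2\le M_{0,0}\delta(t)$ by itself does not rule out $g_\infty=g_0$, since $M_{0,0}$ need not be small; what saves the argument is that the $h$-component of $g^n-g_0$ has leading term exactly $\delta(t)$, which is pinned down by the choice $\epsilon_n=e^{\lambda t_n}$ and the eigenvalue equation $\Delta^L_{g_0}h=\lambda h$, and this survives the passage to the limit.
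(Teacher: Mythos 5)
Your compactness step --- uniform $H^7$ bounds from Theorem \ref{l^2long}, converted to uniform $C^4$ bounds via Lemma \ref{sobelevineq} and the uniform equivalence of the metrics, then Arzel\`a--Ascoli with a diagonal argument over a compact exhaustion of $M\times(-\infty,N]$ --- is exactly the paper's argument, and your passage to the limit and the convergence $g_\infty(t)\to g_0$ as $t\to-\infty$ match as well. Where you genuinely diverge is the non-triviality step. The paper works pointwise: it observes that $w^n=g^n-g_0-\delta(t)h$ solves $\partial_t w^n=L(w^n)+O(\delta^2(t))$ with $w^n(t_n)=0$ and invokes the parametrix representation of \cite{Friedman} to conclude $|w^n|\le C\delta^2(t)$ uniformly in $n$, so that $g^n(t)=g_0+\delta(t)h+O(\delta^2(t))$ survives to the limit. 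You instead track the scalar $\phi^n(t)=\langle g^n(t)-g_0,h\rangle_{L^2}$, use the formal self-adjointness of $\Delta^L_{g_0}$ (justified by the decay of $w^n$ from (\ref{wrbound}) and of $h$ from Proposition \ref{boundsonh}, exactly as in (\ref{because})) together with the eigenvalue equation to obtain $(\phi^n)'=\lambda\phi^n+O(\delta^2)$, and integrate from $\phi^n(t_n)=\epsilon_n$. This buys you a softer argument: you need only $L^2$ control of the quadratic error, which follows directly from the $C^2$ and $H^2$ bounds already established, rather than pointwise heat-kernel estimates; and the lower bound $\phi^n(t)\ge\tfrac{1}{2}\delta(t)$ passes to the limit by dominated convergence since $|h|$ is integrable. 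The paper's route gives the stronger pointwise asymptotic $g(t)=g_0+\delta(t)h+O(\delta^2(t))$, while yours isolates precisely the unstable-mode component. Both are correct, and your closing observation is apt: the bound $\lVert w^n\rVert_2\le M_{0,0}\delta(t)$ alone does not preclude $g_\infty=g_0$, so some version of this leading-order identification is genuinely required.
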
			

\begin{proof}
	Pick a sequence $\epsilon_n \rightarrow 0$. Then, by the above Proposition, there exists a family of metrics $g^n(t)$ such that
	\begin{align*}
		&\partial_{t} g^n(t)= -2\text{Ric}(g^n(t))+ \mathcal{L}_{V(g^n(t),g_{0})}g^n(t), 
	\end{align*}
	where $V(g^n(t),g_{0})$ is a time dependent vector field satisfying
	\begin{equation*}
		g(V(g^n(t),g_{0})(t), \cdot)= -\text{tr}_{g^n(t)}\nabla^{g^n(t)} g_{0}-\frac{1}{2}\nabla^{g^n(t)} \text{tr}_{g^n(t)}g_{0},
	\end{equation*}
	with $g^n(t_n)=g_0+\epsilon_n h$ exists on $[t_n, N]$, where $\epsilon_n=e^{\lambda t_n}$.
	Let $M_i\subset M$ be a sequence of compact subsets such that $\cup_i M_i=M$. Set $P_i=D_i\times [t_0, N].$ By Theorem \ref{l^2long}, Lemma \ref{sobelevineq}, and the equivalence of $g_0$ and $g_0+\epsilon_n h$, 
	$$\left|\frac{d^j}{dt^j}\overline{\nabla}^i g^n \right|_0,$$
	for $i+j\leq 4$ is bounded uniformly on $P_n$ for all $n$. By Arzelà–Ascoli theorem and a diagonal argument, we can assume that
	$$g^n, \overline{\nabla} g^n, \overline{\nabla}^2 g^n, \partial_t g^n$$
	converge uniformly on $P_i$. Thus, the limit $$g^n \rightarrow g$$
	with respect to $|\cdot |_0$
	is a solution to the Ricci-de Turck flow on $M\times (-\infty, N].$ 
	
	Let $k\geq 0$.	We claim that $g(t)\rightarrow g_0$ in $C^k$ as $t\rightarrow -\infty$. Let $\epsilon>0$ be small. Choose $\overline{t}<N$ such that $\delta(\overline{t})\leq \epsilon.$ Let $t\leq \overline{t}$. Let $n$ be such that $t_n \leq t$ and, by Remark \ref{remark} and Lemma \ref{sobelevineq}, such that 
	$$||g-g^n(t)||^{(0)}_{C^k} \leq \epsilon.$$
	Then, by (\ref{C^3}), 
	\begin{align*}
		||g(t)-g_0||^{(0)}_{C^k} &\leq ||g(t)-g^n(t)||^{(0)}_{C^k}+||g_n(t)-g^n(t_n)||^{(0)}_{C^k}+||g^n(t_n)-g_0||^{(0)}_{C^k}\\
		& \leq \epsilon+ ||w^n(t)-(\delta(t)-\delta(t_n))h||^{(0)}_{C^k}+\delta(t_n)||h||_{C^k}^{(0)}\\
		&\leq C(\delta(t)+\epsilon)\\
		&\leq 2C\epsilon
	\end{align*}
	for $C$ independent of $n$. Therefore, $g(t)\rightarrow g_0$ in $C^k$ as $t\rightarrow -\infty$.

	We still have to show that $g(t)$ is not the trivial solution $g(t)=g_0$.
	By equation (\ref{weq}), $w^n= g^n(t)-g_0-\delta(t)h$ satisfies
	$$\partial_t w^n= L(w^n)+O(\delta^2(t)),$$
	for $L$ an elliptic linear operator with coefficients uniformly bounded in $n$. Since $w^n(t_n)=0$, parametric method of \cite{Friedman} implies that
	$$\left|w^n \right|_\epsilon\leq C\delta^2(t),$$
	for all $t\in [t_n, N]$ and $C>0$ independent of $n$. Hence,
	$$g^n(t)=g_0+\delta(t)h+O(\delta^2(t)),$$ and so 
	\begin{equation}\label{nondiffeo}
	g(t)=g_0+\delta(t)h+O(\delta^2(t)).
	\end{equation} 
	Therefore, $g(t)\not= g_0$ for all $t$.
\end{proof}

\begin{proof} (of Theorem \ref{mainthm})
	Theorem \ref{anicentRDTF} gives us a non-trivial ancient solution $g(t)$ to the Ricci-de Turck flow coming out of $g_0$. Since $|Rm(g(t))|$ is bounded uniformly on $M\times (-\infty, N]$, the de Turck diffeomorphisms can be solved on $M\times (-\infty, N]$. Pulling back by these diffeomorphisms yields a ancient solution to the Ricci flow such that for all $k\geq 0$, we have $g(t)\rightarrow g_0$ (modulo diffeomorphisms) in $C^k$ as $t \rightarrow -\infty$. This Ricci flow solution is non-trivial since $h$ is traceless and divergence-free and so, by (\ref{nondiffeo}), $g(t)$ is not just a diffeomorphism and rescaling of $g_0$. 
\end{proof}

	\bibliography{refs}
\bibliographystyle{amsplain}

\end{document}